\newtheorem{theorem}{Theorem}[section]
\newtheorem{proposition}[theorem]{Proposition}
\newtheorem{lemma}[theorem]{Lemma}
\newtheorem{corollary}[theorem]{Corollary}
\theoremstyle{definition}
\newtheorem{definition}[theorem]{Definition}
\newtheorem{assumption}[theorem]{Assumption}
\newtheorem{example}[theorem]{Example}
\theoremstyle{remark}
\newtheorem{remark}[theorem]{Remark}
\DeclareMathOperator*{\supp}{supp}
\DeclareMathOperator*{\TV}{TV}
\DeclareMathOperator*{\BV}{BV}
\DeclareMathOperator*{\BVV}{BV_V}
\DeclareMathOperator*{\dvg}{div}
\DeclareMathOperator*{\pred}{pred}
\DeclareMathOperator*{\ared}{ared}
\newcommand*\dd{\mathop{}\!\mathrm{d}}
\newcommand{\weakstarto}{\stackrel{\ast}{\rightharpoonup}}
\newcommand{\weakto}{\rightharpoonup}
\newcommand{\bdvg}[1]{\dvg{}_{#1}}
\newcommand{\mres}{\mathbin{\vrule height 1.6ex depth 0pt width 0.13ex\vrule height 0.13ex depth 0pt width 1.3ex}}
\newcommand{\R}{\mathbb{R}}
\newcommand{\N}{\mathbb{N}}
\newcommand{\Z}{\mathbb{Z}}
\newcommand{\Ha}{\mathcal{H}}
\title{On Integer Optimal Control
with Total Variation Regularization
on Multi-dimensional Domains}
\author{Paul Manns\thanks{Faculty of Mathematics,
		TU Dortmund University (\url{paul.manns@tu-dortmund.de}).}
	\and 
	Annika Schiemann\footnotemark[1] \thanks{Previous versions of this article were published under Annika Schiemann's former name Annika M\"uller.}}
\begin{document}
	\maketitle
	\begin{abstract}
		We consider optimal control problems with integer-valued controls and a
		total variation regularization penalty in the objective
		on domains of dimension
		two or higher. The penalty yields that the feasible set is sequentially closed in the weak-$^*$
		and closed in the strict topology in the space of functions of bounded variation.
		
		In turn, we derive first-order optimality conditions of the optimal control
		problem as well as trust-region subproblems with partially linearized model
		functions using local variations of the level sets of the
		feasible control functions. We also prove that a recently proposed function space
		trust-region algorithm---sequential linear integer programming---produces
		sequences of iterates  whose limits are first-order optimal points.
	\end{abstract}
	\noindent \textbf{Keywords:}
		Mixed-integer optimal control,
		first-order optimality conditions,
		trust-region methods

	\noindent \textbf{AMS subject classification:}
		49K30,49Q15,49M05,49M37
	
	\section{Introduction}
	
	Let $\alpha > 0$. Let $2 \le d \in \N$.
	Let $\Omega \subset \R^d$ be a bounded Lipschitz domain.
	We are concerned with 
	optimization problems of the form
	\begin{gather}\label{eq:p}
	\begin{aligned}
	\min_{v \in L^2(\Omega)}\ & J(v) \coloneqq F(v) + \alpha \TV(v) \\
	\text{s.t.}\ & v(x) \in V \coloneqq \{\nu_1,\ldots,\nu_M\} \subset \Z
	\text{ for almost all (a.a.)\ } x\in \Omega,
	\end{aligned}\tag{P}
	\end{gather}
	where $F : L^2(\Omega) \to \R$ is lower semicontinuous and 
	bounded below and $\TV : L^1(\Omega) \to [0,\infty]$ is the
	total variation seminorm.
	Specifically, we are interested in
	\emph{integer optimal control problems}, in which
	$F$ has the form $F = j \circ S$ for an objective function
	$j : Y \to \R$ and the (potentially non-linear) solution operator $S : L^2(\Omega) \to Y$
	of a PDE (or $S$ is another kind of integral operator) with state space $Y$.
	The distinctive feature of \eqref{eq:p} is the integrality constraint on the control $v$.
	
	Integer optimal control problems are a versatile mathematical problem class that
	allows for modeling many scenarios with real world applications, for example,
	transmission line control \cite{gottlich2019partial}, traffic light control 
	\cite{goettlich2017partial}, gas network control 
	\cite{martin2006mixed,hante2017challenges},
	and automotive control \cite{gerdts2005solving}.
	Owing to these applications, integer optimal control has attracted
	significant interest in recent years. We highlight
	the research on the \emph{combinatorial integral approximation}
	\cite{sager2011combinatorial,hante2013relaxation,sager2012integer,manns2020multidimensional,kirches2020compactness,hante2021time}, which is closely related to relaxation-based approaches in shape and topology optimization \cite{allaire2001shape,sigmund2013topology}
	and compensated compactness \cite{tartar1979compensated}.
	Recently, descent optimization algorithms that produce integer-valued controls directly 
	without solving the relaxation have also been investigated,
	see, for example, \cite{hahn2022binary,vogt2022mixed}. They are based
	on the same principles as the \emph{combinatorial integral approximation},
	which is shown in \cite{manns2022on}.
	The aforementioned results can provide optimal approximation
	guarantees on the computed integer-valued control functions,
	for the case $\alpha = 0$, where no regularization
	is present. In \cite{manns2021relaxed} it has been shown
	that the convex relaxation of a so-called multi-bang regularizer
	\cite{clason2014multi,clason2018vector} may
	be integrated into the \emph{combinatorial integral approximation}.
	
	However, without enforcing more regularity on the control input
	$v$ in \eqref{eq:p} these approaches lead to
	approximations of functions with convex codomains in weak topologies of
	$L^p$ spaces,
	see also the Lyapunov convexity theorem 
	\cite{lyapunov1940completely,lindenstrauss1966short}, which means that
	close approximations may exhibit highly oscillating functions with
	their total variation tending towards infinity 
	\cite{hante2013relaxation}. While the approximation algorithms that
	compute the integer controls have been tailored to reduce oscillations
	\cite{bestehorn2021mixed,sager2021mixed}
	this effect is inevitable.
	
	Therefore, it has been proposed to enforce control functions of bounded
	variation by choosing $\alpha > 0$ in
	\cite{leyffer2021sequential}.
	Driven by applications in mathematical 
	imaging, total variation regularization has been analyzed in depth, see
	\cite{rudin1992nonlinear,vogel1996iterative,dobson1996analysis,chambolle1997image,chambolle2010introduction,lellmann2014imaging,hintermuller2017optimal},
	and also entered research on optimal control problems, see
	\cite{loxton2012control,kaya2020optimal,engel2021optimal,hafemeyer2022path}.
	Combining $\alpha > 0$ with the integrality constraint $v(x) \in V$ a.e.,
	it follows that the feasible set of \eqref{eq:p} is sequentially weak-$^*$ closed
	in the space of functions of bounded variation.
	Thus \eqref{eq:p} admits a solution in this setting, see, e.g., the analysis in
	\cite[Chap.\ 4-5]{ambrosio2000functions}, \cite[Cor.\ 2.6]{burger2012exact}, or
	\cite[Prop.\ 2.3]{leyffer2021sequential}.
	
	Because of the sequential weak-$^*$ closedness of the feasible set,
	a novel trust-region algorithm
	that operates on the feasible set is proposed in  \cite{leyffer2021sequential}. 
	The model function of the trust-region subproblems
	is the sum of a linearization of $F$ and the term $\alpha \TV$, meaning
	that the latter is kept exactly. The trust region is an $L^1$-ball around the
	current iterate. After discretization, this leads to linear integer programs.
	Optimality conditions for \eqref{eq:p} and asymptotics of the trust-region 
	algorithm have been analyzed for the case $d = 1$, that is if $\Omega$ is
	an open interval. For this case, the trust-region subproblems can be solved
	efficiently with the strategies proposed in 
	\cite{severitt2022efficient,marko2022integer} and it has been noted 
	in \cite{leyffer2021sequential} that when the sequence of heights of the
	steps of the control function settles during the optimization procedure, 
	optimizing \eqref{eq:p} becomes a
	switching point optimization, for which optimization techniques
	have been investigated
	\cite{ruffler2016optimal,de2020sparse,maurer2004second,stellato2017second}.
	A proximal objective instead of a trust-region globalization
	has been proposed in \cite{marko2022integer}.
	
	None of these works has addressed the case that the domain is multi-dimensional, that is $d \ge 2$, where the geometric properties of the $\TV$-term differ
	fundamentally from $d = 1$, however. This work closes this gap.
	
	\paragraph{Contribution} We analyze optimality conditions
	for \eqref{eq:p} and the function space algorithm from 
	\cite{leyffer2021sequential} for the case $d \ge 2$.
	In particular, we employ and extend results on geometric
	variational problems to derive first-order optimality conditions for
	\eqref{eq:p} and trust-region subproblems using so-called \emph{local 
		variations}. The underlying variational principle gives rise to a
	sufficient decrease condition
	(related to Cauchy point computations in nonlinear programming).
	We leverage this insight to prove convergence of the trust-region
	algorithm to first-order optimal points. 
	
	As an intermediate result, we verify $\Gamma$-convergence, more specifically
	the $\liminf$- and $\limsup$-inequalities for the trust-region subproblems
	with respect to convergence of the (partial) linearization point for the
	subproblem. Notably, this result is not true for $d = 1$ but opens
	possibilities for algorithmic improvements, in particular, sources of
	inexactness in the trust-region subproblems.
	
	While rigorous numerical analysis and computational assessment
	are not our focus and beyond the scope of this article, we
	nevertheless provide first computational experiments in
	\cref{sec:comp} to allow some impression on the algorithm in practice.
	
	\paragraph{Structure of the remainder} We introduce some notation and functions
	of bounded variation in \cref{sec:bv} followed by local variations and 
	preparations in \cref{sec:local_variations}.
	We define locally optimal solutions and prove a first-order optimality condition for \eqref{eq:p} in \cref{sec:sol}.
	We define and analyze the trust-region subproblems and introduce the 
	algorithm in \cref{sec:algorithm}.
	Then we derive our variational stationarity concept and prove the convergence
	of the iterates of the trust-region algorithm to stationary points in
	\cref{sec:trm_analysis_mg2}. We provide computational experiments
	in \cref{sec:comp} and concluding remarks in
	\cref{sec:conclusion}.
	
	\section{Notation and Primer on Functions of Bounded Variation}\label{sec:bv}
	The symmetric difference of the sets $A$, $B \subset \R^d$ is $A\Delta B$.
	The Lebesgue  measure is denoted by $\lambda$. The restriction of
		a measure $\mu$ to a set $A$ is denoted by $\mu \mres A$.
	For a Lebesgue space $L^p(\Omega)$ on 
	$\Omega$, we abbreviate $\|\cdot\|_{L^p(\Omega)}$ by $\|\cdot\|_{L^p}$.
	We abbreviate the $L^2(\Omega)$-inner 
	product by $(\cdot,\cdot)_{L^2}$. The $\{0,1\}$-valued indicator function
	of a set $A$ is $\chi_A$.
	Let $E \subset \Omega$ be measurable. We define \emph{the perimeter of $E$ in $\Omega$}, see, e.g,
	\cite[Def.\ 3.35]{ambrosio2000functions}, as
	\[ P(E,\Omega) \coloneqq \sup \left\{\int_E \dvg \varphi(x)\dd x \,\Big|\, \varphi \in (C_c^1(\Omega))^d \text{ and }
	\sup_{x \in \Omega}\|\varphi(x)\| \le 1 \right\}.
	\]
	If $P(E,\Omega) < \infty$, $E$ is called a Caccioppoli set. A partition
	$\{E_i\}_{i \in I}$ of $\Omega$ is called a Caccioppoli partition if
	$\sum_{i \in I} P(E_i,\Omega) < \infty$. We denote the topological boundary of $E$
	by $\partial E$ and the reduced boundary of $E$ by $\partial^* E$, see \cite[Def.\ 3.54]{ambrosio2000functions}.
	We denote the essential boundary,
	the set of points that have neither density $1$ nor $0$ with respect to $A$,
	see \cite[Def.\ 3.54]{ambrosio2000functions},
	by $\partial^e A$.
	Note that $\partial^* E$ may intersect with $\partial^* \Omega$
	so that $P(E,\Omega) = \Ha^{d-1}(\partial^* E \cap \Omega)$
	while $P(E,\Omega) \neq \Ha^{d-1}(\partial^* E)$.
	We recall that a function $u \in L^1(\Omega)$ is of bounded variation (in $\BV(\Omega)$) if its distributional
	derivative $Du$ is a finite Radon measure in $\Omega$ \cite[Chap.\ 3]{ambrosio2000functions}.
	In other words $\TV(u) \coloneqq |Du|(\Omega) < \infty$, where $|\mu|$ denotes the total variation measure of a measure $\mu$.
	Most notably $\TV(\chi_E) = P(E,\Omega)$ for a Borel set $E \subset \Omega$.
	A sequence of functions $(v^n)_n \subset \BV(\Omega)$ converges
	\emph{weakly-$^*$} to $v \in \BV(v)$, denoted by $v^n \weakstarto v$, if $v^n \to v$ in $L^1(\Omega)$ and $\limsup_{n \to \infty} \TV(v^n) < \infty$.
	Moreover, $(v^n)_n$ converges \emph{strictly} to $v \in \BV(v)$
	if $v^n \to v$ in $L^1(\Omega)$ and $\TV(v^n) \to \TV(v) < \infty$.
	
	Feasible points for \eqref{eq:p} are functions in $\BV(\Omega)$
	that attain values only in the finite set $V$. Their distributional derivatives are absolutely continuous with respect to $\Ha^{d-1}$ \cite[Theorems 3.36 and 3.59]{ambrosio2000functions}. In particular, they are also so-called $SBV$-functions ($S$ for Special), see \cite[Sec.\ 4.1]{ambrosio2000functions}. We define
	\[ \BVV(\Omega) \coloneqq \left\{ v \in \BV(\Omega) \,|\, v(x) \in V \text{ for a.a.\ } x \in \Omega \right\} \subset \BV(\Omega), \]
	which is the feasible set of \eqref{eq:p}.
	We will use the following lemma (proven in \cref{sec:auxilary}).
	\begin{lemma}\label{lem:TV_of_BVV}
		\begin{enumerate}[label=\emph{(\alph*)}]
			\item $\BVV(\Omega)$ is sequentially weakly-$^*$ and strictly closed in $\BV(\Omega)$.
			\item Let $v \in \BVV(\Omega)$. Then there exists a Caccioppoli partition $\{E_1,\ldots,E_M\}$ of $\Omega$
			such that $v = \sum_{i=1}^{M} \nu_i \chi_{E_i}$. \label{lem:TV_of_BVV_b}
			\item Let $\sum_{i=1}^{M} \nu_i \chi_{E_i} = v \in \BV_V(\Omega)$ as in \ref{lem:TV_of_BVV_b}.
			Then it holds that
			\begin{align}\label{eq:tv_identity}
			\infty > \TV(v) &=  | Dv | (\Omega) =
			\sum_{i=1}^{M-1} \sum_{j=i+1}^{M} | \nu_i - \nu_j | \mathcal{H}^{d-1} (\partial^* E_i \cap \partial^* E_j)  \text{ and}\\
			\label{eq:Ek_has_finite_perimeter}
			\TV(v) &\geq \frac{1}{2} \sum_{i =1}^{M} P(E_i,\Omega).			
			\end{align}
		\end{enumerate}
	\end{lemma}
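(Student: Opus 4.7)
My plan is to handle the three parts in order, relying on standard structural results for BV functions and Caccioppoli partitions from \cite{ambrosio2000functions}. For part (a), I use that weak-$^*$ convergence $v^n \weakstarto v$ in $\BV(\Omega)$ entails $L^1$-convergence and hence, along a subsequence, a.e.\ convergence. Since the values of each $v^n$ lie in the closed finite set $V$ almost everywhere, the a.e.\ limit $v$ also takes values in $V$ almost everywhere, so $v \in \BVV(\Omega)$. Strict convergence subsumes weak-$^*$ convergence (the requirement $\TV(v^n)\to\TV(v)<\infty$ is stronger than $\limsup_n \TV(v^n) < \infty$), so strict closedness follows directly.

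For part (b), after relabeling I may assume $\nu_1 < \cdots < \nu_M$ and set $E_i \coloneqq v^{-1}(\nu_i)$; these sets partition $\Omega$ up to Lebesgue null sets, and $v = \sum_{i=1}^M \nu_i \chi_{E_i}$ holds a.e. For $t \in [\nu_i, \nu_{i+1})$, the superlevel set $\{v > t\}$ coincides with $E_{i+1} \cup \cdots \cup E_M$, independently of $t$ on this interval. The coarea formula \cite[Theorem 3.40]{ambrosio2000functions} then gives $\TV(v) = \sum_{i=1}^{M-1} (\nu_{i+1} - \nu_i) P(\{v > \nu_i\}, \Omega)$, so each superlevel set has finite perimeter, and since $E_i = \{v > \nu_{i-1}\} \setminus \{v > \nu_i\}$ (with the natural conventions at $i = 1$ and $i = M$), finite perimeter is preserved under this set-algebraic operation, giving a Caccioppoli partition.

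For part (c), I exploit that $v$ is piecewise constant on the Caccioppoli partition from (b), so $v \in SBV(\Omega)$ with vanishing approximate gradient and Cantor part, and its derivative is purely jump. By the structure result for Caccioppoli partitions \cite[Theorem 4.17]{ambrosio2000functions}, the reduced boundary decomposes, up to an $\Ha^{d-1}$-negligible set, as $\partial^* E_i \cap \Omega = \bigcup_{j \ne i} (\partial^* E_i \cap \partial^* E_j \cap \Omega)$, with pairwise intersections mutually $\Ha^{d-1}$-disjoint. On $\partial^* E_i \cap \partial^* E_j$, the traces of $v$ are $\nu_i$ and $\nu_j$, so the jump height equals $|\nu_i - \nu_j|$ and the $SBV$ jump-integral representation yields $|Dv|(\Omega) = \sum_{i < j} |\nu_i - \nu_j| \Ha^{d-1}(\partial^* E_i \cap \partial^* E_j \cap \Omega)$, which is the claimed identity. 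Summing $P(E_i, \Omega) = \sum_{j \ne i} \Ha^{d-1}(\partial^* E_i \cap \partial^* E_j \cap \Omega)$ over $i$ gives $2 \sum_{i<j} \Ha^{d-1}(\partial^* E_i \cap \partial^* E_j \cap \Omega)$, and using $|\nu_i - \nu_j| \ge 1$ for distinct integers concludes the bound.

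The main obstacle is the jump-set computation underpinning the TV identity in (c): it demands careful use of the reduced-boundary decomposition for Caccioppoli partitions together with the $SBV$ jump-integral formula, so that mutual singularity of the pieces of the jump set is handled correctly rather than bounded crudely by $\sum_i |\nu_i|\, |D\chi_{E_i}|$. Once these ingredients from \cite{ambrosio2000functions} are invoked, the proof reduces to bookkeeping.
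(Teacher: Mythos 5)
Your proof follows essentially the same route as the paper's: superlevel sets and \cite[Thm.\ 3.40]{ambrosio2000functions} for (b), and the structure theorem for Caccioppoli partitions \cite[Thm.\ 4.17]{ambrosio2000functions} together with mutual $\Ha^{d-1}$-singularity of the pairwise intersections for (c); the paper phrases the jump computation via $Dv=\sum_i\nu_i D\chi_{E_i}$ and Theorems 3.36/3.59 rather than the $SBV$ trace representation, but this is the same bookkeeping. For (a) the paper simply cites \cite[Lem.\ 2.2]{leyffer2021sequential}; your a.e.-subsequence argument is the standard proof of that fact. One detail you gloss over: you arrive at $\sum_{i<j}|\nu_i-\nu_j|\,\Ha^{d-1}(\partial^*E_i\cap\partial^*E_j\cap\Omega)$, whereas \eqref{eq:tv_identity} is stated without the intersection with $\Omega$; since the $E_i$ exhaust $\Omega$, their reduced boundaries may meet $\partial\Omega$, and one still has to argue that $\Ha^{d-1}\bigl((\partial^*E_i\cap\partial^*E_j)\setminus\Omega\bigr)=0$. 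The paper does this by noting that every point of $\partial^*E_i\cap\partial^*E_j$ has density $1$ with respect to $\Omega$, hence lies outside $\partial^*\Omega$, and invoking $\Ha^{d-1}(\partial\Omega\setminus\partial^*\Omega)=0$ for Lipschitz domains; you should add this step, but it does not affect the rest of your argument.
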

	
	\section{Local Variations}\label{sec:local_variations}
	A key concept to derive sensible notions of stationarity for 
	\eqref{eq:p} and subsequently a sufficient decrease condition
	for our algorithmic framework are \emph{local variations},
	which allow to analyze smooth perturbations of the boundaries
	of the level sets of feasible control functions.
	We introduce the relevant concepts leaning on \cite{maggi2012sets}.
	\begin{definition}
		\begin{enumerate}[label=\emph{(\alph*)}]
			\item A \emph{one parameter family of diffeomorphisms of $\R^d$} is
			a smooth function $f:  (-\varepsilon,\varepsilon)  \times \R^d\to\R^d$
			for some $\varepsilon > 0$ such that for all $t \in (-\varepsilon,\varepsilon)$,
			the function $f_t(\cdot) \coloneqq f(t,\cdot) : \R^d \to\R^d$ is a diffeomorphism.
			\item Let $A \subset \R^d$ be open. Then the family
			$(f_t)_{t\in(-\varepsilon,\varepsilon)}$ is a
			\emph{local variation in $A$} if additionally to (a) we have
			$f_0(x) = x$ for all $x \in \R^d$ and there is a compact set $K \subset A$
			such that $\{x \in \R^d\,|\,f_t(x) \neq x\} \subset K$
			for all  $t \in (-\varepsilon,\varepsilon)$.
			\item For a local variation, we define its \emph{initial velocity}:
			$\phi(x) \coloneqq \frac{\partial f}{\partial t}(0, x)$ for $x \in \R^d$.
		\end{enumerate}
	\end{definition}
	We recall basic properties and a certain local variation that is
	important in the remainder.
	\begin{proposition}\label{prp:elementary_local_variation_properties}
		Let $\phi \in C_c^\infty(\Omega,\R^d)$. Let
		$f_t \coloneqq I + t\phi$ for $t \in \R$.
		Then $(f_t)_{t\in(-\varepsilon,\varepsilon)}$
		is a local variation in $\Omega$ with initial velocity $\phi$
		for some $\varepsilon > 0$.
	\end{proposition}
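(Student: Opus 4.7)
The plan is to verify each clause of the definition of a local variation in order, identifying the choice of $\varepsilon$ only in the step where it is forced by the diffeomorphism requirement. Smoothness of $f(t,x) = x + t\phi(x)$ jointly in $(t,x)$ is immediate because $\phi \in C_c^\infty(\Omega,\R^d)$, and $f_0 = I$ as well as $\partial_t f(0,x) = \phi(x)$ are trivial from the explicit formula. Taking $K \coloneqq \supp \phi \subset \Omega$, which is compact by the $C_c^\infty$-assumption, any $x \notin K$ satisfies $\phi(x) = 0$ and hence $f_t(x) = x$ for every $t$, which gives the localization property with the same $K$ for all $t$.

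The only nontrivial step is to produce $\varepsilon > 0$ such that $f_t : \R^d \to \R^d$ is a diffeomorphism for $|t| < \varepsilon$. Let $L \coloneqq \sup_{x \in \R^d} \|D\phi(x)\|$, which is finite because $D\phi$ is continuous and compactly supported. Pick any $\varepsilon \in (0, 1/(L+1))$ so that $|t| L < 1$ on $(-\varepsilon,\varepsilon)$ (if $L = 0$ then $\phi \equiv 0$ and there is nothing to prove). I would then argue:
\begin{enumerate}[label=(\roman*)]
\item \emph{Local diffeomorphism.} The Jacobian $Df_t(x) = I + t\,D\phi(x)$ satisfies $\|t\,D\phi(x)\| \le |t| L < 1$, so by a Neumann series it is invertible for every $x$. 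The inverse function theorem yields that $f_t$ is a local $C^\infty$-diffeomorphism at every point.
\item \emph{Injectivity.} If $f_t(x) = f_t(y)$, then $x - y = -t(\phi(x) - \phi(y))$, and the mean value inequality applied to $\phi$ gives $|x-y| \le |t| L \, |x-y|$, hence $x=y$.
\item \emph{Surjectivity.} For fixed $x \in \R^d$ and $|t| < \varepsilon$, the map $T_x : y \mapsto x - t\phi(y)$ is a contraction on $\R^d$ with constant $|t|L < 1$; its unique fixed point $y^\ast$ satisfies $f_t(y^\ast) = x$.
\end{enumerate}
Combining (i)--(iii), $f_t$ is a smooth bijection whose inverse is smooth by the inverse function theorem, i.e., a diffeomorphism of $\R^d$ for every $|t| < \varepsilon$.

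The main obstacle, to the extent there is one, is surjectivity of $f_t$ on the whole of $\R^d$; the cleanest device is the Banach fixed point theorem as above, which also avoids invoking covering space or degree-theoretic arguments. All remaining clauses of the definition follow directly from the explicit form $f_t = I + t\phi$, completing the proposition.
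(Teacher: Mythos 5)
Your proposal is correct and follows essentially the same route as the paper: the paper bounds $\|\nabla f_t(x) - I\| \le |t| M$ and invokes its Lemma~\ref{lem:derivative_arg}, whose proof is exactly your combination of the Banach fixed point theorem (for bijectivity) and the inverse function theorem (for smoothness of the inverse), while the remaining clauses are checked identically via $K = \supp\phi$. Your separate injectivity step (ii) is redundant, since uniqueness of the fixed point in (iii) already yields it, but this is harmless.
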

	\begin{proof}
		Because of $\| \nabla f_t(x) - I \| = \|t \nabla \phi(x) \| = |t| \| \nabla \phi(x) \| \leq |t| M$
		with $M \coloneqq \max_{x \in \bar{\Omega}} \| \nabla \phi(x) \| < \infty$, \cref{lem:derivative_arg} yields the existence of some $\varepsilon >0$ such that $(f_t)_{t \in (-\varepsilon,\varepsilon)}$ is a one parameter family of diffeomorphisms.
		We have $f_0(x) = x$ for all $x \in \R^d$ and $\{ x \in \R^d | f_t(x) \neq x \} \subset \supp \phi \subset \Omega$ for all $t \in \R$. Hence, $(f_t)_{t \in (-\varepsilon,\varepsilon)}$ is a local variation in $\Omega$ with initial velocity $\frac{\partial f}{\partial t} (0,x) = \phi(x)$ for $x \in \R^d$.
	\end{proof}
	Let $v = \sum_{i=1}^M \nu_i \chi_{E_i}$ for a Caccioppoli partition $\{E_1, \ldots, E_M\}$ of $\Omega$.
	Let $(f_t)_{t\in(-\varepsilon,\varepsilon)}$ be a
	local variation in $\Omega$ with initial velocity $\phi$.
	Because the $f_t$ are diffeomorphisms it follows that
	$\{f_t(E_1),\ldots,f_t(E_M)\}$ is a partition of $\Omega$
	that induces the piecewise constant function
	\[ f_{t}^{\#}v \coloneqq \sum_{i=1}^M \nu_i \chi_{f_t(E_i)}. \]
	For a $C_c^\infty(\Omega;\R^d)$-function $\phi$ we
	introduce the notation $\bdvg{E} \phi: \partial^*E \to \R$
	for the so-called \emph{boundary divergence of $\phi$ on $E$},
	\[ \bdvg{E}\phi(x) \coloneqq \dvg \phi(x) - n_E(x) \cdot \nabla \phi(x)n_E(x), \]
	where $n_E$ denotes the unit outer normal vector on the
	reduced boundary of $E$. We obtain Taylor expansions
	for $(g, f^{\#}_t v)_{L^2}$ and $\TV(f^{\#}_t v)$ with respect to $t$,
	which we prove below.
	\begin{lemma}[Extension of Theorem 17.5 in \cite{maggi2012sets}]\label{lem:Taylor_TV_mg2}
		Let $\{E_1,\ldots,E_M\}$ be a Caccioppoli partition of $\Omega$,
		and $v = \sum_{i=1}^M \nu_i \chi_{E_i}$.
		Let $(f_t)_{t\in(-\varepsilon,\varepsilon)}$ be a
		local variation in $\Omega$ with initial velocity $\phi$.
		Then there exists $\varepsilon_0 > 0$ such that
		\[ \TV(f_t^{\#}v) =
		\TV(v) + t\sum_{i=1}^M \sum_{j=i + 1}^M|\nu_i - \nu_j|
		\int_{\partial^*E_i \cap \partial^* E_j}\bdvg{E_i} \phi(x)\dd \Ha^{d-1}(x)
		+ O(t^2)
		\]
		for all $t \in (-\varepsilon_0,\varepsilon_0)$.
	\end{lemma}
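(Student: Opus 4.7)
The plan is to reduce the Taylor expansion of $\TV(f_t^{\#} v)$ to pairwise expansions of the $\Ha^{d-1}$-measures of the ``shared'' pieces $\partial^*E_i \cap \partial^* E_j$ of the reduced boundaries and then invoke (a variant of) \cite[Thm.~17.5]{maggi2012sets} on each such piece. Since the partition is finite and the initial velocity $\phi$ is compactly supported, all error terms will be controllable uniformly.

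First, I would apply the identity \eqref{eq:tv_identity} from \cref{lem:TV_of_BVV} both to $v$ and to $f_t^{\#}v = \sum_i \nu_i \chi_{f_t(E_i)}$. Note that $\{f_t(E_1),\ldots,f_t(E_M)\}$ is again a Caccioppoli partition of $\Omega$ whenever $|t|$ is small enough, because $f_t$ is a diffeomorphism that is the identity outside a compact subset of $\Omega$; in particular the perimeters are finite. The identity \eqref{eq:tv_identity} then gives
\[
\TV(f_t^{\#} v) \;=\; \sum_{i=1}^{M-1}\sum_{j=i+1}^M |\nu_i - \nu_j|\,\Ha^{d-1}\!\bigl(\partial^*(f_t(E_i)) \cap \partial^*(f_t(E_j))\bigr).
\]
So it suffices to prove, for each fixed pair $i<j$, the one-pair expansion
\[
\Ha^{d-1}\!\bigl(\partial^*(f_t(E_i)) \cap \partial^*(f_t(E_j))\bigr)
= \Ha^{d-1}(\partial^* E_i \cap \partial^* E_j)
+ t \!\int_{\partial^* E_i \cap \partial^* E_j}\!\! \bdvg{E_i}\phi \,\dd \Ha^{d-1} + O(t^2).
\]

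Second, for the one-pair identity I would use the diffeomorphism property of $f_t$: up to $\Ha^{d-1}$-negligible sets, $\partial^*(f_t(E_k)) = f_t(\partial^* E_k)$ for each $k$, and the intersection transforms as $f_t(\partial^* E_i \cap \partial^* E_j)$ modulo a null set. By the area formula on rectifiable sets,
\[
\Ha^{d-1}\bigl(f_t(\partial^*E_i \cap \partial^*E_j)\bigr)
= \int_{\partial^*E_i \cap \partial^*E_j} J^{\tau}_{E_i} f_t \,\dd \Ha^{d-1},
\]
where $J^{\tau}_{E_i} f_t(x)$ is the tangential Jacobian of $f_t$ with respect to the approximate tangent plane of $\partial^* E_i$ at $x$ (equivalently, of $\partial^* E_j$, since the two tangent planes agree $\Ha^{d-1}$-a.e.\ on $\partial^* E_i \cap \partial^* E_j$). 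This is exactly the setting of Maggi's Theorem~17.5, whose local geometric content is the pointwise expansion
\[
J^{\tau}_{E_i} f_t(x) = 1 + t\,\bdvg{E_i}\phi(x) + O(t^2),
\]
where the $O(t^2)$-term is bounded uniformly in $x$ in terms of $\|\phi\|_{C^2}$ and $\supp \phi$. Observe that, $\Ha^{d-1}$-a.e.\ on $\partial^* E_i \cap \partial^* E_j$, we have $n_{E_i}(x) = -n_{E_j}(x)$, so $\bdvg{E_i}\phi(x) = \bdvg{E_j}\phi(x)$ because the correction term $n \cdot \nabla \phi\, n$ is quadratic in $n$. Hence the linear term is well-defined regardless of the labeling $i,j$. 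Integrating the pointwise expansion over $\partial^*E_i \cap \partial^*E_j$ and using that the error is uniform gives the one-pair identity.

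Finally, I would plug the one-pair expansions back into the sum. The $t^0$ terms reassemble to $\TV(v)$ by \eqref{eq:tv_identity}, the $t^1$ terms assemble to the claimed boundary divergence sum, and the finitely many $O(t^2)$ remainders add up to $O(t^2)$ with constants controlled by $\max_{i,j}|\nu_i - \nu_j|$, the total perimeter $\sum_k P(E_k,\Omega)$, and $\|\phi\|_{C^2}$. Choosing $\varepsilon_0 > 0$ small enough (so that $f_t$ is a diffeomorphism and the Taylor remainders of the pointwise Jacobian expansions are valid on the compact set $K \supset \{f_t \neq I\}$) yields the claim for all $t \in (-\varepsilon_0,\varepsilon_0)$. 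The main technical obstacle is the pointwise Jacobian expansion with uniform remainder; this is essentially \cite[Thm.~17.5]{maggi2012sets}, so the contribution of this lemma is chiefly to combine that single-set result with \eqref{eq:tv_identity} and to verify that the boundary divergence is indeed well-defined on the shared reduced boundaries of the Caccioppoli partition.
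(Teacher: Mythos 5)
Your proposal is correct and follows essentially the same route as the paper's proof: reduce via the identity \eqref{eq:tv_identity} to pairwise terms, identify $\partial^* f_t(E_k)$ with $f_t(\partial^* E_k)$, apply the area formula to express each term as an integral of the tangential Jacobian over $\partial^* E_i \cap \partial^* E_j$, and expand that Jacobian as $1 + t\,\bdvg{E_i}\phi + O(t^2)$ using the computation from the proof of Theorem 17.5 in Maggi. Your extra observation that $\bdvg{E_i}\phi = \bdvg{E_j}\phi$ on the shared reduced boundary (since the normal enters quadratically) is a nice explicit justification of a point the paper leaves implicit.
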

	\begin{proof}
		The claim follows with the identities
		\begin{align*}
		\hspace{2em}&\hspace{-2em}\TV(f_t^{\#}v) \\
		&= \sum_{i=1}^M \sum_{j=i + 1}^M|\nu_i - \nu_j| \Ha^{d - 1}(\partial^* f_t(E_i) \cap \partial^* f_t(E_j))\\
		&= \sum_{i=1}^M \sum_{j=i + 1}^M|\nu_i - \nu_j| \Ha^{d - 1}(\partial^* f_t(E_i) \cap f_t(\partial^* E_j))\\
		&= \sum_{i=1}^M \sum_{j=i + 1}^M|\nu_i - \nu_j| \int_{\partial^* E_i \cap \partial^* E_j}
		\mathcal{J}_xf_t(x) \|( (\nabla f^{-1}_t \circ f_t)(x))^T n_{E_i}(x)\|\dd \Ha^{d-1}(x)\\
		&= \sum_{i=1}^M \sum_{j=i + 1}^M|\nu_i - \nu_j|
		\int_{\partial^*{E_i} \cap \partial^* E_j}
		(1 + t\bdvg{E_i} \phi(x)) \dd \Ha^{d-1}(x)
		+ O(t^2) \\
		&= \TV(v) + t\sum_{i=1}^M \sum_{j=i + 1}^M|\nu_i - \nu_j|
		\int_{\partial^*E_i \cap \partial^* E_j}\bdvg{E_i} \phi(x)\dd \Ha^{d-1}(x)
		+ O(t^2)
		\end{align*}
		that are due to (in the order of their appearance):
		\eqref{eq:tv_identity}, Prop.\,17.1 (17.4) in \cite{maggi2012sets},
		Prop.\,17.1 (17.6) in \cite{maggi2012sets},
		the proof of Thm 17.5 in \cite{maggi2012sets},
		and \eqref{eq:tv_identity}. Note that $\mathcal{J}_x f_t$ denotes the Jacobian
			determinant of the function $f_t$.
	\end{proof}
	\begin{corollary}
		Let $\{E_1,\ldots,E_M\}$ be a Caccioppoli partition of $\Omega$,
		Let $(f_t)_{t\in(-\varepsilon,\varepsilon)}$ be a
		local variation with initial velocity $\phi \in C_c^\infty(\Omega;\R^d)$.
		Then there exists $\varepsilon_0 > 0$ such that
		the partition $\{f_t(E_1),\ldots,f_t(E_M)\}$
		is a Caccioppoli partition
		for all $t \in (-\varepsilon_0,\varepsilon_0)$.
	\end{corollary}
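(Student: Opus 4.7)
The plan is to reduce the perimeter bound for the transported sets to a total variation bound for a carefully chosen piecewise constant function, then apply \cref{lem:Taylor_TV_mg2} and the lower bound \eqref{eq:Ek_has_finite_perimeter}.

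First, I would assign pairwise distinct labels, for example $\nu_i := i$ for $i \in \{1,\ldots,M\}$, and consider $v := \sum_{i=1}^M i\,\chi_{E_i}$ together with $V := \{1,\ldots,M\}$. The Caccioppoli property of $\{E_1,\ldots,E_M\}$ combined with \eqref{eq:tv_identity} gives $\TV(v) < \infty$, so $v \in \BVV(\Omega)$. Next I would verify that $\{f_t(E_1),\ldots,f_t(E_M)\}$ is again a partition of $\Omega$: pairwise disjointness follows from injectivity of the diffeomorphism $f_t$, and for coverage it suffices to show $f_t(\Omega) = \Omega$. The latter follows from the local variation property, since $f_t$ agrees with the identity outside a compact $K \subset \Omega$; injectivity together with this support condition forces $f_t(K) \subseteq K$, and the same argument applied to $f_t^{-1}$ yields $f_t(K) = K$ and thus $f_t(\Omega) = \Omega$.

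With these ingredients in place, \cref{lem:Taylor_TV_mg2} applied to $v$ provides some $\varepsilon_0 > 0$ such that $\TV(f_t^{\#} v) = \TV(v) + O(t) < \infty$ for all $t \in (-\varepsilon_0,\varepsilon_0)$, so $f_t^{\#} v \in \BVV(\Omega)$. Because the chosen values are pairwise distinct, the Caccioppoli decomposition of $f_t^{\#} v$ produced by part (b) of \cref{lem:TV_of_BVV} coincides, up to Lebesgue-null sets, with $\{f_t(E_1),\ldots,f_t(E_M)\}$. Applying inequality \eqref{eq:Ek_has_finite_perimeter} to $f_t^{\#} v$ then yields
\[
\sum_{i=1}^M P(f_t(E_i),\Omega) \;\leq\; 2\,\TV(f_t^{\#} v) \;<\; \infty,
\]
which is exactly the definition of a Caccioppoli partition.

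The main conceptual step is recognizing that the partition structure, which carries no numerical labels a priori, can be made accessible to \cref{lem:Taylor_TV_mg2} by artificially attaching pairwise distinct values $\nu_i$; once this trick is identified, the rest is bookkeeping. Verifying that $\{f_t(E_i)\}_i$ is a partition of $\Omega$ is routine given the compact support of $\{x \,|\, f_t(x) \neq x\}$, and no estimate beyond the Taylor expansion of \cref{lem:Taylor_TV_mg2} together with \eqref{eq:Ek_has_finite_perimeter} is required.
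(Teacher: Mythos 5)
Your proof is correct and takes essentially the same approach as the paper, whose one-line proof simply invokes \cref{lem:Taylor_TV_mg2} and \eqref{eq:tv_identity}: your device of attaching pairwise distinct labels $\nu_i$ and then applying \eqref{eq:Ek_has_finite_perimeter} to $f_t^{\#}v$ is precisely the argument the paper leaves implicit, with the partition property of $\{f_t(E_i)\}_i$ spelled out. The only minor caveat is that to establish $\TV(v)<\infty$ at the outset you should bound $|Dv|(\Omega)\le\sum_{i=1}^{M}|\nu_i|\,P(E_i,\Omega)$ directly from the Caccioppoli-partition property rather than via \eqref{eq:tv_identity}, whose statement formally presupposes $v\in\BVV(\Omega)$.
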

	\begin{proof}
		The claim follows from \cref{lem:Taylor_TV_mg2} and
		\eqref{eq:tv_identity}.
	\end{proof}
	
	\begin{lemma}[Extension of Proposition 17.8 in \cite{maggi2012sets}]\label{lem:Taylor_lin_mg2}
		Let $(f_t)_{t\in(-\varepsilon,\varepsilon)}$ be a
		local variation with initial velocity $\phi \in C_c^\infty(\Omega;\R^d)$.
		Let $g \in C(\bar{\Omega})$.
		Then it follows that
		\[ \int_{\Omega} g(x)(f_{t}^{\#}v(x) - v(x))\dd x
		=t \sum_{i=1}^M \nu_i
		\int_{\partial^*E_i \cap \Omega} g(x)\left(\phi(x)\cdot n_{E_i}(x)\right)
		\dd \Ha^{d-1}(x) + o(t).
		\]
	\end{lemma}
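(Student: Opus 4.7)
The plan is to reduce the claim to the single-set case covered by Proposition 17.8 of \cite{maggi2012sets} by exploiting the piecewise constant structure of $v$. Writing
\[
f_t^{\#}v - v = \sum_{i=1}^{M} \nu_i \bigl(\chi_{f_t(E_i)} - \chi_{E_i}\bigr)
\]
and using linearity of the integral, the assertion reduces to proving, for each fixed $i \in \{1,\ldots,M\}$, that
\[
\int_{\Omega} g(x)\bigl(\chi_{f_t(E_i)}(x) - \chi_{E_i}(x)\bigr)\dd x
= t \int_{\partial^* E_i \cap \Omega} g(x)\,\phi(x)\cdot n_{E_i}(x)\dd \Ha^{d-1}(x) + o(t),
\]
and then summing the resulting $M$ expansions with weights $\nu_i$; since $M$ is finite, the error terms still aggregate to $o(t)$.

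For the per-set expansion I would proceed as in Maggi's proof. The identity $\chi_{f_t(E_i)}(y) = \chi_{E_i}(f_t^{-1}(y))$ combined with the change of variables $y = f_t(x)$ yields
\[
\int_{\Omega} g(x)\bigl(\chi_{f_t(E_i)}(x) - \chi_{E_i}(x)\bigr)\dd x
= \int_{E_i} \bigl[g(f_t(x))\mathcal{J}_x f_t(x) - g(x)\bigr]\dd x.
\]
Smoothness of $f$ in $t$ gives $\mathcal{J}_x f_t(x) = 1 + t\dvg\phi(x) + O(t^2)$, so the integrand equals $t\,\dvg(g\phi)(x)$ plus a remainder of order $o(t)$ uniformly on $E_i$ (here $g \in C(\bar\Omega)$ enters through uniform continuity on the compact set $\overline{\supp \phi}$). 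Applying the Gauss--Green theorem for Caccioppoli sets to the compactly supported field $g\phi$ (after a standard smooth approximation of $g$ using $P(E_i,\Omega) < \infty$ from \cref{lem:TV_of_BVV}) converts the volume integral of $\dvg(g\phi)$ into the claimed boundary integral over $\partial^* E_i \cap \Omega$; the boundary term $\partial^* E_i \cap \partial\Omega$ drops because $\phi \in C_c^\infty(\Omega;\R^d)$.

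The main technical obstacle is that $g$ is only assumed continuous, so a literal Taylor expansion of $g \circ f_t$ is unavailable and one cannot apply the divergence theorem directly to $g\phi$. I would overcome this by a density argument: approximate $g$ uniformly on $\bar\Omega$ by $g_\varepsilon \in C^\infty(\bar\Omega)$, carry out the expansion and apply the divergence theorem for each $g_\varepsilon$, and then pass $\varepsilon \to 0$ using the finiteness of $P(E_i,\Omega)$ (so the boundary integrals are controlled by $\|g - g_\varepsilon\|_{\infty} \|\phi\|_{\infty} P(E_i,\Omega)$) and the uniform bound on $\lambda(f_t(E_i)\Delta E_i)$ that follows from the smoothness of $f$ and the compact support of $\phi$. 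Summing the finitely many set-wise expansions with the coefficients $\nu_i$ then yields the stated formula.
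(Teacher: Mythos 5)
Your proposal is correct and takes essentially the same route as the paper: decompose $f_t^{\#}v - v = \sum_{i=1}^M \nu_i(\chi_{f_t(E_i)} - \chi_{E_i})$ and apply the first variation of the ($g$-weighted) volume to each Caccioppoli set $E_i$ separately. The paper's proof simply cites Proposition 17.8 of Maggi for the per-set expansion, whereas you additionally carry out that step---change of variables, expansion of the Jacobian, Gauss--Green on sets of finite perimeter, and the smoothing of $g$ with the $\lambda(f_t(E_i)\Delta E_i) = O(t)P(E_i,\Omega)$ and $P(E_i,\Omega)<\infty$ controls needed because $g$ is only continuous---which is precisely the ``extension'' the lemma's title alludes to and which the paper leaves implicit.
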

	\begin{proof}
		The claim follows from the following identities, where
		the second is due to Proposition 17.8 in
		Section 17.3 in \cite{maggi2012sets}:
		\begin{align*}
		\int_{\Omega} g(x)(f_{t}^{\#}v(x) - v(x))\,\dd x
		&= \sum_{i=1}^M \nu_i
		\int_\Omega g(x)(\chi_{f_t(E_i)}(x) - \chi_{E_i}(x))\,\dd x\\
		&= t \sum_{i=1}^M \nu_i
		\int_{\partial^*E_i \cap \Omega} g(x)\left(\phi(x)\cdot n_{E_i}(x)\right)
		\dd \Ha^{d-1}(x) + o(t).
		\end{align*}
	\end{proof}
	If a local variation $(f_t)_{t \in (-\varepsilon,\varepsilon)}$ is given as in \cref{prp:elementary_local_variation_properties}, we can obtain Lipschitz continuity for the measure of
	the intersection of a Caccioppoli set $F$ and a transformed Caccioppoli set $f_t(E)$.
	\begin{lemma}\label{lem:Lipschitz_symmetric_difference}
		Let $\phi \in C_c^\infty(\Omega; \R^d)$. Let $(f_t)_{t\in(-\varepsilon,\varepsilon)}$ be the local variation defined by $f_t \coloneqq I + t \phi$ for $t \in (-\varepsilon,\varepsilon)$.
		Then there exist $0 < \varepsilon_0 < \varepsilon$ and $L > 0$ such that for all $s$, $t \in (-\varepsilon_0,\varepsilon_0)$
		and all Caccioppoli sets $E$, $F$ in $\Omega$ it holds that
		\[ \left|\lambda(f_t(E)\cap F) - \lambda(f_s(E)\cap F)\right|
		\le L |t - s| P(E,\Omega). \]
	\end{lemma}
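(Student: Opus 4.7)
The plan is to reduce the estimate to a uniform bound on $\lambda(f_t(E) \Delta f_s(E))$ and then apply the area formula to a ``space-time'' parametrization of the reduced boundary $\partial^* E$ under the family $(f_r)_r$. The identity $\lambda(f_t(E) \cap F) - \lambda(f_s(E) \cap F) = \int_\Omega \chi_F(\chi_{f_t(E)} - \chi_{f_s(E)}) \dd x$ together with $|\chi_{f_t(E)} - \chi_{f_s(E)}| = \chi_{f_t(E) \Delta f_s(E)}$ gives
\[
|\lambda(f_t(E) \cap F) - \lambda(f_s(E) \cap F)| \le \lambda(f_t(E) \Delta f_s(E)),
\]
so it suffices to produce $\varepsilon_0 \in (0,\varepsilon)$ and $L > 0$, depending only on $\phi$, such that $\lambda(f_t(E) \Delta f_s(E)) \le L |t-s| P(E,\Omega)$ for every Caccioppoli set $E \subset \Omega$.

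To obtain this bound, I introduce the space-time map $\Psi : (-\varepsilon,\varepsilon) \times \Omega \to \Omega$ given by $\Psi(r,y) := f_r(y) = y + r\phi(y)$, and set $I := [\min\{s,t\},\max\{s,t\}]$. The key geometric observation is that, up to a $\lambda$-null set,
\[
f_t(E) \Delta f_s(E) \subset \Psi\bigl(I \times (\partial^* E \cap \Omega)\bigr).
\]
Indeed, for $\lambda$-a.e.\ $x \in f_t(E) \Delta f_s(E)$ the good representative of $\chi_E$ disagrees at $f_s^{-1}(x)$ and $f_t^{-1}(x)$, so the smooth curve $r \mapsto f_r^{-1}(x)$ must meet $\partial^e E$ at some $r_0 \in I$. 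The set of those $x$ whose trajectories meet only $\partial^e E \setminus \partial^* E$ is contained in $\Psi(I \times (\partial^e E \setminus \partial^* E))$, which is $\lambda$-null because $\mathcal{H}^{d-1}(\partial^e E \setminus \partial^* E) = 0$ for Caccioppoli $E$ and $\Psi$ is Lipschitz. Applying the area formula to the restriction of $\Psi$ to the $d$-dimensional set $I \times (\partial^* E \cap \Omega)$, equipped with the reference measure $\mathcal{L}^1 \otimes \mathcal{H}^{d-1}$, yields
\[
\lambda(f_t(E) \Delta f_s(E)) \le \int_I \int_{\partial^* E \cap \Omega} \mathcal{J}_d \Psi(r,y) \dd \mathcal{H}^{d-1}(y) \dd r.
\]
Since $\partial_r \Psi(r,y) = \phi(y)$ and $\partial_y \Psi(r,y) = I + r\nabla\phi(y)$, choosing $\varepsilon_0$ small enough (as in the argument of \cref{prp:elementary_local_variation_properties}) keeps $I + r\nabla\phi$ uniformly close to the identity, so $\mathcal{J}_d \Psi$ is bounded by a constant $C$ depending only on $\|\phi\|_\infty$ and $\|\nabla\phi\|_\infty$. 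Carrying out the $r$- and $y$-integrations then gives $\lambda(f_t(E) \Delta f_s(E)) \le C |t-s| P(E,\Omega)$, so we may take $L := C$.

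The main technical point is the set inclusion above: one must fix a good pointwise representative of $\chi_E$, run the sign-change argument along the trajectories $r \mapsto f_r^{-1}(x)$, and use the Lipschitz pushforward to discard the negligible trajectories that meet only $\partial^e E \setminus \partial^* E$. The area-formula step and the Jacobian bound are then straightforward consequences of the smoothness and compact support of $\phi$, and notably the bound produced is uniform in both $E$ and $F$, as required.
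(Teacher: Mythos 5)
Your reduction to bounding $\lambda(f_t(E)\Delta f_s(E))$ and the concluding area-formula/Jacobian estimates are sound, but the core of your argument---the inclusion $f_t(E)\Delta f_s(E)\subset \Psi\bigl(I\times(\partial^* E\cap\Omega)\bigr)$ up to a null set---rests on the assertion that a trajectory $r\mapsto f_r^{-1}(x)$ running from a point of density $1$ to a point of density $0$ ``must meet $\partial^e E$''. This intermediate-value-type step is not justified and is false pointwise: the density function of a set of finite perimeter along a continuous curve is only a pointwise limit of continuous functions, and a $\{0,1\}$-valued Baire class~$1$ function on a connected set need not be constant. Concretely, take $d=2$, $Q=\{(x,y)\,:\,0<x<1,\ |y|<x^2\}$ and $E=B_2(0)\setminus Q$. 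Then $E$ has finite perimeter, the origin has density $1$ with respect to $E$ because $\lambda(Q\cap B_r(0))\le \tfrac23 r^3=o(r^2)$, every point $(x,0)$ with $0<x<1$ is interior to $Q$ and hence has density $0$ with respect to $E$, and the segment joining $(0,0)$ to $(1/2,0)$ meets $\partial^e E$ nowhere. So a smooth curve can pass from $E^{(1)}$ to $E^{(0)}$ without touching the essential boundary, and the word ``so'' in your argument hides a genuine gap. What you actually need is the almost-everywhere version of the crossing statement, and that is not a topological fact but the content of the slicing theory of sets of finite perimeter (Vol'pert's theorem on one-dimensional sections, applied after straightening the flow $\Psi$); none of that machinery appears in your write-up.

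For comparison, the paper's proof avoids the slicing issue entirely: it writes the difference as $\int_F|\chi_E(g_t(x))-\chi_E(g_s(x))|\dd x$ with $g_r=f_r^{-1}$, replaces $\chi_E$ by smooth approximations $u_\delta$ converging strictly in $\BV(\Omega)$, applies the fundamental theorem of calculus along the segment between $g_s(x)$ and $g_t(x)$, changes variables with a uniform lower bound on the Jacobian determinant, and passes to the limit $\delta\to 0$ using $\int_\Omega\|\nabla u_\delta\|\dd x\to P(E,\Omega)$. If you want to keep your geometric route you must either import Vol'pert's slicing theorem and carry out the straightening carefully, or switch to this mollification argument.
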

	\begin{proof}
		We adapt the strategy of the proof of Lemma 17.9 in \cite{maggi2012sets} that shows the bound $\lambda(f_{t}(E)\Delta E) \le L |t| \TV(\chi_E)$.
		Let $g_t \coloneqq f_t^{-1}$ for all $t \in (-\varepsilon,\varepsilon)$.
		Let $s$, $t \in (-\varepsilon,\varepsilon)$ be given.
		We define $u_\delta \in \BV (\Omega) \cap C^\infty(\Omega)$ as in \cite[Thm. 5.3]{evans2015measure} so that
		\begin{gather}\label{eq:partition_of_unity}
		u_\delta \to \chi_E\text{ in }L^1(\Omega)\enskip\text{and}\enskip
		\int_\Omega \|\nabla u_\delta(x)\|\dd x = | D u_\delta |(\Omega) \to |D \chi_E| (\Omega)
		\enskip\text{as}\enskip\delta \to 0.
		\end{gather}
		This implies 
		\begin{align}
		\left|\lambda(f_t(E)\cap F) - \lambda(f_s(E)\cap F)\right|
		&= \left|\int_{F} |\chi_E(g_{t}(x))|\dd x
		- \int_{F} |\chi_E(g_{s}(x))|\dd x\right| \nonumber \\
		&\le \int_{F} |\chi_E(g_{t}(x)) - \chi_E(g_{s}(x))|\dd x \label{eq:int} \\
		&=\lim_{\delta \to 0} \int_{F} |u_\delta(g_{t}(x)) - u_\delta(g_{s}(x))|\dd x, \nonumber
		\end{align}
		where we have used the reverse triangle inequality to obtain the inequality.
		Before we continue to analyze
		the right hand side of this estimate, we need some preparations.
		
		We define $G_{t,s,\alpha}(x) \coloneqq \alpha g_{t}(x) + (1 - \alpha)g_{s}(x)$
		for $\alpha \in [0,1]$ and $x \in \R^d$.
		For the derivative of $G_{t,s,\alpha}(x)$ with respect to $\alpha$, we obtain
		\[ \frac{\partial}{\partial\alpha} G_{t,s,\alpha}(x)
		=  g_{t}(x) - g_{s}(x).
		\]
		For all (arbitrary but fixed) $\alpha \in [0,1]$,
		we compute the Jacobian determinant of
		$G_{t,s,\alpha}(x)$ with respect to $x$ and obtain
		\[ \mathcal{J}_x G_{t,s,\alpha}(x) = \sqrt{\det\left(
			(\alpha \nabla g_t(x) + (1 - \alpha) \nabla g_s(x))
			(\alpha \nabla g_t(x) + (1 - \alpha) \nabla g_s(x))^T\right).}
		\]
		\cref{lem:g_t} gives $\nabla g_\tau(x) \to I$ for $\tau \to 0$ uniformly for $x \in \R^d$.
		Consequently, we obtain uniformly for all $x \in \R^d$ that
		\[ \|I -\nabla G_{s,t,\alpha}(x)\| 
		\le \alpha \|I - \nabla g_s(x)\| + (1 - \alpha) \|I - \nabla g_t(x)\| \eqqcolon \delta,
		\]
		where $\delta$ can be made arbitrarily small also
		uniformly for $\alpha \in [0,1]$ by only allowing sufficiently
		small absolute values for $s$ and $t$. Thus by virtue of \cref{lem:derivative_arg} there exists $\varepsilon_{0} > 0$
		such that the function $G_{s,t,\alpha}$ is invertible for all $s$, $t \in (-\varepsilon_0,\varepsilon_0)$ and 
		all $\alpha \in [0,1]$. Moreover, after possibly reducing 
		$\varepsilon_0$ further, we obtain for all
		$s$, $t \in (-\varepsilon_0,\varepsilon_0)$ and all $x \in \R^d$ that
		\begin{gather}\label{eq:lb_on_Jacobian_determinant}
		\mathcal{J}_x G_{t,s,\alpha}(x) \ge 0.5.
		\end{gather}
		
		We continue our analysis of $d_\delta \coloneqq
		\int_{F} |u_\delta(g_{t}(x)) - u_\delta(g_{s}(x))|\dd x$
		and deduce
		\begin{align*}
		d_\delta
		&= \int_{F} |u_\delta(G_{t,s,1}(x)) - u_\delta(G_{t,s,0}(x))|\dd x &&\text{\small{Definition of $G_{s,t,\alpha}(x)$}}\\
		&= \int_{F} \left|
		\int_0^1\frac{\partial}{\partial \alpha}u_\delta(G_{t,s,\alpha}(x))\Big|_{\alpha=\beta} \dd\beta\right|\dd x
		&&\text{\small{Fundamental theorem of calculus}}\\
		&\le \int_{F} 
		\int_0^1 \left\|\nabla u_\delta(G_{t,s,\alpha}(x))\right\|\dd\alpha
		\left\|g_{t}(x) - g_{s}(x)\right\|\dd x. &&\text{\small{Chain rule and submultiplicativity}}
		\end{align*}
		By \cref{lem:g_t} there exists $C_2 > 0$ such that
		$\left\|g_{t}(x) - g_{s}(x)\right\| \le C_2 |t - s|$.
		We insert this inequality and apply Fubini's theorem to obtain
		the estimate
		\begin{gather}\label{eq:ddelta_Lipschitz_bound}
		d_\delta \le C_2|t - s|
		\int_0^1 \int_{F} 
		\left\|\nabla u_\delta(G_{t,s,\alpha}(x))\right\|\dd x \dd\alpha.
		\end{gather}
		Next, we observe
		\begin{gather}\label{eq:invariant_set}
		G_{t,s,\alpha}(\Omega) = \Omega
		\end{gather}
		To see this, we note that if $x \notin \supp \phi$, then $x = G_{t,s,\alpha}(x)$.
		Combining this with $\supp \phi \subset \Omega$, we obtain
		$G_{s,t,\alpha}(\Omega) = \Omega$ because $G_{s,t,\alpha}$ is invertible
		for all $s$, $t \in (-\varepsilon_0,\varepsilon_0)$ and all $\alpha \in [0,1]$.
		We apply the change of variables formula and the inverse function theorem
		to obtain
		\begin{align}
		\int_{F} 
		\left\|\nabla u_\delta(G_{t,s,\alpha}(x))\right\|\dd x
		&= \int_{F} 
		\frac{\left\|\nabla u_\delta(G_{t,s,\alpha}(x))\right\|}{\mathcal{J}_x G_{t,s,\alpha}(x)}\mathcal{J}_x G_{t,s,\alpha}(x)\dd x 
		&&\text{\small{$\cdot 1$}}\notag\\ 
		&= \int_{G_{t,s,\alpha}(F)}
		\frac{\left\|\nabla u_\delta(y)\right\|}{\mathcal{J}_xG_{t,s,\alpha}(G_{t,s,\alpha}^{-1}(y))} \dd y
		&&\text{\small{Area formula}}\notag\\
		&  
		\le 2 \int_{\Omega}\left\|\nabla u_\delta(y)\right\|\dd y.
		&&\text{\small{\eqref{eq:lb_on_Jacobian_determinant}, \eqref{eq:invariant_set}}}
		\label{eq:udelta_uniform_gradl1_bound}
		\end{align}

		We insert the
		estimate \eqref{eq:udelta_uniform_gradl1_bound} into \eqref{eq:ddelta_Lipschitz_bound} and pass to
		the limit $\delta \to 0$, which gives
		\[ d_\delta \le 2 C_2\int_{\Omega}\|\nabla u_\delta(y)\|\dd y|t - s|
		\,\underset{\eqref{eq:partition_of_unity}}\to
			2 C_2 |D\chi_E|(\Omega)|t - s|
			=\, 2 C_2P(E,\Omega)|t - s|.
		\]
		The claim follows by combining these considerations with the
		estimate $|\lambda(f_t(E) \cap F) - \lambda(f_s(E) \cap F)|
		\le \lim_{\delta\to 0}d_\delta$ and the choice $L\coloneqq
		2 C_2$.
	\end{proof}
		\begin{remark}
			From the proof of \cref{lem:Lipschitz_symmetric_difference} it can also be derived that
			\begin{align*}
			\lambda(f_t(E) \Delta f_s(E)) \leq L |t-s| P(E,\Omega)
			\end{align*}
			by using that $\lambda(f_t(E) \Delta f_s(E)) = \int_\Omega | \chi_E(g_t(x)) - \chi_E(g_s(x)) | \dd x$
			and continuing the proof from \eqref{eq:int} with the choice $F = \Omega$.
	\end{remark}
	\Cref{lem:Lipschitz_symmetric_difference} implies the 
	following lemma, which will be useful in the remainder.
	\begin{lemma}\label{lem:l1_diff_linearly_bounded_in_t_near_zero}
		Let $\{E_1,\ldots,E_M\}$ be a Caccioppoli partition of $\Omega$ and $v = \sum_{i=1}^M \nu_i \chi_{E_i}$.
		Let $(f_t)_{t\in(-\varepsilon,\varepsilon)}$ be the local variation
		defined by $f_t \coloneqq I +t \phi$ with $\phi \in C_c^\infty(\Omega, \R^d)$.
		Then there exist $\varepsilon_0 > 0$ and $C > 0$ such that 
		for all $t \in (-\varepsilon_0,\varepsilon_0)$ it holds that
		\[ \|f_t{^\#}v - v\|_{L^1} \le C|t|. \]
	\end{lemma}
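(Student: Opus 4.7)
The plan is to reduce the claim to the Lipschitz estimate for a single Caccioppoli set that was derived (or rather, noted in the remark) directly from \cref{lem:Lipschitz_symmetric_difference}, and then to sum over the $M$ level sets using the perimeter bound from \cref{lem:TV_of_BVV}.

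First I would expand $f_t^\# v - v = \sum_{i=1}^M \nu_i (\chi_{f_t(E_i)} - \chi_{E_i})$ by the definition of $f_t^\#$, apply the triangle inequality in $L^1$, and use the elementary identity $\|\chi_A - \chi_B\|_{L^1} = \lambda(A\Delta B)$ to obtain
\begin{equation*}
  \|f_t^\# v - v\|_{L^1} \;\le\; \sum_{i=1}^M |\nu_i| \, \lambda(f_t(E_i)\,\Delta\, E_i).
\end{equation*}

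Next I would invoke the remark immediately following \cref{lem:Lipschitz_symmetric_difference} with $s=0$, so that $f_0(E_i) = E_i$, to conclude that there exist $\varepsilon_0 > 0$ and $L > 0$ (independent of $i$) such that for all $t \in (-\varepsilon_0,\varepsilon_0)$,
\begin{equation*}
  \lambda(f_t(E_i)\,\Delta\, E_i) \;\le\; L\,|t|\,P(E_i,\Omega).
\end{equation*}
Plugging this into the previous estimate and bounding $|\nu_i| \le \nu_{\max} \coloneqq \max_j |\nu_j|$ yields
\begin{equation*}
  \|f_t^\# v - v\|_{L^1} \;\le\; L\,\nu_{\max}\,|t| \sum_{i=1}^M P(E_i,\Omega).
\end{equation*}

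Finally I would apply the perimeter bound \eqref{eq:Ek_has_finite_perimeter} in \cref{lem:TV_of_BVV} to estimate $\sum_{i=1}^M P(E_i,\Omega) \le 2\,\TV(v) < \infty$ (which is finite since $v \in \BVV(\Omega)$ by \cref{lem:TV_of_BVV_b}), and set $C \coloneqq 2L\,\nu_{\max}\,\TV(v)$. There is no real obstacle here; the only substantive ingredient is the Lipschitz bound of \cref{lem:Lipschitz_symmetric_difference}, and the delicate part is simply making sure that $\varepsilon_0$ is chosen so that the remark applies uniformly, which it does since $L$ and $\varepsilon_0$ there do not depend on the particular Caccioppoli set.
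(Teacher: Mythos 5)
Your proof is correct and follows essentially the same route as the paper: both reduce the claim to the Lipschitz estimate of \cref{lem:Lipschitz_symmetric_difference} applied with $s=0$ and then sum over the level sets using $\sum_{i=1}^M P(E_i,\Omega) \le 2\TV(v) < \infty$ from \eqref{eq:Ek_has_finite_perimeter}. The only (cosmetic) difference is that the paper writes $\|f_t^{\#}v - v\|_{L^1}$ exactly as $\sum_{i,j}|\nu_i-\nu_j|\,\lambda(f_t(E_i)\cap E_j)$ and bounds the intersections, whereas you bound the symmetric differences $\lambda(f_t(E_i)\Delta E_i)$ with coefficients $|\nu_i|$ via the remark; both yield a constant $C$ proportional to $L\,\TV(v)$.
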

	\begin{proof}
		\Cref{lem:Lipschitz_symmetric_difference} gives
		$L > 0$ and $\varepsilon_0 > 0$
		such that
		\[ \lambda(f_t(E_i) \cap E_j) = \lambda(f_t(E_i) \cap E_j) - \lambda(f_0(E_i) \cap E_j)
		\le L|t|P(E_i,\Omega) \]
		holds for all $t \in (-\varepsilon_0,\varepsilon_0)$ and
		all $i$, $j \in \{1,\ldots,M\}$.
		Because $\{E_1,\ldots,E_M\}$ and $\{f_{t}(E_1),\ldots,f_t(E_M)\}$ are 
		Caccioppoli partitions of $\Omega$, we insert the estimate above and obtain
		\begin{gather*}
			\|f_t^{\#}v - v\|_{L^1}
			= \sum_{i=1}^M\sum_{j=1}^M|\nu_i - \nu_j| \lambda(f_t(E_i) \cap E_j) 
			\le L|t| \sum_{i=1}^M P(E_i,\Omega)\sum_{j=1}^M|\nu_i - \nu_j|
			\end{gather*}
		for all $t \in (-\varepsilon_0,\varepsilon_0)$.
		We note that
		$P(E_i,\Omega) = \TV(\chi_{E_i}) = \Ha^{d-1}(\partial^* E_i \cap \Omega)
		= \sum_{j=1, j \neq i}^M \Ha^{d-1}(\partial^* E_i \cap \partial^* E_j)
		\le \TV(v) < \infty$
		because of $|\nu_i - \nu_j| \ge 1$ for $i \neq j$ and
		\eqref{eq:tv_identity}, which yields
		\[ \|f_t^{\#}v - v\|_{L^1}
		\le |t| L \TV(v) \sum_{i = 1}^M \sum_{j = 1}^M |\nu_i - \nu_j|. \]
	\end{proof}
	
	\section{Locally Optimal Solutions and First-order Optimality Conditions}\label{sec:sol}
	First we define $r$-optimality of feasible solutions for \eqref{eq:p}.
	\begin{definition}\label{dfn:roptimal}
		Let $v$ be feasible for \eqref{eq:p}.
		Then \emph{$v$ is $r$-optimal} for some $r > 0$ if
		\[ F(v) + \alpha \TV(v) \le F(\tilde{v}) + \alpha \TV(\tilde{v}) \]
		holds for all $\tilde{v}$ that are feasible for \eqref{eq:p}
		and satisfy $\|v - \tilde{v}\|_{L^1} \le r$.
	\end{definition}
	Clearly, $r$-optimality is a necessary condition
	for (global) optimality. In finite dimension, this 
	corresponds
	to so-called \emph{local minimizers} for mixed-integer
	optimization problems \cite{newby2015trust},
	where the optimal integer solution in a neighborhood is
	called a \emph{local minimizer}.
	In $\R^n$, $n \in \N$, there is always a small enough
	neighborhood around a feasible integer point
	that does not contain any further 
	feasible (integer) points, however. This is not true in our infinite-dimensional
	setting. On the contrary, the following example is
	generic.
	
	\begin{example}
		Let $M \ge 2$, 
		and $v \in \BVV(\Omega)$ with $V = [a,b] \cap \Z$, $a \leq b-1$. We may consider a ball
		$B \subset \Omega$ and
		construct $\hat{v} \in \BVV(\Omega)$ by setting
		\[ \hat{v}(x) \coloneqq \left\{
		\begin{aligned}
		\nu_{i + 1} & \text{ if } x \in B \text{ and } v(x) = \nu_i
		\text{ for } i < M, \\
		\nu_{M - 1} & \text{ if } x \in B \text{ and } v(x) = \nu_M, \\
		v(x) & \text{ if } x \in \Omega\setminus B.
		\end{aligned}
		\right.
		\]
		Then $\|v - \hat{v}\|_{L^1} = \lambda(B)$, which tends to zero
		when driving the radius of $B$ to zero.
	\end{example}
	
	In the one-dimensional case $d = 1$ a first-order optimality
	condition for $r$-optimal points follows from the variational
	argument that \emph{at a (local) minimizer $v^*$
		a small perturbation of any of the (finitely many)
		switching points of $v^*$ yields an increase of the objective}.
	Specifically, the derivative of the first term of the objective 
	with respect to the perturbation needs to be zero because
	the term $\alpha \TV$ is unaffected by small 
	perturbations of the switching locations.
	
	While there are no \emph{switching locations} for our
	case $d \ge 2$, we may consider boundaries between the 
	different level sets of
	a (local) minimizer $v^*$ instead. Thus the idea can be
	translated to $d \ge 2$ by perturbing the level sets by
	means of local variations. 
	
	We prove a first-order optimality condition for \eqref{eq:p}
	under the following assumption.
	\begin{assumption}\label{ass:hessian_regularity}
		Let
		$F : L^2(\Omega) \to \R$
		be twice continuously Fr\'{e}chet differentiable. For
		some $C > 0$ and all $\xi \in L^2(\Omega)$, let the bilinear form
		induced by the Hessian $\nabla^2 F(\xi) : L^2(\Omega)\times L^2(\Omega) \to \R$
		satisfy
		$|\nabla^2 F(\xi)(u,w)| \le C\|u\|_{L^1}\|w\|_{L^1}$
		for all $u$, $w \in L^2(\Omega)$.
	\end{assumption}
	As discussed in \cite{leyffer2021sequential},
	\cref{ass:hessian_regularity} means that $F$ improves 
	the 
	regularity of its input. Similar assumptions are present in 
	other works on discrete-valued control functions, see, e.g.,
	(5) in Lemma 3 and (10) in Theorem 2 in \cite{hahn2022binary} and 
	Assumption 3.1 3 in \cite{manns2022on}. The reason 
	is that for discrete-valued control functions $v$, $w$,
	reductions in the linear part of Taylor's expansion of $F$ 
	are only proportional to $\|w - v\|_{L^1}$. Moreover, $|V| < \infty$ implies $\|w - v\|_{L^1} = \Theta(\|w - v\|_{L^2}^2)$, 
	which gives that reductions
	in the linear part, proportional to $\|w - v\|_{L^1}$, 
	do not necessarily dominate the remainder term of the Taylor expansion.
	
	We define our concept of stationarity in \cref{dfn:l_stationarity} below.
	\begin{definition}\label{dfn:l_stationarity}
		Let $F$ satisfy \cref{ass:hessian_regularity}. Let $\{E_1,\ldots,E_M\}$
		be a Caccioppoli partition of $\Omega$,
		$v = \sum_{i=1}^M \nu_i \chi_{E_i}$, and $\nabla F(v) \in C(\bar{\Omega})$. 
		Then we say that $v$ is \emph{L-stationary}
		if the identity
		\begin{gather}\label{eq:p_variational_principle}
		\begin{multlined}
		\sum_{i=1}^M \nu_i\int_{\partial^*E_i\cap \Omega}
		(-\nabla F(v))(x)(\phi(x)\cdot n_{{E}_i}(x))\dd \Ha^{d-1}(x) = \\
		\alpha \sum_{i=1}^M \sum_{j=i + 1}^M|\nu_i - \nu_j|
		\int_{\partial^*E_i \cap \partial^* E_j}\bdvg{E_i} \phi(x)\dd \Ha^{d-1}(x).
		\end{multlined}
		\end{gather}
		holds for all $\phi \in C_c^\infty(\Omega; \R^d)$.
	\end{definition}
	\begin{remark}
		\Cref{dfn:l_stationarity}  means that on the intersection of the (essential)
		boundaries of $E_i$ and ${E}_j$,
		the set ${E}_i$ has distributional mean curvature of
		$\frac{\nu_i - \nu_j}{|\nu_i - \nu_j|}(-\nabla F)$.
	\end{remark}
	We prove our first-order optimality condition that\ $r$-optimal points are L-stationary below.
	\begin{theorem}\label{thm:P_stationarity}
		Let $F$ satisfy \cref{ass:hessian_regularity}.
		Let $\{E_1,\ldots,E_M\}$ be a Caccioppoli partition of $\Omega$,
		and $v = \sum_{i=1}^M \nu_i \chi_{E_i}$. 
		Let $\nabla F(v) \in C(\bar{\Omega})$.
		If $v$ is $r$-optimal for \eqref{eq:p} for some $r > 0$,
		then it is $L$-stationary. Morever, for all $i$, $j \in \{1,\ldots,M\}$ with $i \neq j$ we have
		\begin{gather}\label{eq:claimed_identity}
		\begin{multlined}
		(\nu_i - \nu_j)
		\int_{\partial^*E_i \cap \partial^* E_j}
		(-\nabla F(v))(x)(\phi(x) \cdot n_{E_i}(x))\dd \Ha^{d-1}(x)\\
		= \alpha |\nu_i - \nu_j| \int_{\partial^* E_i \cap \partial^* E_j} \bdvg{E_i}\phi(x)\dd\Ha^{d-1}(x).
		\end{multlined}
		\end{gather}
		for all $\phi\in C_c^\infty(E_i \cup E_j, \R^d)$.
	\end{theorem}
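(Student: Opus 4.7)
The plan is to test the $r$-optimality of $v$ against a one-parameter family of admissible comparators built from local variations. For arbitrary $\phi \in C_c^\infty(\Omega;\R^d)$, \cref{prp:elementary_local_variation_properties} produces a local variation $f_t \coloneqq I + t\phi$ for $|t|$ small. Setting $v_t \coloneqq f_t^{\#}v$, the corollary after \cref{lem:Taylor_TV_mg2} guarantees that $v_t \in \BVV(\Omega)$ (each $f_t$ is a diffeomorphism mapping the Caccioppoli partition $\{E_i\}$ to a Caccioppoli partition), and \cref{lem:l1_diff_linearly_bounded_in_t_near_zero} gives $\|v_t - v\|_{L^1} \le C|t|$. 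Hence $\|v_t - v\|_{L^1} \le r$ for $|t|$ sufficiently small, so by $r$-optimality $J(v) \le J(v_t)$ for all such $t$.

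Next I would Taylor-expand $J(v_t) - J(v)$ in $t$ and read off the first-order condition. The $\TV$-contribution is handled by \cref{lem:Taylor_TV_mg2}, which already produces the prescribed linear term plus an $O(t^2)$ remainder. For $F$ I would expand to second order around $v$: the linear term $(\nabla F(v), v_t - v)_{L^2}$ is rewritten via \cref{lem:Taylor_lin_mg2} as $t\sum_i \nu_i \int_{\partial^* E_i \cap \Omega} \nabla F(v)(\phi \cdot n_{E_i}) \,\dd\Ha^{d-1} + o(t)$, and the Hessian remainder $\tfrac12 \nabla^2 F(\xi)(v_t - v, v_t - v)$ is controlled via \cref{ass:hessian_regularity} and \cref{lem:l1_diff_linearly_bounded_in_t_near_zero} by $\tfrac C 2 \|v_t - v\|_{L^1}^2 = O(t^2)$. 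This Hessian estimate is the main technical obstacle: without \cref{ass:hessian_regularity}, the natural $L^2$ bound would only yield $O(\|v_t - v\|_{L^2}^2)$, and because the codomain $V$ is finite and integer-valued, $\|v_t - v\|_{L^2}^2$ and $\|v_t - v\|_{L^1}$ are comparable, so the remainder would be $O(|t|)$ and swamp the linear term.

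Combining both expansions, $0 \le J(v_t) - J(v) = t\, D(\phi) + o(t)$ with $D(\phi)$ the sum of the two linear contributions. Sending $t \downarrow 0$ and $t \uparrow 0$ forces $D(\phi) = 0$, which upon isolating the $\nabla F$-term is precisely \eqref{eq:p_variational_principle}; this establishes $L$-stationarity. For the pairwise identity \eqref{eq:claimed_identity}, I would specialize the just-proved variational principle to $\phi \in C_c^\infty(E_i \cup E_j, \R^d)$, extended by zero to $\Omega$: on every $\partial^* E_k \cap \partial^* E_l$ with $\{k,l\} \neq \{i,j\}$ the function $\phi$ and all its derivatives vanish (such intersections lie on the boundary of $E_i \cup E_j$, away from the support of $\phi$), so on the right of \eqref{eq:p_variational_principle} only the $(i,j)$ pair survives, and on the left only the $k=i$ and $k=j$ terms survive, restricted to $\partial^* E_i \cap \partial^* E_j$. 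Using $n_{E_j} = -n_{E_i}$ on the shared boundary combines the two left-hand contributions into the factor $(\nu_i - \nu_j)$, yielding \eqref{eq:claimed_identity}; the identity $\bdvg{E_i}\phi = \bdvg{E_j}\phi$ there (from the invariance of $n \cdot (\nabla\phi)\, n$ under $n \to -n$) confirms that the right-hand side is independent of the chosen orientation.
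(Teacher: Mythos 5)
Your proposal is correct and follows essentially the same route as the paper: the same local variation $f_t = I + t\phi$, the same use of \cref{lem:l1_diff_linearly_bounded_in_t_near_zero} together with \cref{ass:hessian_regularity} to reduce the Hessian remainder to $O(t^2)$, the Taylor expansions from \cref{lem:Taylor_TV_mg2,lem:Taylor_lin_mg2} for the linear terms, and the restriction to $\phi$ supported in $E_i \cup E_j$ with $n_{E_i} = -n_{E_j}$ for the pairwise identity. Your two-sided limit $t \downarrow 0$, $t \uparrow 0$ is just Fermat's theorem in disguise, which is exactly what the paper invokes.
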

	\begin{proof}
		Let $(f_t)_{t\in(-\varepsilon,\varepsilon)}$ be the local variation
		defined by $f_t \coloneqq I + t \phi$ for $\phi \in C_c^\infty(\Omega; \R^d)$.
		We first prove that the function $t \mapsto F(f_t^{\#}v) + \alpha \TV(f_t^{\#}v)$ is differentiable at $t = 0$.
		We recall from \cref{lem:l1_diff_linearly_bounded_in_t_near_zero} that there exist $\varepsilon_0 > 0$ and $\bar{C} > 0$ such that
		\[ \|f_t^{\#}v - v\|_{L^1} \le
		\bar{C} |t| \TV(v)
		\]
		holds for all $t \in (-\varepsilon_0,\varepsilon_0)$.
		Let $\xi^t \in L^2(\Omega)$ be in the line segment between
		$v$ and $f^{\#}_tv$. Then \cref{ass:hessian_regularity} gives
		\[ \left|\nabla^2 F(\xi^t)(f_t^{\#}v - v, f_t^{\#}v - v)\right|
		\le C \bar{C}^2 \TV(v)^2 t^2 \]
		with the constant $C > 0$ from \cref{ass:hessian_regularity}.
		Thus the left hand side of the above estimate is differentiable
		at $t = 0$ with derivative zero. Consequently, $t \mapsto F(f_t^{\#}v)$
		is differentiable at $t = 0$ with derivative
		\[ \frac{\dd}{\dd t} F(f_t^{\#}v)\Big|_{t = 0}
		= \frac{\dd}{\dd t}
		(\nabla F(v), f_t^{\#}v)_{L^2}\Big|_{t = 0} \]
		by virtue of Taylor's theorem applied to $F$,
		where the differentiability of the right hand side
			is due to \cref{lem:Taylor_lin_mg2}.
		\Cref{lem:Taylor_TV_mg2} implies that $t \mapsto (\nabla F(v), f_t^{\#}v)_{L^2}$ is differentiable at $t = 0$ as well.
		According to \Cref{lem:Taylor_TV_mg2,lem:Taylor_lin_mg2}, the derivatives are
		\begin{align*}
		\frac{\dd}{\dd t} (\nabla F(v), f_t^{\#}v)_{L^2}\Big|_{t = 0}
		&= \sum_{i=1}^M\nu_i
		\int_{\partial^*E_i  \cap \Omega} (\phi(x) \cdot n_{E_i}(x)) \nabla F(v)(x)
		\dd \Ha^{d-1}(x) \text{ and}\\
		\frac{\dd}{\dd t} \TV(f_t^{\#}v)\Big|_{t = 0}
		&= \sum_{i=1}^M \sum_{j=i + 1}^M|\nu_i - \nu_j|
		\int_{\partial^*E_i \cap \partial^* E_j}\bdvg{E_i} \phi(x)\dd \Ha^{d-1}(x).
		\end{align*}
		
		Consequently, the function $t \mapsto F(f_t^{\#}v) + \alpha \TV(f_t^{\#}v)$
		is differentiable at $t = 0$ and we obtain the first-order optimality condition
		\[ \frac{\dd}{\dd t} F(f_t^{\#}v) + \alpha \TV(f_t^{\#}v)\Big|_{t = 0} = 0 \]
		by virtue of Fermat's theorem and the $r$-optimality of $v$.
		
		Combining these equations yields the identity
		\begin{multline*}
		\sum_{i=1}^M \nu_i
		\int_{\partial^*{E}_i \cap \Omega} (-\nabla F(v)(x))(\phi(x) \cdot n_{{E}_i}(x))
		\dd \Ha^{d-1}(x)
		= \\
		\alpha
		\sum_{i=1}^M \sum_{j=i + 1}^M|\nu_i - \nu_j|
		\int_{\partial^*E_i \cap \partial^* E_j}\bdvg{E_i} \phi(x)\dd \Ha^{d-1}(x)   
		\end{multline*}
		for all $\phi \in C^\infty_c(\Omega;\R^d)$, which is L-stationarity
		of $v$.
		Restricting to $\phi \in C_c^\infty({E}_i \cup {E}_j,\R^d)$ 
		for $i \neq j$
		and using $n_{{E}_i} = -n_{{E}_j}$ on
		$\partial^*{E}_i \cap \partial^* {E}_j$ gives the
		second claim.
	\end{proof}
	
	\section{Sequential Linear Integer Programming Algorithm}\label{sec:algorithm}
	We introduce the trust-region algorithm in $\BVV(\Omega)$ as proposed
	in \cite{leyffer2021sequential} and its trust-region subproblems.
	We recall the trust-region subproblems, analyze
	their $\Gamma$-convergence with respect to the
	linearization point, and provide optimality conditions for the
	trust-region subproblem in \cref{sec:trsub}.
	Then we introduce the algorithm in \cref{sec:slip}.
	
	\subsection{Trust-region Subproblem}\label{sec:trsub}
	We study the following trust-region subproblem, in
	which the objective of \eqref{eq:p}
	is linearized partially and the $\TV$-term is considered exactly. 
	Let $\Delta > 0$ and $\bar{v} \in \BVV(\Omega)$. We study the problem
	\begin{gather}\label{eq:tr}
	\text{{\ref{eq:tr}}}(\bar{v}, g, \Delta) \coloneqq
	\left\{
	\begin{aligned}
	\min_{v \in L^2(\Omega)}\ & (g, v - \bar{v})_{L^2} + \alpha \TV(v)-\alpha \TV(\bar{v})\\
	\text{s.t.}\quad & \|v - \bar{v}\|_{L^1} \le \Delta\text{ and }v(x) \in V \text{ for a.a.\ } x \in \Omega,
	\end{aligned}
	\right.
	\tag{TR}
	\end{gather}
	where we are interested in the case $g = \nabla F(\bar{v})$.
	The remainder of this work frequently
	uses the fact that $\text{{\ref{eq:tr}}}(\bar{v},g,\Delta)$ admits a
	minimizer, which we recap below.
	\begin{proposition}\label{prp:tr_well_defined}
		Let $\bar{v} \in \BVV(\Omega)$, $g \in L^2(\Omega)$, $\Delta \ge 0$,
		and $\TV(\bar{v}) < \infty$.
		Then $\text{\emph{\ref{eq:tr}}}(\bar{v},g,\Delta)$ admits a minimizer.
	\end{proposition}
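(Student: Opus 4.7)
The plan is to carry out a direct method of calculus of variations argument in $\BV(\Omega)$, using \cref{lem:TV_of_BVV} to handle the integrality constraint.

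First I note that $\bar{v}$ itself is feasible (since $\Delta \ge 0$), so the feasible set is nonempty, and that any feasible $v$ satisfies $\|v\|_{L^\infty} \le \max_{i}|\nu_i|$ because $V$ is finite. This yields a uniform $L^\infty$- and $L^2$-bound on the feasible set, which implies that the linear term $(g, v - \bar v)_{L^2}$ is bounded (by Cauchy-Schwarz, using $g \in L^2(\Omega)$). Together with $\alpha \TV(v) \ge 0$ and the constant term $-\alpha\TV(\bar v) > -\infty$, this gives that the objective is bounded below, so the infimum is finite.

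Next I take a minimizing sequence $(v^n)_n$. Since the objective values converge to the infimum, they are bounded above, and rearranging gives a uniform bound on $\TV(v^n)$. Combined with the $L^\infty$-bound, this shows $(v^n)_n$ is bounded in $\BV(\Omega)$. By the $\BV$-compactness theorem (\cite[Thm.~3.23]{ambrosio2000functions}), after extracting a subsequence, $v^n \to v^*$ in $L^1(\Omega)$ and $v^n \weakstarto v^*$ in $\BV(\Omega)$ for some $v^* \in \BV(\Omega)$.

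It remains to verify that $v^*$ is feasible and attains the infimum. Feasibility of the integrality constraint $v^*(x) \in V$ a.e.\ follows from the sequential weak-$^*$ closedness of $\BVV(\Omega)$ provided by \cref{lem:TV_of_BVV}\,(a). The trust-region constraint $\|v^* - \bar v\|_{L^1} \le \Delta$ passes to the limit by continuity of the $L^1$-norm under $L^1$-convergence. For the objective, the $L^\infty$-bound together with $L^1$-convergence yields convergence in every $L^p$ with $p < \infty$, in particular in $L^2$, so $(g, v^n - \bar v)_{L^2} \to (g, v^* - \bar v)_{L^2}$; the lower semicontinuity $\TV(v^*) \le \liminf_n \TV(v^n)$ is a standard property of the total variation under weak-$^*$ (or $L^1$-) convergence, see \cite[Prop.~3.6]{ambrosio2000functions}. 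Adding these contributions shows that $v^*$ attains the infimum.

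The main obstacle is essentially bookkeeping rather than a deep difficulty: ensuring that the $\BVV$-constraint survives the weak-$^*$ limit is exactly the role of \cref{lem:TV_of_BVV}\,(a), and once the $\BV$-bound on a minimizing sequence is established, every other step is a standard invocation of compactness, continuity of the linear functional, and lower semicontinuity of $\TV$.
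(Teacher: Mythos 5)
Your argument is correct and complete. The paper itself offers no proof beyond citing Prop.~2.3 of \cite{leyffer2021sequential}, and the proof there is the same standard direct-method argument you give---nonempty feasible set, boundedness of the objective from below via the uniform $L^\infty$-bound, a $\TV$-bound along a minimizing sequence, $\BV$-compactness, sequential weak-$^*$ closedness of $\BVV(\Omega)$ from \cref{lem:TV_of_BVV}, and lower semicontinuity of $\TV$---so you have essentially reproduced the intended proof.
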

	\begin{proof}
		A proof can be found in \cite[Prop.\,2.3]{leyffer2021sequential}.
	\end{proof}
	The analyzed algorithmic framework in function space (see \cref{sec:slip}) produces convergent subsequences of integer-valued
	control functions. In particular, we prove that the mode of convergence is not only weak-$^*$ but strict in $\BV(\Omega)$
	(see \cref{sec:trm_analysis_mg2}). 
	We can obtain $\Gamma$-convergence of the trust-region subproblems with respect to strict 
	convergence of the linearization point when the trust-region radius is kept constant.
	This will be an ingredient of the convergence proof of
	the superordinate trust-region algorithm.
	
	We believe that this result is interesting in its own right 
	because it shows a flexibility to approximate (regularize) 
	$g$ in order to improve
	the solution process of the trust-region subproblems.  Specifically, we prove the
	following $\Gamma$-convergence result.
	\begin{theorem}\label{thm:tr_gamma_convergence}
		Let $d \ge 2$. Let $v^n \to v$ strictly in $\BVV(\Omega)$.
		Let $g^n \weakto g$ in $L^2(\Omega)$.
		Let $\Delta > 0$. Then the functionals $T^n : (\BVV(\Omega), \text{\emph{weak}-$^*$}) \to \R$ defined as
		\[ 
		T^n(w) \coloneqq (g^n, w - v^n)_{L^2} + \alpha \TV(w)-\alpha \TV(v^n) +
		\delta_{[0,\Delta]}(\|w - v^n\|_{L^1})
		\]
		for $w \in \BVV(\Omega)$, $\Gamma$-converge to $T :  (\BVV(\Omega), \text{\emph{weak}-$^*$}) \to \R$, defined as
		\[
		T(w) \coloneqq (g, w - v)_{L^2} + \alpha \TV(w)-\alpha \TV(v) +
		\delta_{[0,\Delta]}(\|w - v\|_{L^1})
		\]
		for $w \in \BVV(\Omega)$, where $\delta_{[0,\Delta]}$ is the $\{0,\infty\}$-valued indicator function of $[0,\Delta]$.
	\end{theorem}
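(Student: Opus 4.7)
The plan is to verify the two defining inequalities of $\Gamma$-convergence separately. For the $\liminf$-inequality, fix $w^n \weakstarto w$ in $\BVV(\Omega)$. Since $V$ is finite, all admissible functions are uniformly bounded in $L^\infty$, and so $L^1$-convergence of $w^n \to w$ and $v^n \to v$ upgrades to strong convergence in $L^2(\Omega)$. Combined with $g^n \weakto g$ in $L^2$, this yields $(g^n, w^n - v^n)_{L^2} \to (g, w - v)_{L^2}$. Strict convergence gives $-\alpha \TV(v^n) \to -\alpha \TV(v)$; the $L^1$-lower semicontinuity of $\TV$ yields $\liminf \alpha \TV(w^n) \ge \alpha \TV(w)$; and the continuity $\|w^n - v^n\|_{L^1} \to \|w - v\|_{L^1}$ together with the lower semicontinuity of $\delta_{[0,\Delta]}$ handles the indicator term. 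Summing produces $\liminf_n T^n(w^n) \ge T(w)$.

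For the $\limsup$-inequality, I distinguish three cases. If $\|w - v\|_{L^1} > \Delta$, then $T(w) = +\infty$ and any sequence is a recovery sequence. If $\|w - v\|_{L^1} < \Delta$, the constant sequence $w^n \equiv w$ works: then $\|w - v^n\|_{L^1} \to \|w - v\|_{L^1} < \Delta$ so the indicator eventually vanishes, and the remaining terms converge to the correct limits by the arguments from the $\liminf$ step, giving $T^n(w) \to T(w)$.

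The main case is $\|w - v\|_{L^1} = \Delta$, where the constant sequence may violate the trust-region constraint. Since $\Delta > 0$, the set $\{w \neq v\}$ has positive Lebesgue measure, so I can pick a Lebesgue-density-$1$ point $x_0 \in \{w \neq v\}$ with $B(x_0,r) \subset \Omega$ for all small $r > 0$. Define $w_r \in \BVV(\Omega)$ by $w_r \coloneqq v$ on $B_r \coloneqq B(x_0, r)$ and $w_r \coloneqq w$ on $\Omega \setminus B_r$. Then $\|w_r - v\|_{L^1} = \Delta - \int_{B_r} |w - v| \dd x < \Delta$ for sufficiently small $r$, where strict positivity of the integral follows from the density-$1$ property and $|w - v| \ge 1$ on $\{w \neq v\}$, and $\|w_r - w\|_{L^1} \le (\max V - \min V) \lambda(B_r) \to 0$. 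The trace formula for $\BV$-functions glued along the smooth surface $\partial B_r$ yields
\[ \TV(w_r) = |Dv|(B_r) + |Dw|(\Omega \setminus \bar{B}_r) + \int_{\partial B_r} |v^{\text{int}} - w^{\text{ext}}| \dd \Ha^{d-1}. \]
As $r \to 0$, the first summand tends to $0$ since $v$ is an $SBV$-function and $|Dv|$ is absolutely continuous with respect to $\Ha^{d-1}$, which assigns no mass to the point $x_0$ when $d \ge 2$; the second summand tends to $\TV(w)$ by the same reasoning applied to $w$; and the third summand is bounded by $(\max V - \min V) \Ha^{d-1}(\partial B_r) = O(r^{d-1}) \to 0$, again using $d \ge 2$. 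Hence $\TV(w_r) \to \TV(w)$, so $w_r \to w$ strictly in $\BV(\Omega)$.

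Strict convergence of $w_r \to w$ implies $T(w_r) \to T(w)$ as $r \to 0$ (the indicator is zero throughout since $\|w_r - v\|_{L^1} < \Delta$), and the easy subcase applied with fixed test function $w_r$ gives $T^n(w_r) \to T(w_r)$ as $n \to \infty$ for each $r$. A standard diagonal extraction over $r = 1/k$ produces indices $n_k \to \infty$ with $|T^{n_k}(w_{1/k}) - T(w_{1/k})| \le 1/k$; setting $w^n \coloneqq w_{1/k}$ for $n_k \le n < n_{k+1}$ defines a recovery sequence with $w^n \weakstarto w$ in $\BVV(\Omega)$ and $\limsup_n T^n(w^n) \le T(w)$. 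The key obstacle throughout is this boundary case $\|w - v\|_{L^1} = \Delta$, which forces the construction of a perturbation that simultaneously reduces the $L^1$-distance to $v$ and preserves $\TV$ in the limit; the ball-modification device achieves both precisely because the surface area of $\partial B_r$ scales like $r^{d-1}$ and vanishes for $d \ge 2$, mirroring the failure of $\Gamma$-convergence for $d=1$ noted in the introduction.
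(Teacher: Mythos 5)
Your proof is correct and follows the same overall strategy as the paper (lower bound via lower semicontinuity of $\TV$ and strong $L^2$-convergence of the integer-valued functions; upper bound via a three-case split with a density-point ball modification in the critical case $\|w-v\|_{L^1}=\Delta$), but the technical execution of the critical case differs in a way worth noting. The paper replaces $w$ by $v$ only on $D\cap B_{r_k}(\bar x)$, where $D=\{v=\nu_1\neq\nu_2=w\}$ is first shown to have finite perimeter; it then tracks how the two affected level sets change and bounds $\TV(w^n)-\TV(w)$ by $\Ha^{d-1}(\partial^* (D \cap B_{r_k}))$ using inclusions of essential boundaries (\cref{lem:boundary_union}, Federer's theorem) and the estimate (15.15) of Maggi, $\Ha^{d-1}(\partial^*(D\cap B_{r}))\le \Ha^{d-1}(\partial B_{r})+\Ha^{d-1}\mres\partial^* D(B_{r})$. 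You instead replace $w$ by $v$ on the whole ball and invoke the $\BV$ gluing/trace formula across the smooth hypersurface $\partial B_r$, which packages the entire $\TV$-estimate into $|Dv|(B_r)+\bigl(|Dw|(\Omega\setminus\bar B_r)-\TV(w)\bigr)+O(\Ha^{d-1}(\partial B_r))\to 0$; this avoids both the finite-perimeter argument for $D$ and the level-set bookkeeping, at the price of importing the trace theorem (and the routine observation that traces of $V$-valued $\BV$ functions are bounded by $\max V-\min V$). You also assemble the recovery sequence by a diagonal extraction over $r=1/k$ rather than the paper's explicit interleaving via the indices $N_k$; both are standard and equivalent here. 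In both arguments the mechanism that fails for $d=1$ is the same, namely $\Ha^{d-1}(\partial B_r)\to 0$, so your proof is a legitimate, somewhat more streamlined alternative to the paper's.
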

	\begin{proof}
		We start with the lower bound inequality, that is we need to show $T(w) \le \liminf_{n \to \infty} T^n(w^n)$
		for $w^n \weakstarto w$ in $\BVV(\Omega)$. Then $w^n \to w$ in $L^2(\Omega)$ and also
		$v^n \to v$ in $L^2(\Omega)$, implying $(g^n, w^n - v^n)_{L^2} \to (g, w - v)_{L^2}$.
		The strict convergence of $(v^n)_n$ and the weak-$^*$ convergence of $(w^n)_n$ imply
		\[ \alpha \TV(w) - \alpha \TV(v) \le \liminf_{n \to \infty} \alpha \TV(w^n) - \alpha \TV(v^n)  \]
		because $\TV$ is lower semi-continuous with respect to weak-$^*$ convergence in $\BV(\Omega)$.
		Thus the lower bound inequality holds true if 
		(after restricting to a subsequence) the implication
		\[ 
		\|w^n - v^n\|_{L^1} \le \Delta \text{ for all } n \in \N
		\quad\Longrightarrow\quad
		\|w - v\|_{L^1} \le \Delta
		\]
		holds. This follows from
		the triangle inequality, $w^n \to w$ in $L^1(\Omega)$, and $v^n \to v$ in $L^1(\Omega)$.
		
		Next, we need to show the upper bound inequality, that is we need to show that for each $w \in \BVV(\Omega)$
		there exists a sequence $w^n \weakstarto w$ in $\BVV(\Omega)$ such that
		$T(w) \ge \limsup_{n \to \infty} T^n(w^n)$. We distinguish
		three cases for the value of $\|w - v\|_{L^1}$.
		
		\textbf{Case $\|w - v\|_{L^1} > \Delta$:} Then $T(v) = \infty$ and $w^n \coloneqq w$ for all $n \in \N$
		give the claim.
		
		\textbf{Case $\|w - v\|_{L^1} < \Delta$:} Let $w^n \coloneqq w$ for all
		$n \in \N$. Then
		$(g^n, w^n - v^n)_{L^2} + \alpha \TV(w^n) - \alpha \TV(v^n) \to (g, w - v)_{L^2} + \alpha \TV(w) - \alpha \TV(v) + \delta_{[0,\Delta]}(\|w - v\|_{L^1})$.
		Moreover, the triangle inequality and $v^n \to v$ in $L^1(\Omega)$ imply
		$\|w - v^n\|_{L^1} < \Delta$ for all $n \in \N$ large enough and thus
		$\|w^n - v^n\|_{L^1} < \Delta$ because $w^n = w$. Combining these assertions
		we obtain $T^n(w^n) \to T(w)$.
		
		\textbf{Case $\|w - v\|_{L^1} = \Delta$:} The fact $\Delta > 0$ implies that there exists a set
			\[ D \coloneqq \{ x \in \Omega\,|\, v(x) = \nu_1 \neq \nu_2 = w(x) \}
			\]
			with $\lambda(D) > 0$, where the specific control realizations $\nu_1$ and $\nu_2$
			are without loss of generality because we may reorder the indices of the
			elements of $V$ if necessary.
			Moreover, $D$ is a set of finite perimeter because
			\begin{multline*}
			P(D,\Omega) 
			= P(v^{-1}(\{\nu_1\}) \cap w^{-1}(\{\nu_2\}),\Omega)
			\le \\ P(v^{-1}(\{\nu_1\}),\Omega) + P(w^{-1}(\{\nu_2\}),\Omega) \underset{\eqref{eq:Ek_has_finite_perimeter}}\le 2\TV(v) + 2\TV(w) < \infty,
			\end{multline*}
			where the first inequality follows from \cite[Prop.\ 3.38]{ambrosio2000functions}. Because $\lambda(D)>0$, there exists some point $\bar{x} \in D$ of density~$1$, i.e.,
			\begin{align*}
			\lim_{r \searrow 0} \frac{\lambda(D \cap B_r(\bar{x}))}{\lambda(B_r(\bar{x}))} = 1.
			\end{align*}
			This implies that there exists a monotonically decreasing sequence $(r_k)_{k \in \N}$ such that $0 < l_k \coloneqq \lambda(D \cap B_{r_k}(\bar{x}))$ defines a monotonically decreasing sequence $(l_k)_{k \in \N}$ with $l_k \searrow 0$ as $k \to \infty$. Since $v^n \to v$ in $L^1(\Omega)$, we deduce that there exist $N_k \in \N$
			such that
			\begin{gather*}
			\begin{aligned}
			\|v - v^n \|_{L^1} \leq l_k &\quad \text{for all } n \geq N_k \text{ and} \\
			N_k < N_{k+1}
			\end{aligned}    
			\end{gather*}
			hold for all $k \in \N$.
			Let $n \in \N$ with $n \geq N_1$. Then $n \in [N_k,N_{k+1})$ for some $k \in \N$ and we define
			\[ w^n(x) \coloneqq \left\{ 
			\begin{aligned}
			v(x) &\text{ if } x \in D \cap B_{r_k}(\bar{x}),\\
			w(x) &\text{ else}
			\end{aligned}
			\right.
			\]
			for all $x \in \Omega$. We deduce
			\[ \|w^n - v^n\|_{L^1}
			\le \|v - v^n\|_{L^1}
			+ \|w^n - v\|_{L^1}
			\le l_k + \Delta - |\nu_1 - \nu_2|l_k \le \Delta, \]
			where we have used that $|\nu_1 - \nu_2| \ge 1$.
			The construction gives $w^n \weakstarto w$ in 
			$\BVV(\Omega)$.
			It remains to show $\TV(w^n) \to \TV(w)$.
			
			To see this let $E_i \coloneqq w^{-1}(\{\nu_i\})$, $E_i^n \coloneqq (w^n)^{-1}(\{\nu_i\})$
			for $i \in \{1,\ldots,M\}$ and $n \in \N$, and $D^n \coloneqq D \cap B_{r_k}(\bar{x})$ for $n \in \N$
			and the corresponding $k$ that depends on $n$ as above. There hold $E_1^n = E_1 \cup D^n$, $E_2^n = E_2 \setminus D^n$, $E_i^n = E_i$ for $i \geq 3$, and therefore also
			\begin{align*}
			\partial^* E_1^n \subset\partial^e E_1 \cup \partial^e D^n \quad \text{ and }
			\quad \partial^* E_2^n \subset \partial^e E_2 \cup \partial^e D^n,
			\end{align*}
			where the inclusions follow from \Cref{lem:boundary_union}.
			This yields
			\begin{align*}
			\partial^* E_1^n \cap \partial^* E_2^n \subset (\partial^e E_1 \cup \partial^e D^n) \cap (\partial^e E_2 \cup \partial^e D^n ) = (\partial^e E_1 \cap \partial^e E_2) \cup \partial^e D^n
			\end{align*}
			and, analogously,
			\begin{align*}
			\partial^* E_1^n \cap \partial^* E_i^n
			\subset (\partial^e E_1 \cap \partial^e E_i) \cup \partial^e D^n
			\enskip\text{and}\enskip
			\partial^* E_2^n \cap \partial^* E_i^n
			\subset (\partial^e E_2 \cap \partial^e E_i) \cup \partial^e D^n
			\end{align*}
			for $i \geq 3$. We deduce
			\begin{align*}
			\Ha^{d-1}(\partial^* E_1^n \cap \partial^* E_2^n) & \leq \Ha^{d-1}((\partial^e E_1 \cap \partial^e E_2) \cup \partial^e D^n ) \\
			& = \Ha^{d-1}((\partial^*E_1 \cap \partial^* E_2 ) \cup \partial^* D^n ) 
			&& {\text{\small{\cite[Thm. 3.61]{ambrosio2000functions}}}}
			\\
			& \leq \Ha^{d-1}(\partial^*E_1 \cap \partial^* E_2 ) + \Ha^{d-1}(\partial^* D^n)
			\end{align*}
			and, analogously,
			\begin{align*}
			\Ha^{d-1}(\partial^* E_1^n \cap \partial^* E_i^n) \leq \Ha^{d-1}(\partial^*E_1 \cap \partial^* E_i ) + \Ha^{d-1}(\partial^* D^n)
			\end{align*}
			and
			\begin{align*}
			\Ha^{d-1}(\partial^* E_2^n \cap \partial^* E_i^n) \leq \Ha^{d-1}(\partial^*E_2 \cap \partial^* E_i ) + \Ha^{d-1}(\partial^* D^n).
			\end{align*}
			Then
			\begin{align*}
			\TV(w^n) &\underset{\eqref{eq:tv_identity}}= \sum_{i = 1}^M\sum_{\ell = i + 1}^M |\nu_i - \nu_\ell|\Ha^{d-1}(\partial^* E_i^n \cap \partial^* E_\ell^n)\\
			& \leq \sum_{i=1}^2 \sum_{\ell = i+1}^M | \nu_i - \nu_\ell | (\Ha^{d-1}(\partial^* E_i \cap \partial^* E_{\ell} ) + \Ha^{d-1}(\partial^* D^n)) \\
			& + \sum_{i=3}^M \sum_{\ell = i+1}^M | \nu_i - \nu_\ell | \Ha^{d-1}(\partial^* E_i \cap \partial^* E_{\ell} ) \\
			& = \TV(w) + \sum_{i=1}^2 \sum_{\ell = i+1}^M | \nu_i - \nu_\ell | \Ha^{d-1}(\partial^* D^n) \\
			& \leq \TV(w) + 2 M \nu_{\max} \, \Ha^{d-1}(\partial^*D^n) \\
			& \leq \TV(w) + 2 M \nu_{\max} \, (\Ha^{d-1}(\partial B_{r_k}(\bar{x})) + \Ha^{d-1} \mres \partial^* D ( B_{r_k}(\bar{x})))
			\end{align*}
			with $\nu_{\max} \coloneqq \max_{i,\ell \in \{1,\dots,M\}} |\nu_i - \nu_{\ell} | \geq 1$ and where the last inequality follows from (15.15) in \cite{maggi2012sets}.
			Because the measure $\Ha^{d-1} \mres \partial^* D$ is a finite Radon measure, the term
			$\Ha^{d-1} \mres \partial^* D ( B_{r_k}(\bar{x}))$ tends to zero as $n$
			and the corresponding $k$ tend to infinity. Moreover, $\Ha^{d-1}(\partial B_{r_k}(\bar{x}))$ tends to zero as well.
			Together with the lower semicontinuity of $\TV$ we obtain $\TV(w^n) \to \TV(w)$.
	\end{proof}
	\begin{corollary}
		Let $F : L^2(\Omega) \to \R$ be continuously Fr\'{e}chet differentiable. Then the claimed $\Gamma$-convergence holds
		with the choice $g^n = \nabla F(v^n)$.
	\end{corollary}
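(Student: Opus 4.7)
The plan is to reduce this corollary to a direct application of \cref{thm:tr_gamma_convergence}. The only hypothesis of that theorem that is not already guaranteed by the setup is the weak convergence $g^n \weakto g$ in $L^2(\Omega)$, so everything boils down to verifying this for the choice $g^n = \nabla F(v^n)$ and $g = \nabla F(v)$.

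First, I would upgrade the mode of convergence of the linearization points. Strict convergence in $\BVV(\Omega)$ entails $v^n \to v$ in $L^1(\Omega)$ by definition, and since all $v^n$ and $v$ take values in the finite integer set $V$, the differences $v^n - v$ are uniformly bounded in $L^\infty(\Omega)$ by $\max_{i,j}|\nu_i - \nu_j|$. Combining the uniform $L^\infty$ bound with the $L^1$ convergence via $\|v^n - v\|_{L^2}^2 \le \|v^n - v\|_{L^\infty} \|v^n - v\|_{L^1}$ yields $v^n \to v$ strongly in $L^2(\Omega)$.

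Next, because $F : L^2(\Omega) \to \R$ is continuously Fréchet differentiable, the Riesz representation $\nabla F : L^2(\Omega) \to L^2(\Omega)$ is a continuous map. Therefore $\nabla F(v^n) \to \nabla F(v)$ strongly in $L^2(\Omega)$, which in particular gives $\nabla F(v^n) \weakto \nabla F(v)$ in $L^2(\Omega)$.

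With $g^n \coloneqq \nabla F(v^n)$ and $g \coloneqq \nabla F(v)$, all hypotheses of \cref{thm:tr_gamma_convergence} are satisfied, and the $\Gamma$-convergence assertion follows. No step here is an obstacle; the only small point worth stating explicitly is the passage from $L^1$ to $L^2$ convergence of the iterates, which relies crucially on the finiteness of $V$ and would not hold for general $\BV(\Omega)$ inputs.
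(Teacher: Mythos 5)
Your proof is correct and follows essentially the same route as the paper: strict convergence of $v^n$ gives $L^2$-convergence (via the uniform bound from the finite value set $V$), and continuity of $\nabla F$ then yields the required weak convergence of $g^n = \nabla F(v^n)$ so that \cref{thm:tr_gamma_convergence} applies. You merely spell out the $L^1$-to-$L^2$ upgrade more explicitly than the paper does, which is a welcome clarification rather than a deviation.
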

	\begin{proof}
		This follows from \cref{thm:tr_gamma_convergence} because $v^n \to v$ strictly in $\BVV(\Omega)$
		implies that $v^n \to v \in L^2(\Omega)$, which in turn implies $\nabla F(v^n) \to \nabla F(v^n)$ in $L^2(\Omega)$.
	\end{proof}
	The proof of the lower bound inequality of \cref{thm:tr_gamma_convergence} 
	applies for the case $d = 1$ as well. The proof of the upper bound inequality
	uses $\Ha^{d-1}(\partial B_{r_k}(x)) \searrow 0$ for $r_k \searrow 0$,
	which requires $d > 1$. In fact, the
	claim is not true for $d = 1$, as is demonstrated below.
	
	\begin{example}\label{ex:tr_ubie}
		Let $\Omega = (-2,2) \subset \R$. Let $V = \{0, 1\}$.
		$w = 0_{BV(\Omega)}$.
		Let $v = \chi_{[-1,1]}$. Let $\Delta = 2$. Let $v^n = \chi_{\left[-1-1/n,1+1/n\right]}$.
		Then $v^n \to v$ strictly in $\BVV(\Omega)$.
		Let $g^n \weakto g$ in $L^2(\Omega)$. Let
		$T^n$ for $n \in \N$, $T$ be defined as in \cref{thm:tr_gamma_convergence}.
		Moreover, $\|v - w\|_{L^1} = \Delta$ and
		$\|v^n - w\|_{L^1} = 2 + 2/n > \Delta$.
		We need to approximate $w$ by some $w^n$ such that
		$\|v^n - w^n\|_{L^1} \le \Delta$. Let
		$\delta^n \coloneqq 2/n$.
		
		The smallest value $\TV(w^n)$ can attain in this case
		is $\TV(w^n) = 2$---by choosing, for example, $w^n = \chi_{[-\delta^n/2,\delta^n/2]}$---because $\TV(w^n) = 0$
		if and only if $w^n$ is constant on all of $\Omega$.
		We obtain 
		$\limsup_{n\to\infty} T^n(w^n) \ge 2\alpha > 0 = \alpha \TV(w)$ for any sequence $w^n \weakstarto w$ in $\BV(\Omega)$
		such that $\|v^n - w^n\|_{L^1} \le \Delta$.
		Thus, the upper bound inequality is violated.
	\end{example}
	
	Next, we apply \cref{thm:P_stationarity} to $\text{\ref{eq:tr}}(v,g,\Delta)$, which also asserts
	that $v$ is L-stationary as well if it solves
	$\text{{\ref{eq:tr}}}(v,\nabla F(v),\Delta)$.
	\begin{proposition}\label{prp:TRset_stationarity}
		Let $g \in C(\bar{\Omega})$. Let $\Delta > 0$.
		Let $\{E_1,\ldots,E_M\}$ be a Caccioppoli partition of
		$\Omega$, and $v = \sum_{i=1}^M \nu_i \chi_{{E}_i}$.
		If $v$ is $r$-optimal for $\text{\emph{\ref{eq:tr}}}(v,g,\Delta)$ for some
		$r > 0$, then for all $\phi \in C_c^\infty(\Omega;\R^d)$ it holds that
		\begin{gather*}
		\begin{multlined}
		\sum_{i=1}^M \nu_i\int_{\partial^*{E}_i \cap \Omega} (-g)\phi\cdot n_{{E}_i}\dd \Ha^{d-1}
		= \alpha \sum_{i=1}^M \sum_{j=i + 1}^M|\nu_i - \nu_j|
		\int_{\partial^*E_i \cap \partial^* E_j}\bdvg{E_i} \phi\dd \Ha^{d-1}
		\end{multlined}
		\end{gather*}
		In particular, it holds for all $i$, $j \in \{1,\ldots,M\}$ with
		$i \neq j$ that
		\begin{gather*}
		\begin{multlined}
		(\nu_i - \nu_j) \int_{\partial^*{E}_i \cap \partial^* E_j} (-g)\phi\cdot n_{{E}_i}\dd \Ha^{d-1}
		= \alpha |\nu_i - \nu_j|
		\int_{\partial^*E_i \cap \partial^* E_j}\bdvg{E_i} \phi\dd \Ha^{d-1}
		\end{multlined}
		\end{gather*}
		for all $\phi\in C_c^\infty(E_i \cup E_j, \R^d)$.
	\end{proposition}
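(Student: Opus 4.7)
The plan is to mirror the proof of \cref{thm:P_stationarity}, but with two simplifications: the linear term in the trust-region objective is already exact, so \cref{ass:hessian_regularity} and the Taylor expansion of $F$ are not needed, and the only additional ingredient required is to verify that the trust-region constraint (and the $r$-ball) is inactive along a local variation emanating from $v$.

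I would fix an arbitrary $\phi \in C_c^\infty(\Omega;\R^d)$ and let $(f_t)_{t\in(-\varepsilon,\varepsilon)}$ be the local variation $f_t \coloneqq I + t\phi$ supplied by \cref{prp:elementary_local_variation_properties}. By construction $f_t^{\#}v \in \BVV(\Omega)$, so the integrality constraint is satisfied automatically. \Cref{lem:l1_diff_linearly_bounded_in_t_near_zero} yields constants $\varepsilon_0>0$ and $C>0$ with $\|f_t^{\#}v - v\|_{L^1} \le C|t|$ for $|t|<\varepsilon_0$, so by possibly shrinking $\varepsilon_0$ we can ensure both $\|f_t^{\#}v - v\|_{L^1} \le \Delta$ and $\|f_t^{\#}v - v\|_{L^1} \le r$ for all $t \in (-\varepsilon_0,\varepsilon_0)$. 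Hence $f_t^{\#}v$ is feasible for $\text{\ref{eq:tr}}(v,g,\Delta)$ and lies in the $r$-neighborhood in which $v$ is optimal.

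The key step is then to differentiate the trust-region objective along $t\mapsto f_t^{\#}v$. By \cref{lem:Taylor_lin_mg2}, $t\mapsto (g, f_t^{\#}v - v)_{L^2}$ is differentiable at $t=0$ with derivative $\sum_{i=1}^M \nu_i \int_{\partial^*E_i\cap\Omega} g(x)(\phi(x)\cdot n_{E_i}(x))\dd\Ha^{d-1}(x)$, and by \cref{lem:Taylor_TV_mg2}, $t\mapsto \alpha \TV(f_t^{\#}v) - \alpha\TV(v)$ is differentiable at $t=0$ with derivative $\alpha \sum_{i=1}^M\sum_{j=i+1}^M|\nu_i-\nu_j|\int_{\partial^*E_i\cap\partial^*E_j}\bdvg{E_i}\phi(x)\dd\Ha^{d-1}(x)$. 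The $r$-optimality of $v$ for $\text{\ref{eq:tr}}(v,g,\Delta)$ combined with Fermat's theorem forces the sum of these derivatives to vanish; rearranging produces the first claimed identity.

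The second identity follows by specializing $\phi \in C_c^\infty(E_i\cup E_j,\R^d)$ for a fixed $i\neq j$: because $\phi$ is supported away from reduced boundaries between pairs other than $(i,j)$, only the $(i,j)$-contribution survives in both double sums, and the relation $n_{E_i} = -n_{E_j}$ on $\partial^*E_i\cap\partial^*E_j$ collapses the first sum to a single factor $(\nu_i-\nu_j)$, yielding the stated equality. I do not foresee any substantial obstacle; the only delicate point is ensuring inactivity of the trust-region constraint along the family $f_t^{\#}v$, and this is exactly what \cref{lem:l1_diff_linearly_bounded_in_t_near_zero} provides.
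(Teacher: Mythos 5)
Your argument is correct and is essentially the paper's: the paper obtains the result as an immediate corollary of \cref{thm:P_stationarity} by setting $F(u) \coloneqq (g,u)_{L^2}$ (which satisfies \cref{ass:hessian_regularity} with vanishing Hessian) and passing to some $r_0 \le \Delta$ so that only points feasible for the trust-region subproblem are considered, whereas you re-run the underlying variational argument of that theorem directly. Both routes rest on the same ingredients---feasibility of $f_t^{\#}v$ via \cref{lem:l1_diff_linearly_bounded_in_t_near_zero}, the derivatives from \cref{lem:Taylor_TV_mg2,lem:Taylor_lin_mg2}, and Fermat's theorem---so there is nothing to object to.
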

	\begin{proof}
		If $v$ is $r$-optimal for $\text{{\ref{eq:tr}}}(v,g,\Delta)$ for some
		$r > 0$, then it is $r_0$-optimal for some $r_0 \le \Delta$,
		implying that only feasible points for $\text{{\ref{eq:tr}}}(v,g,\Delta)$
		are considered.
		We choose $F(u) \coloneqq (g,u)_{L^2}$, $u \in L^2(\Omega)$, which gives
		$\nabla F(u) = g$
		and $\nabla^2 F(u) = 0$ and in turn that 
		\cref{ass:hessian_regularity} is satisfied.
		Then the claim follows from \cref{thm:P_stationarity}
		with $r_0$ for $r$.
	\end{proof}
	
	\subsection{Algorithm Statement}\label{sec:slip}
	We propose to solve \eqref{eq:p} for $r$-optimal points
	or stationary points with \cref{alg:trm} \cite{leyffer2021sequential}
	below. The algorithm iterates over two nested loops.
	An outer iteration completes (the inner loop terminates) when a 
	new iterate has been
	computed successfully or the optimal objective value
	of the trust-region subproblem \eqref{eq:tr} is zero.
	In the latter case the algorithm terminates, else the trust-region
	radius is reset and the inner loop is triggered again.
	The inner loop solves \eqref{eq:tr}
	for shrinking trust-region radii
	until the predicted reduction,
	the negative objective of \eqref{eq:tr}, is less or equal than zero (necessary condition for $r$-optimality)
	or a sufficient decrease condition is met.
	The latter means that the solution of \eqref{eq:tr} is accepted as
	the next iterate. To this end, we
	use the acceptance criterion (sufficient decrease condition) \cite{manns2022on}
	\begin{gather}\label{eq:accept}
	\ared(v^{n-1},\tilde{v}^{n,k}) \ge \sigma \pred(v^{n-1},\Delta^{n,k})
	\end{gather}
	for some $\sigma \in (0,1)$, where
	\[ \ared(v^{n-1},\tilde{v}^{n,k})
	\coloneqq F(v^{n-1}) + \alpha \TV(v^{n-1})
	- F(\tilde{v}^{n,k}) - \alpha \TV(\tilde{v}^{n,k})
	\]
	is the reduction achieved by the solution $\tilde{v}^{n,k}$
	of the trust-region subproblem, and 
	\[ \pred(v^{n-1},\Delta^{n,k})
	\coloneqq (\nabla F(v^{n-1}),v^{n - 1} - \tilde{v}^{n,k})
	+ \alpha \TV(v^{n-1}) - \alpha \TV(\tilde{v}^{n,k})
	\]
	is the predicted reduction by the (negative objective of the) trust-region 
	subproblem for the current trust-region radius and thus its solution 
	$\tilde{v}^{n,k}$.
	
	\begin{algorithm}[t]
		\caption{Sequential linear integer programming method (SLIP)}\label{alg:trm}
		\textbf{Input:} $F$ sufficiently regular, $\Delta^0 > 0$, $v^0 \in \BVV(\Omega)$, $\sigma \in (0,1)$.
		
		\begin{algorithmic}[1]
			\For{$n = 0,\ldots$}
			\State $k \gets 0$
			\State $\Delta^{n,0} \gets \Delta^0$
			\While{not sufficient decrease according to \eqref{eq:accept}}\label{ln:suffdec}
			\State\label{ln:trstep} $\tilde{v}^{n,k} \gets$ minimizer of \eqref{eq:tr}
			with $\Delta = \Delta^{n,k}$, $v = v^{n-1}$, and $g = \nabla F(v^{n-1})$.
			\State\label{ln:pred} $\pred(v^{n-1},\Delta^{n,k}) \gets (\nabla F(v^{n-1}), v^{n-1} - \tilde{v}^{n,k})_{L^2}
			+ \alpha \TV(v^{n-1}) - \alpha \TV(\tilde{v}^{n,k})$
			\State $\ared(v^{n-1},\tilde{v}^{n,k}) \gets F(v^{n-1}) + \alpha \TV(v^{n-1})
			- F(\tilde{v}^{n,k}) - \alpha\TV(\tilde{v}^{n,k})$	
			\If{$\pred(v^{n-1},\Delta^{n,k}) \le 0$}
			\State Terminate. The predicted reduction for $v^{n-1}$
			is zero.
			\ElsIf{not sufficient decrease according to \eqref{eq:accept}}
			\State $k \gets k + 1$		
			\State $\Delta^{n,k} \gets \Delta^{n,k-1} / 2$.
			\Else
			\State $v^n \gets \tilde{v}^{n,k}$
			\EndIf
			\EndWhile	
			\EndFor
		\end{algorithmic}
	\end{algorithm}
	
	\section{Asymptotics of \cref{alg:trm}}\label{sec:trm_analysis_mg2}
	
	We analyze the asymptotics of the function space algorithm
	\cref{alg:trm} under \cref{ass:hessian_regularity},
		see also \cite[Ass.\ 4.1]{leyffer2021sequential}.
	The Hessian regularity assumed therein is already
	required for \cref{thm:P_stationarity}.
	We analyze the inner loop
	in \cref{sec:inner} and the outer loop in \cref{sec:outer},
	which gives that all cluster points produced by
	\cref{alg:trm} are L-stationary.
	
	\subsection{Analysis of the Inner Loop of \cref{alg:trm}}\label{sec:inner}
	We use local variations to obtain
	a sufficient decrease in the trust-region subproblem that eventually
	implies acceptance of a step in case of violation of L-stationarity.
	In light of trust-region methods this can be interpreted
	as local variations providing Cauchy points for the $\phi$ that violate L-stationarity.
	We start with a preparatory lemma about feasibility.
	
	\begin{lemma}\label{lem:tr_feasibility}
		Let $g \in C(\bar{\Omega})$, and $\Delta > 0$.
		Let $\{E_1,\ldots,E_M\}$ be a Caccioppoli partition of $\Omega$,
		and $v = \sum_{i=1}^M \nu_i \chi_{E_i}$ with $\TV(v) < \infty$.
		Let $(f_t)_{t\in(-\varepsilon,\varepsilon)}$ be the
		local variation defined by $f_t \coloneqq I+t\phi$ for $\phi \in C_c^\infty(\Omega; \R^d)$.
		Then there exist $\varepsilon_1 > 0$ such that $f_t^{\#}v$
		is feasible for $\text{\emph{\ref{eq:tr}}}(v,g,\Delta)$ for all $t \in (-\varepsilon_1,\varepsilon_1)$.
	\end{lemma}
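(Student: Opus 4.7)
The plan is to verify the two feasibility requirements in the definition of \eqref{eq:tr} separately: the integrality constraint $f_t^{\#}v(x)\in V$ for a.a.\ $x\in\Omega$, and the trust-region constraint $\|f_t^{\#}v - v\|_{L^1}\le \Delta$. Neither requires new machinery; both follow from facts already established earlier in the excerpt.

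For the integrality constraint, since $f_t$ is a diffeomorphism of $\R^d$, the images $f_t(E_1),\ldots,f_t(E_M)$ form a partition of $\Omega$ (using that $f_t$ is the identity outside the support of $\phi$, which is compactly contained in $\Omega$, so that $f_t(\Omega)=\Omega$). Therefore $f_t^{\#}v=\sum_{i=1}^M \nu_i\chi_{f_t(E_i)}$ takes values in $V=\{\nu_1,\ldots,\nu_M\}$ almost everywhere, which handles the discrete constraint for every $t$ in the domain of the local variation.

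For the trust-region constraint, I invoke \cref{lem:l1_diff_linearly_bounded_in_t_near_zero} directly: there exist $\varepsilon_0>0$ and $C>0$ such that $\|f_t^{\#}v - v\|_{L^1}\le C|t|$ for all $t\in(-\varepsilon_0,\varepsilon_0)$. Choosing $\varepsilon_1\coloneqq\min\{\varepsilon_0,\Delta/C,\varepsilon\}>0$ (using $\Delta>0$) then yields $\|f_t^{\#}v - v\|_{L^1}\le \Delta$ for all $t\in(-\varepsilon_1,\varepsilon_1)$. Combining the two points gives feasibility for $\text{\ref{eq:tr}}(v,g,\Delta)$ on $(-\varepsilon_1,\varepsilon_1)$.

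There is no real obstacle here; the lemma is a bookkeeping consequence of the Lipschitz-in-$t$ bound from \cref{lem:l1_diff_linearly_bounded_in_t_near_zero}, together with the trivial observation that applying a diffeomorphism to a partition of $\Omega$ preserves the partition structure and hence the integrality of $f_t^{\#}v$. The only minor point to keep track of is that $\varepsilon_1$ must also be taken small enough to lie within the interval on which $(f_t)_t$ is a local variation; this is handled by intersecting with $(-\varepsilon,\varepsilon)$.
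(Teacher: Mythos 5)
Your proof is correct and follows essentially the same route as the paper's: invoke \cref{lem:l1_diff_linearly_bounded_in_t_near_zero} to get $\|f_t^{\#}v - v\|_{L^1}\le C|t|$ and set $\varepsilon_1 = \min\{\varepsilon_0,\Delta/C\}$. Your additional explicit verification of the integrality constraint (and the intersection with $(-\varepsilon,\varepsilon)$) is a harmless bit of extra care that the paper leaves implicit.
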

	\begin{proof}
		We consider $\varepsilon_0 > 0$ and $C > 0$
		that are asserted in 
		\cref{lem:l1_diff_linearly_bounded_in_t_near_zero}. We
		choose $\varepsilon_1 \coloneqq \min\{{\varepsilon}_0, \Delta/{C}\}$.
		Then $f_t^{\#}v$ is feasible for $\text{{\ref{eq:tr}}}(v,g,\Delta)$
		for all $t\in(-\varepsilon_1,\varepsilon_1)$.
	\end{proof}
	
	In the following lemma, we leave out the outer iteration index $n$ for better clarity, that is we abbreviate $v \coloneqq v^{n-1}$ and $\tilde{v}^k \coloneqq \tilde{v}^{n,k}$.
	\begin{lemma}\label{lem:step_accept}
		Let \cref{ass:hessian_regularity} hold.
		Let $\sigma \in (0,1)$, $v = \sum_{i=1}^M \nu_i \chi_{E_i}$
		for a Caccioppoli partition $\{E_1,\ldots,E_M\}$ of $\Omega$,
		$\nabla F(v) \in C(\bar{\Omega})$, $\Delta^k \searrow 0$,
		and $\tilde{v}^k$ minimize $\text{\emph{\ref{eq:tr}}}(v,\nabla F(v),\Delta^{k})$ for $k \in \N$.
		Then at least one of the following statements holds true.
		\begin{enumerate}
			\item\label{itm:step_accept_v_Lstationary} The function $v$ is L-stationary.
			\item\label{itm:step_accept_v_optimal} There exists $k_0 \in \N$ such that
			the objective of $\text{\emph{\ref{eq:tr}}}(v,\nabla F(v), \Delta^{k_0})$
			at $v^{k_0}$ is zero.
			\item\label{itm:step_accept_v_sufficient_decrease} There exists $k_0 \in \N$
			such that \eqref{eq:accept} holds, that is
			$\ared(v, v^{k_0})
			\ge \sigma \pred(v,\Delta^{k_0})$.
		\end{enumerate}
	\end{lemma}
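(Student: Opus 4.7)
The plan is to argue by contraposition: assuming that $v$ is not L-stationary and that the predicted reduction never vanishes, I will establish the sufficient-decrease condition. The core idea is to construct an explicit feasible test point for $\text{\ref{eq:tr}}(v, \nabla F(v), \Delta^k)$ via a local variation of the level sets of $v$, extract a lower bound on $\pred$ that is linear in $\Delta^k$, and then use \cref{ass:hessian_regularity} to ensure that the $\ared$--$\pred$ discrepancy is quadratic and therefore dominated.

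First I would select a descent direction. Since $v$ is not L-stationary, some $\phi \in C_c^\infty(\Omega;\R^d)$ violates \eqref{eq:p_variational_principle}; after possibly replacing $\phi$ by $-\phi$, the linear coefficient
\[ \beta \coloneqq \sum_{i=1}^M \nu_i \int_{\partial^*E_i \cap \Omega} \nabla F(v)(x)\, \phi(x)\cdot n_{E_i}(x)\, \dd \Ha^{d-1}(x) + \alpha \sum_{i=1}^M\sum_{j=i+1}^M |\nu_i - \nu_j| \int_{\partial^*E_i \cap \partial^*E_j} \bdvg{E_i}\phi(x)\, \dd \Ha^{d-1}(x) \]
satisfies $\beta < 0$. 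Setting $f_t \coloneqq I + t\phi$, \cref{lem:l1_diff_linearly_bounded_in_t_near_zero} together with \cref{lem:tr_feasibility} yields constants $\varepsilon_0, C_0 > 0$ such that $f_t^{\#} v$ is feasible for $\text{\ref{eq:tr}}(v, \nabla F(v), \Delta^k)$ for every $|t| \le t^k \coloneqq \min(\varepsilon_0, \Delta^k/C_0)$; in particular $t^k = \Delta^k/C_0$ for all $k$ sufficiently large.

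Next I would combine \cref{lem:Taylor_lin_mg2} and \cref{lem:Taylor_TV_mg2} evaluated at $t = t^k$ to obtain
\[ (\nabla F(v), f_{t^k}^{\#} v - v)_{L^2} + \alpha \TV(f_{t^k}^{\#} v) - \alpha \TV(v) = \beta\, t^k + o(t^k). \]
Since $\tilde v^k$ minimizes $\text{\ref{eq:tr}}(v, \nabla F(v), \Delta^k)$ and $f_{t^k}^{\#} v$ is admissible, $\pred(v, \Delta^k) \ge -\beta\, t^k - o(t^k)$. If $\pred(v, \Delta^{k_0}) \le 0$ for some $k_0$, the second alternative of the lemma is established; otherwise $\pred(v, \Delta^k) \ge (|\beta|/(2C_0))\, \Delta^k$ for $k$ large. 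Applying Taylor's theorem to $F$ with \cref{ass:hessian_regularity} and $\|\tilde v^k - v\|_{L^1} \le \Delta^k$ gives some $\xi^k$ on the segment between $v$ and $\tilde v^k$ for which
\[ \bigl| \ared(v, \tilde v^k) - \pred(v, \Delta^k) \bigr| = \tfrac{1}{2} \bigl| \nabla^2 F(\xi^k)(\tilde v^k - v, \tilde v^k - v) \bigr| \le \tfrac{C}{2} \|\tilde v^k - v\|_{L^1}^2 \le \tfrac{C}{2} (\Delta^k)^2, \]
so the ratio $\ared/\pred$ converges to $1$ and eventually exceeds $\sigma$, yielding the third alternative.

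The main obstacle I expect is securing this linear-in-$\Delta^k$ feasibility window in the second step: only then does the first-order descent contribution $|\beta|\, t^k$ to $\pred$ remain of order $\Delta^k$ and dominate the quadratic $O((\Delta^k)^2)$ Hessian error; the Lipschitz-type estimate of \cref{lem:l1_diff_linearly_bounded_in_t_near_zero} is exactly what supplies this scaling, converting qualitative non-L-stationarity into the Cauchy-style quantitative decrease needed for acceptance.
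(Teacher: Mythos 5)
Your proposal is correct and follows essentially the same route as the paper's proof: a Cauchy-point construction $f_{t^k}^{\#}v$ with $t^k$ proportional to $\Delta^k$ (via \cref{lem:tr_feasibility,lem:l1_diff_linearly_bounded_in_t_near_zero}), the first-order expansions of \cref{lem:Taylor_lin_mg2,lem:Taylor_TV_mg2} to get a linear-in-$\Delta^k$ lower bound on $\pred$, and the Hessian bound from \cref{ass:hessian_regularity} to make the $\ared$--$\pred$ gap quadratic. The only cosmetic differences are that you argue via the ratio $\ared/\pred \to 1$ while the paper bounds $\ared - \sigma\pred$ directly, and that the paper dispatches outcome 2 by noting it implies outcome 1 via \cref{prp:TRset_stationarity} rather than through the sign of $\pred$.
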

	\begin{proof}
		Let $k \in \N$. Then the objective of $\text{{\ref{eq:tr}}}(v,\nabla F(v), \Delta^{k})$
		evaluated at $v$ is zero. Consequently, if the objective value of $\tilde{v}^k$,
		which is optimal, is zero (Outcome \ref{itm:step_accept_v_optimal})
		it follows that $v$ is a minimizer of
		$\text{{\ref{eq:tr}}}(v,\nabla F(v), \Delta^{k})$
		and \cref{prp:TRset_stationarity} implies
		that $v$ is L-stationary (Outcome \ref{itm:step_accept_v_Lstationary}).
		Therefore, in order to prove the claim, it remains to show that Outcome 
		\ref{itm:step_accept_v_sufficient_decrease} holds under the assumption
		that Outcome \ref{itm:step_accept_v_Lstationary} does not.
		We abbreviate
		\[ \delta^k \coloneqq \ared(v,\tilde{v}^k)
		\quad\text{and}\quad
		p^k\coloneqq \pred(v,\Delta^k).
		\]
		It remains to prove that there exists some $k_0 \in \N$ such that
		$\delta^{k_0} \ge \sigma p^{k_0}$.
		
		Because $v$ is not L-stationary, there exist
		$\phi \in C_c^\infty(\Omega;\R^d)$ with
		$\supp \phi \neq \emptyset$ and $\eta > 0$ such that
		\[
		\begin{multlined}
		\sum_{i=1}^M \nu_i\int_{\partial^*{E}_i \cap \Omega} (-\nabla F(v))(x)(\phi(x)\cdot n_{{E}_i}(x))\dd \Ha^{d-1}(x)\\
		- \alpha \sum_{i=1}^M \sum_{j=i + 1}^M|\nu_i - \nu_j|
		\int_{\partial^*E_i \cap \partial^* E_j}\bdvg{E_i} \phi(x)\dd \Ha^{d-1}(x)
		> \eta. 
		\end{multlined}
		\]
		Let $(f_t)_{t\in(-\varepsilon,\varepsilon)}$ be the local variation defined by $f_t \coloneqq I + t \phi$. \Cref{lem:tr_feasibility} implies that there exists
		$\varepsilon^k \in (0,\varepsilon)$ such that $f_t^{\#} v$ is feasible for
		$\text{{\ref{eq:tr}}}(v,\nabla F(v),\Delta^k)$ for all $t \in [0,\varepsilon^k]$. Moreover,
		the construction of $\varepsilon_1$ in the proof of \cref{lem:tr_feasibility}
		gives that we may always choose $\varepsilon^k > 0$ so that
		$\|f^{\#}_{\varepsilon^k}v - v\|_{L^1} = \Delta^k$ holds for all 
		$k$ sufficiently large.
		
		\Cref{ass:hessian_regularity} allows us to apply Taylor's theorem to $F(\tilde{v}^k)$. We deduce
		\[ \delta^k
		= p^k + \frac{1}{2}\nabla^2 F(\xi^k)(\tilde{v}^k - v, \tilde{v}^k - v)
		\ge p^k  - \frac{C}{2}\| \tilde{v}^k - v\|_{L^1}^2
		\]
		for some $\xi^k \in L^2(\Omega)$ in the line segment between $\tilde{v}^k$ and $v$
		and the $C > 0$ from \cref{ass:hessian_regularity}.
		We deduce for $t \in (0,\varepsilon^k]$
		\begin{align*}
		\delta^k &\ge \sigma p^k
		-(1- \sigma)\left((\nabla F(v), \tilde{v}^k - v)_{L^2} + \alpha \TV(\tilde{v}^k) - \alpha \TV(v)\right)
		- \frac{C}{2}\| \tilde{v}^k - v\|_{L^1}^2 \\
		&\ge \sigma p^k -(1 - \sigma)\left(
		(\nabla F(v), f_{\varepsilon^k}^{\#}v - v)_{L^2} + \alpha \TV(f_{\varepsilon^k}^{\#}v) - \alpha \TV(v)\right)
		- \frac{C}{2}\| \tilde{v}^k - v\|_{L^1}^2 \\
		&\ge \sigma p^k + (1-\sigma)\left(\varepsilon^k \eta + o(\varepsilon^k)\right) - \frac{C}{2}\|\tilde{v}^k - v\|_{L^1}^2 \\
		&\ge \sigma p^k + (1-\sigma)\left(\varepsilon^k \eta + o(\varepsilon^k)\right) - \frac{C}{2}\kappa^2 (\varepsilon^k)^2
		\end{align*}
		for some $\kappa > 0$. In particular, the second inequality follows from the optimality of
		$\tilde{v}^k$ for $\text{{\ref{eq:tr}}}(v,\nabla F(v), \Delta^{k})$,
		the third inequality follows from 
		\cref{lem:Taylor_TV_mg2,lem:Taylor_lin_mg2}, and the fourth from
			\[  -\|\tilde{v}^k - v\|_{L^1}^2
			\ge -(\Delta^k)^2 = -\|f_{\varepsilon_k}^{\#}v - v\|_{L^1}^2 \ge - \kappa^2 \varepsilon_k^2,
			\]
			where the existence of $\kappa$ is asserted by
			\cref{lem:l1_diff_linearly_bounded_in_t_near_zero}. Because $(1-\sigma)\varepsilon^k \eta > 0$ and $(1-\sigma) \varepsilon^k \eta$ eventually dominates
			the terms $(1-\sigma)o(\varepsilon^k)$ and $-\tfrac{C}{2}\kappa^2 (\varepsilon^k)^2$, there exists
			$k_0 \in \N$ such that $\delta^{k_0} \ge \sigma p^{k_0}$.
	\end{proof}
	
	\begin{corollary}\label{cor:step_accept}
		Let \cref{ass:hessian_regularity} hold. Let $v^{n-1}$ produced by \cref{alg:trm} 
		satisfy $\nabla F(v^{n-1}) \in C(\bar{\Omega})$. Then iteration
		$n$ satisfies one of the following outcomes.
		\begin{enumerate}
			\item The inner loop terminates after finitely many
			iterations and 
			\begin{enumerate}
				\item the sufficient decrease condition \eqref{eq:accept} is satisfied or
				\item the predicted reduction
				is zero (and the iterate $v^{n-1}$ is L-stationary).
			\end{enumerate}
			\item The inner loop does not terminate, and the iterate $v^{n-1}$ 
			is L-stationary.
		\end{enumerate}
	\end{corollary}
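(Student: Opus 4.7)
The plan is a direct case analysis on whether the inner loop at iteration $n$ terminates in finitely many steps, feeding the outcomes of \cref{lem:step_accept} into the termination criteria of \cref{alg:trm}. The inner loop of \cref{alg:trm} contains exactly two exit points: the check $\pred(v^{n-1},\Delta^{n,k}) \le 0$ (returning $v^{n-1}$ unchanged) and the sufficient decrease test \eqref{eq:accept} (accepting $\tilde{v}^{n,k}$ as the next iterate). So if the inner loop terminates after finitely many iterations, it terminates in one of these two branches, corresponding directly to cases 1(b) and 1(a) of the corollary.

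In case 1(a) there is nothing further to show. In case 1(b) the additional claim is L-stationarity of $v^{n-1}$. To establish it, I would note that $v^{n-1}$ is itself feasible for $\text{\ref{eq:tr}}(v^{n-1},\nabla F(v^{n-1}),\Delta^{n,k})$ with objective value zero. The optimal objective value is therefore nonpositive, and the hypothesis $\pred(v^{n-1},\Delta^{n,k}) \le 0$ forces it to be exactly zero, so $v^{n-1}$ is itself a minimizer of the subproblem. Then \cref{prp:TRset_stationarity} (applied with $g = \nabla F(v^{n-1})$, which lies in $C(\bar{\Omega})$ by hypothesis) immediately yields L-stationarity.

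For non-termination of the inner loop, I would use that the update rule in \cref{alg:trm} halves $\Delta^{n,k}$ after every failed acceptance, so $\Delta^{n,k} \searrow 0$. Applying \cref{lem:step_accept} with $v = v^{n-1}$ and the sequence $(\Delta^{n,k})_k$ delivers three alternatives. Alternatives \ref{itm:step_accept_v_optimal} and \ref{itm:step_accept_v_sufficient_decrease} each assert the existence of an index $k_0$ at which one of the two exit conditions of the inner loop would be triggered, contradicting non-termination. Hence only alternative \ref{itm:step_accept_v_Lstationary} remains, which gives L-stationarity of $v^{n-1}$ and thus outcome~2 of the corollary.

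The heavy lifting has already been done in \cref{lem:step_accept}, so the proof reduces to a short bookkeeping argument. The only slightly subtle step is the justification in case 1(b) that $\pred \le 0$ together with the trivially feasible test point $v^{n-1}$ forces $v^{n-1}$ to solve the subproblem so that \cref{prp:TRset_stationarity} becomes applicable; I expect this to be the only place where one has to think rather than mechanically translate between the lemma's outcomes and the algorithm's exit conditions.
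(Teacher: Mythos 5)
Your proposal is correct and follows essentially the same route as the paper, whose entire proof is the single sentence ``apply \cref{lem:step_accept} with $\Delta^k = \Delta^{n,k}$ and $v = v^{n-1}$''; your additional justification for case 1(b) (that $\pred \le 0$ together with the feasibility of $v^{n-1}$ at objective value zero forces $v^{n-1}$ to minimize the subproblem, so \cref{prp:TRset_stationarity} applies) is exactly the argument the paper records in the opening paragraph of the proof of \cref{lem:step_accept}.
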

	\begin{proof}
		We apply \cref{lem:step_accept} with the choices $\Delta^k = \Delta^{n,k}$ and $v = v^{n-1}$.
	\end{proof}
	
	\subsection{Analysis of the Outer Loop}\label{sec:outer}
	With these preparations we are able to prove that the limits of the sequence
	of iterates are L-stationary under \cref{ass:hessian_regularity}.
	\begin{theorem}\label{thm:pure_tr_asymptotics}
		Let $F$ be bounded below.
		Let \cref{ass:hessian_regularity} hold. Let the iterates $(v^n)_n$ be produced by \cref{alg:trm}.
		Let $\nabla F(v^n) \in C(\bar{\Omega})$ for all $n \in \N$.
		Then all iterates are feasible for \eqref{eq:p} and 
		the sequence of objective values $(J(v^n))_n$
		is monotonically decreasing.
		Moreover, one of the following mutually exclusive outcomes holds:
		\begin{enumerate}
			\item\label{itm:finite_seq_tr} The sequence $(v^n)_n$ is finite.
			The final element $v^N$ of $(v^n)_n$ solves the
			trust-region subproblem $\text{\emph{\ref{eq:tr}}}(v^N,\nabla F(v^N),\Delta)$
			for some $\Delta > 0$ and is L-stationary.
			\item\label{itm:finite_seq_tr_contract} The sequence 
			$(v^n)_n$ is finite and the inner loop does not terminate
			for the final element $v^N$, which is L-stationary.
			\item\label{itm:infinite_seq_sl} The sequence $(v^n)_n$ has a weak-$^*$ accumulation
			point in $\BV(\Omega)$. Every weak-$^*$ accumulation point of $(v^n)_n$
			is feasible, and strict. If $v$ is a weak-$^*$ accumulation point of
			$(v^n)_n$ that satisfies $\nabla F(v) \in C(\bar{\Omega})$, then it is L-stationary.
			
			If the trust-region radii are bounded away from zero for a subsequence $(v^{n_\ell})_\ell$, that is,
			if $0 < \underline{\Delta} \coloneqq \liminf_{n_\ell\to\infty} \min_{k}\Delta^{n_{\ell}+1,k}$
			and $\bar{v}$ is a weak-$^*$ accumulation point of $(v^{n_\ell})_\ell$ with $\nabla F(\bar{v}) \in C(\bar{\Omega})$, then
			$\bar{v}$ solves $\text{\emph{\ref{eq:tr}}}(\bar{v},\nabla F(\bar{v}),\underline{\Delta} / 2)$.
		\end{enumerate}
	\end{theorem}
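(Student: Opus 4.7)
I would dispatch the easy claims first, then the two finite-sequence outcomes, and spend most of the work on the infinite-sequence outcome. Feasibility of the iterates is built into the subproblem \eqref{eq:tr}, while $(J(v^n))_n$ is strictly decreasing whenever a step is accepted: the sufficient decrease condition \eqref{eq:accept} is only checked after the algorithm has ruled out $\pred \leq 0$, so $J(v^{n-1}) - J(v^n) = \ared \geq \sigma\, \pred > 0$. Outcome \ref{itm:finite_seq_tr} follows because termination with $\pred \leq 0$ forces $\pred = 0$ (since $v^N$ itself realizes subproblem objective value zero), so $v^N$ solves \eqref{eq:tr} and \cref{prp:TRset_stationarity} yields L-stationarity; outcome \ref{itm:finite_seq_tr_contract} is exactly the second alternative of \cref{cor:step_accept}.

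For outcome \ref{itm:infinite_seq_sl} set $J^\star \coloneqq \lim_n J(v^n)$, which exists because $F$ is bounded below and $(J(v^n))_n$ is non-increasing. The bound $\alpha\TV(v^n) \leq J(v^0) - \inf F$ together with the uniform $L^\infty$-bound from $V$ being finite makes $(v^n)_n$ bounded in $\BV(\Omega)$ and yields a weakly-$^\ast$ convergent subsequence $v^{n_k} \weakstarto v$; feasibility $v \in \BVV(\Omega)$ follows from \cref{lem:TV_of_BVV}(a). Since $V$ is bounded, $v^{n_k} \to v$ in $L^2(\Omega)$ and continuity of $F$ yields $F(v^{n_k}) \to F(v)$; combined with $J(v^{n_k}) \to J^\star$ this gives $\alpha\TV(v^{n_k}) \to J^\star - F(v)$ and, by weak-$^\ast$ lower semicontinuity of $\TV$, $J(v) \leq J^\star$. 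Strict convergence is equivalent to $J(v) = J^\star$, which I would prove by contradiction: assuming $J(v) < J^\star$, the function $v$ itself becomes admissible for $\text{\ref{eq:tr}}(v^{n_k}, \nabla F(v^{n_k}), \Delta^0)$ once $\|v - v^{n_k}\|_{L^1} \leq \Delta^0$, and the subproblem objective evaluated at $v$ tends to $J(v) - J^\star < 0$, so the predicted reduction at the initial inner-loop radius is bounded below by $(J^\star - J(v))/2$ eventually. A Cauchy-point analysis of the inner loop modeled on the proof of \cref{lem:step_accept} then propagates this gap into a uniform positive lower bound on the accepted predicted reductions at iteration $n_k + 1$, contradicting the summability $\sum_n \sigma\, \pred \leq J(v^0) - J^\star < \infty$ obtained by telescoping \eqref{eq:accept}.

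L-stationarity of $v$ is proved by the same scheme: if $v$ violated \eqref{eq:p_variational_principle} for some $\phi \in C_c^\infty(\Omega;\R^d)$ with slack $\eta > 0$, then combining the now-established strict convergence with the Taylor expansions of \cref{lem:Taylor_TV_mg2,lem:Taylor_lin_mg2} applied at $v^{n_k}$ furnishes Cauchy points $f_t^{\#}v^{n_k}$ for the inner loop at $v^{n_k}$ with a uniform first-order descent, again forcing the accepted predicted reductions to stay bounded away from zero. The bounded-radii assertion is the cleanest step: once $\min_k \Delta^{n_\ell+1, k} \geq \underline\Delta/2$ holds along the subsequence, monotonicity of the predicted reduction in the radius forces the predicted reduction of $\text{\ref{eq:tr}}(v^{n_\ell}, \nabla F(v^{n_\ell}), \underline\Delta/2)$ to tend to zero, so its optimal value tends to zero as well; \cref{thm:tr_gamma_convergence} applied at the fixed radius $\underline\Delta/2$ (legitimate now that strict convergence is in hand) identifies $\bar v$ as a minimizer of $\text{\ref{eq:tr}}(\bar v, \nabla F(\bar v), \underline\Delta/2)$.

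The main obstacle is the strict-convergence step, since \cref{thm:tr_gamma_convergence} presupposes strict convergence of the linearization points and so cannot be used to establish it. The argument must instead propagate the hypothetical gap $J^\star - J(v) > 0$ through the inner loop by direct estimates: \cref{lem:l1_diff_linearly_bounded_in_t_near_zero} to bound the $L^1$-displacement of local-variation trial points, \cref{lem:Taylor_TV_mg2,lem:Taylor_lin_mg2} to compute the first-order change of the subproblem objective, and \cref{ass:hessian_regularity} to control the gap between $\ared$ and $\pred$, all while the inner-loop radius $\Delta^{n_k+1, k}$ may shrink with $k$; obtaining uniformity of the Cauchy-point constants across the $v^{n_k}$ is the technical crux.
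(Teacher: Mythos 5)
Your architecture tracks the paper's proof closely: the easy claims and the two finite outcomes via \cref{cor:step_accept} and \cref{prp:TRset_stationarity}, compactness and weak-$^*$ closedness of $\BVV(\Omega)$ for accumulation points, strictness by contradiction with the limit as competitor, L-stationarity via local variations, and \cref{thm:tr_gamma_convergence} for the bounded-radius case. Two steps, however, are not carried by the tools you name.

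For strictness, you correctly put the competitor $v$ into the subproblem and extract the $O(1)$ gap $J^\star - J(v)$, but you only claim a lower bound on the predicted reduction \emph{at the initial radius} $\Delta^0$ and then invoke ``a Cauchy-point analysis modeled on \cref{lem:step_accept}'' to reach the accepted radius. That is the wrong mechanism: the local-variation Cauchy points of \cref{lem:step_accept} produce a decrease of order $t\eta$ with $t$ proportional to the current radius, so the bound they give on the accepted predicted reduction shrinks with the inner loop and cannot contradict boundedness of $J$. What closes the argument (and what the paper does) is that $v$ \emph{remains} feasible for $\text{\ref{eq:tr}}(v^{n_\ell},\nabla F(v^{n_\ell}),\Delta^{n_\ell+1,k})$ for every fixed $k$ once $\|v - v^{n_\ell}\|_{L^1}$ is small, so the predicted reduction stays above a constant multiple of the $\TV$-gap $\alpha\delta$ uniformly down to a fixed small radius $\Delta^{n_\ell+1,k_0}$; at that radius the model error $|\ared - \pred|$ is already small enough (estimate \eqref{eq:pred_estimate_wrt_sigma_and_delta}) that the ratio test forces acceptance by iteration $k_0$ with an $O(1)$ actual reduction.

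The more serious gap is in the L-stationarity step. You take the Cauchy points $f_t^{\#}v^{n_k}$, i.e., local variations of the \emph{iterates}, which requires the first-order terms of \cref{lem:Taylor_TV_mg2,lem:Taylor_lin_mg2} evaluated on the partitions of $v^{n_k}$ to be uniformly close to the slack $\eta$ computed on the partition of the limit $v$. You flag this uniformity as the crux, and it is a genuine obstruction: the integrals $\int_{\partial^* E_i^{n_k}\cap\partial^* E_j^{n_k}}\bdvg{E_i^{n_k}}\phi\,\dd\Ha^{d-1}$ depend on the reduced boundaries and normals of the level sets of $v^{n_k}$, and neither weak-$^*$ nor strict convergence gives control of these anisotropic boundary integrals without substantial extra work, nor do the cited lemmas provide remainder constants uniform over the iterates. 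The paper avoids this entirely by using the single competitor $f_t^{\#}v$ built from the limit: it is feasible for $\text{\ref{eq:tr}}(v^{n_\ell},\nabla F(v^{n_\ell}),\Delta^*)$ because $\|f_t^{\#}v - v\|_{L^1}\le\Delta^*/2$ and $\|v^{n_\ell}-v\|_{L^1}\le\Delta^*/2$, and its decrease is transferred to the iterates' subproblems through the continuity estimates \eqref{eq:continuity_of_tr_obj_terms}, so every Taylor expansion is performed at the fixed $v$. As written, your plan stalls exactly where you say it does; replacing the competitor repairs it.
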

	\begin{proof}
		Our preparations allows us to reuse parts of the proof
		of the case $d = 1$ for the case
		$d \ge 2$ without any change other than the definition of L-stationarity. We summarize these
		parts of the proof briefly and elaborate on the arguments that differ in this work.
		
		The facts that \cref{alg:trm} produces a sequence of feasible iterates $(v^n)_n$ with corresponding
		montonotically decreasing sequence of objective function values $(J(v^n))_n$ follow exactly as
		for the case $d = 1$, see \cite[Proof of Theorem 4.23]{leyffer2021sequential}. As in
		\cite[Proof of Theorem 4.23]{leyffer2021sequential} we may restrict to the case that
		Outcomes \ref{itm:finite_seq_tr} and \ref{itm:finite_seq_tr_contract}
		do not hold true and prove Outcome \ref{itm:infinite_seq_sl} in this case
		(substitute \cite[Lemma 4.19]{leyffer2021sequential} by \cref{lem:step_accept} in the respective
		argument). We split the proof that Outcome \ref{itm:infinite_seq_sl} holds into four parts.
		
		\textbf{Outcome \ref{itm:infinite_seq_sl} (1) existence and feasibility of weak-$^*$ accumulation points:}
		This follows exactly as in \cite[Proof of Theorem 4.23]{leyffer2021sequential} using that $\BVV(\Omega)$ is
		sequentially closed in the weak-$^*$ topology of $\BV(\Omega)$.
		
		\textbf{Outcome \ref{itm:infinite_seq_sl} (2) weak-$^*$ accumulation points are strict:}
		We follow the idea of a contradictory argument from
		\cite[Proof of Theorem 4.23]{leyffer2021sequential}
		and assume that there exists a weak-$^*$ accumulation point $v$ of $(v^n)_n$ with $v^{n_\ell} \weakstarto v$
		in $\BVV(\Omega)$ such that $\TV(v) < \liminf_{\ell \to \infty} \TV(v^{n_\ell})$, that is the
		convergence is not strict. We define $\delta \coloneqq \tfrac{1}{2} (\liminf_{\ell \to \infty}\TV(v^{n_\ell}) - \TV(v))$
		but cannot assume the inequality $\delta \ge \tfrac{1}{2}$ for our case $d \ge 2$.
		By virtue of continuity of $F$ and $\nabla F$ (both following from \cref{ass:hessian_regularity})
		with respect to convergence in $L^2(\Omega)$ (thus also to weak-$^*$ convergence in $\BV(\Omega)$ in the closed subset $\BVV(\Omega)$)
		and the fact that $\Delta^{n_\ell+1,k} \to 0$
		for $k \to \infty$ (independently of $\ell$) we obtain that there exists $\ell_0 \in \N$
		and $k_0 \in \N$ such that for all $\ell \ge \ell_0$ and $k \ge k_0$
		\begin{align}
		|(\nabla F(v^{n_\ell}), v^{n_\ell} - \tilde{v})_{L^2}| &\le \frac{1-\sigma}{3-\sigma}\alpha\delta\quad\text{and}\quad
		|F(v^{n_\ell}) - F(\tilde{v})| \le \frac{1-\sigma}{3-\sigma}\alpha\delta
		\label{eq:pred_estimate_wrt_sigma_and_delta}
		\end{align}
		hold for all $\tilde{v}$ that are feasible for $\text{{\ref{eq:tr}}}(v^{n_\ell},\nabla F(v^{n_\ell}),\Delta^{n_\ell + 1,k})$.
		
		Moreover, the sufficient decrease condition \eqref{eq:accept} and the optimality of $\tilde{v}^{n_\ell + 1,k}$
		for $\text{{\ref{eq:tr}}}(v^{n_\ell},\nabla F(v^{n_\ell}),\Delta^{n_\ell + 1,k})$,
		see \cref{alg:trm} ln.\ \ref{ln:trstep}, implies that there exists $\ell_1 \ge \ell_0$ such that for all $\ell \ge \ell_1$ the estimate
		\begin{align*}
		\pred(v^{n_\ell},\Delta^{n_\ell + 1,k})\,
		&= (\nabla F(v^{n_\ell}), v^{n_\ell} - \tilde{v}^{n_\ell + 1,k})_{L^2}
		+\alpha \TV( v^{n_\ell}) -  \alpha \TV(\tilde{v}^{n_\ell + 1,k})\\
		&\ge (\nabla F(v^{n_\ell}), v^{n_\ell} - v)_{L^2}
			+\alpha \TV( v^{n_\ell}) -  \alpha \TV(v)\\
		&\ge (\nabla F(v^{n_\ell}), v^{n_\ell} - v)_{L^2}
		+\alpha \delta \\
		&\ge - \frac{1-\sigma}{3-\sigma}\alpha\delta + \alpha\delta
		\end{align*}
		holds if the iterate is accepted in inner iteraton $k$ of outer iteration $\ell$ and $v$ is feasible 
		for $\text{{\ref{eq:tr}}}(v^{n_\ell},\nabla F(v^{n_\ell}),\Delta^{n_\ell + 1,k})$.
		Note that the optimality of $\tilde{v}^{n_\ell + 1,k}$ gives the
		first inequality and \eqref{eq:pred_estimate_wrt_sigma_and_delta}
		gives the third.
		
		Because $v^{n_\ell} \weakstarto v$ in $\BVV(\Omega)$, and hence $v^{n_\ell} \to v$ in $L^1(\Omega)$,
		there exists $\ell_2 \ge \ell_1$ such that for all $\ell \ge \ell_2$ the function $v$ is feasible for
		$\text{{\ref{eq:tr}}}(v^{n_\ell},\nabla F(v^{n_\ell}),\Delta^{n_\ell + 1,k_0})$.
		The feasibility of $v$ and \eqref{eq:pred_estimate_wrt_sigma_and_delta} yield
		$\pred(v^{n_\ell},\Delta^{n_\ell + 1,k_0}) \ge \alpha \delta - \frac{1-\sigma}{3-\sigma}\alpha\delta \ge \frac{2}{3}\alpha\delta> 2\frac{1-\sigma}{3-\sigma}\alpha\delta$.
		
		Thus if the inner loop reaches iteration $k_0$ for $\ell \ge \ell_0$ we obtain
		\begin{align*}
		\frac{\ared(v^{n_\ell}, \tilde{v}^{n_\ell + 1,k_0})}{\pred(v^{n_\ell},\Delta^{n_\ell + 1,k_0})}
		\ge \frac{\pred(v^{n_\ell},\Delta^{n_\ell + 1,k_0}) - 2 \frac{1-\sigma}{3-\sigma}\alpha\delta}{\pred(v^{n_\ell},\Delta^{n_\ell + 1,k_0})}
		\ge \frac{1 - 3 \frac{1-\sigma}{3-\sigma}}{1 - \frac{1-\sigma}{3-\sigma}}
		= \sigma,
		\end{align*}
		where the first inequality is due to \eqref{eq:pred_estimate_wrt_sigma_and_delta}
		and the second follows from the estimates above
		and the fact that
		$p \mapsto \frac{p - c}{p}$ is monotone ($c = 2\tfrac{1-\sigma}{3-\sigma}$).
		Consequently,
		\[ \ared(v^{n_\ell}, \tilde{v}^{n_\ell + 1,k_0}) \ge \sigma \pred(v^{n_\ell},\Delta^{n_\ell + 1,k_0}) \]
		and the iterate is accepted latemost in iteration $k_0$.
		Because the predicted reduction decreases with shrinking trust-region radii,
		the actual reduction in iteration $\ell$ is greater or equal than
		$\sigma \pred(v^{n_\ell},\Delta^{n_\ell + 1,k_0}) \ge \frac{2}{3} \sigma \alpha \delta$.
		Because the sequence of objective function values for the accepted iterates is
		monotonically decreasing, we obtain $J(v^{n_\ell + 1}) \to -\infty$, which is a contradiction.
		We conclude that $v^{n_\ell} \to v$ strictly in $\BVV(\Omega)$.
		
		\textbf{Outcome \ref{itm:infinite_seq_sl} (3) strict accumulation points
			are optimal for \eqref{eq:tr} if the trust-region radius is
			bounded away from zero:} Next, we assume that $\bar{v}$ with
		$\nabla F(\bar{v}) \in C(\bar{\Omega})$ is a weak-$^*$ and strict
		limit of a subsequence $(v^{n_\ell})_\ell$. Moreover, we assume
		that the trust-region radius upon acceptance of the iterates
		$v^{n_\ell+1}$ is bounded away from zero, that is
		$0 < \underline{\Delta} \coloneqq \inf_{\ell \in \N} \min_k \Delta^{n_\ell + 1,k}$.
		Because $0< \underline{\Delta}$ and $\Delta^{n_\ell + 1,k} = \Delta^0 2^{-k}$ for all
		inner iterations $k$, we may restrict to an infinite subsequence (for ease of notation
		denoted by the same symbol) such that the iterate $n_\ell + 1$ is accepted in
		iteration $k_0$ with $\underline{\Delta} =  \Delta^0 2^{-k_0}$.
		The $\Gamma$-convergence established in \cref{thm:tr_gamma_convergence} gives
		that every cluster point of $(v^{n_\ell + 1})_\ell$ minimizes
		$\text{\ref{eq:tr}}(\bar{v},\nabla F(\bar{v}),\underline{\Delta})$.
		Moreover, the optimal objective function values of the optimization problems 
		$\text{\ref{eq:tr}}(v^{n_\ell},\nabla F(v^{n_\ell}),\underline{\Delta})$,
		the predicted reductions upon acceptance, converge to zero because otherwise we would
		obtain the contradiction $J(v^{n_\ell + 1}) \to -\infty$. Thus the minimal objective of
		$\text{\ref{eq:tr}}(\bar{v},\nabla F(\bar{v}),\underline{\Delta})$ is zero, implying that
		$\bar{v}$ is optimal for $\text{\ref{eq:tr}}(\bar{v},\nabla F(\bar{v}),\underline{\Delta})$.
		In particular, $\bar{v}$ is L-stationary by virtue of \cref{prp:TRset_stationarity}.
		
		\textbf{Outcome \ref{itm:infinite_seq_sl} (4) strict accumulation points
			are L-stationary if the trust-region radius vanishes:} We close the proof by proving that
		if $v^{n_\ell} \weakstarto v$ and the trust-region radii upon acceptance of
		the iterates $v^{n_\ell + 1}$ vanish, then $v$ is L-stationary.
		We employ a contrapositive argument and assume that
		$v$ is not L-stationary. We have to show that the trust-region radius
		upon acceptance of the iterates $v^{n_\ell + 1}$ is bounded
		away from zero. Let $\ell \in \N$, let $\Delta^* \in \{ \Delta^0 2^{-j}\,|\, j \in \N\}$, and
		let $v^*$ minimize $\text{\ref{eq:tr}}(v^{n_\ell},\nabla F(v^{n_\ell}),\Delta^*)$. Then
		\begin{gather}\label{eq:aredpred_vnlvstar}
		\ared(v^{n_\ell}, v^*)
		\ge \sigma \pred(v^{n_\ell}, \Delta^*)
		+ (1 - \sigma)\pred(v^{n_\ell}, \Delta^*)
		- \frac{C}{2}(\Delta^*)^2
		\end{gather}
		by virtue of Taylor's theorem, the estimate from \cref{ass:hessian_regularity}
		and feasibility
		of $v^*$. Thus it is sufficient to show that $(1 - \sigma)\pred(v^{n_\ell}, \Delta^*) - \frac{C}{2}(\Delta^*)^2 \ge 0$
		holds for some $\Delta^*$ and all large enough $\ell$.
		
		Because $v$ is not L-stationary, there exist $\phi \in C_c^\infty(\Omega;\R^d)$ and $\eta > 0$ such that
		\[
		\begin{multlined}
		\sum_{i=1}^M \nu_i\int_{\partial^*{E}_i \cap \Omega} (-\nabla F(v))(x)(\phi(x)\cdot n_{{E}_i}(x))\dd \Ha^{d-1}(x)\\
		- \alpha \sum_{i=1}^M \sum_{j=i + 1}^M|\nu_i - \nu_j|
		\int_{\partial^*E_i \cap \partial^* E_j}\bdvg{E_i} \phi(x)\dd \Ha^{d-1}(x)
		> \eta,
		\end{multlined}
		\]
		where $\{E_1,\ldots,E_M\}$ is a Caccioppoli partition of $\Omega$ such that
		$v = \sum_{i=1}^M \nu_i\chi_{E_i}$.
		Let $(f_t)_{t\in(-\varepsilon,\varepsilon)}$ be the local variation
		defined by $f_t \coloneqq I + t \phi$. We obtain that
		\begin{gather}\label{eq:ftvv_decrease}
		-(\nabla F(v), f_t^{\#}v - v)_{L^2} - \alpha \TV(f_t^{\#}v) + \alpha \TV(v)
		\ge t \eta + g(t),
		\end{gather}
		where $g : [-\varepsilon,\varepsilon] \to \R$ is a function
		such that $g(t) \in o(t)$ by virtue of \cref{lem:Taylor_TV_mg2,lem:Taylor_lin_mg2}.
		\Cref{lem:tr_feasibility} implies that there exist $\kappa > 0$ and $\varepsilon_1 \in (0,\varepsilon)$
		such that
		$\|f_t^{\#}v - v\|_{L^1} \le |t|\kappa$ holds for all
		$t \in (-\varepsilon_1,\varepsilon_1)$.
		
		We choose $\Delta^*$ small enough ($j$ large enough) such that
		\begin{enumerate}[label=(\alph*)]
			\item\label{itm:Deltastar_st_ftvfeasible_a} $\Delta^* \le 2 \varepsilon_1 \kappa$, and
			\item\label{itm:Deltastar_st_remainder_dominance} $(1 - \sigma)\left(0.5 \eta\Delta^* \kappa^{-1}
			+ g\left(0.5 \Delta^* \kappa^{-1}\right)\right)
			- \left(2(1 - \sigma) + 0.5 C\right)(\Delta^*)^2 \ge 0$
		\end{enumerate}
		hold true. The second inequality can be satisfied because $g(t) \in o(t)$ and $\kappa > 0$ is constant. Then
		we choose $\ell_0 \in \N$ large enough such that for all $\ell \ge \ell_0$ we obtain
		\begin{enumerate}[label=(\alph*),resume]
			\item\label{itm:Deltastar_st_ftvfeasible_b} $\|v^{n_\ell} - v\|_{L^1} \le \frac{\Delta^*}{2}$.
		\end{enumerate}
		Let $t \coloneqq \tfrac{\Delta^*}{2 \kappa}$. 
		Then \ref{itm:Deltastar_st_ftvfeasible_a} gives $t\le \varepsilon_1$ and
		$\|f_t^{\#}v - v\|_{L^1} \le |t|\kappa
		\le \Delta^*/2$ gives
		\[ \|f_t{^\#}v - v^{n_\ell}\|_{L^1} \le \|f_t{^\#}v - v\|_{L^1} + \|v^{n_\ell} - v\|_{L^1} 
		\le \Delta^*,\]
		which implies that $f_t^{\#}v$ is feasible for
		$\text{\ref{eq:tr}}(v^{n_\ell},\nabla F(v^{n_\ell}),\Delta^*)$.
		
		The strict convergence of $v^{n_\ell}$ yields that there is
		$\ell_1 \ge \ell_0$ such that for all $\ell \ge \ell_1$
		\begin{gather}\label{eq:continuity_of_tr_obj_terms}
		\begin{aligned}
		|(\nabla F(v),f_t^{\#}v - v)_{L^2} - (\nabla F(v^{n_\ell}),f_t^{\#}v - v^{n_\ell})_{L^2}|\le (\Delta^{*})^2 &\text{ and}\\
		|\alpha \TV(v) - \alpha\TV(v^{n_\ell})| \le (\Delta^{*})^2 &.
		\end{aligned}
		\end{gather}
		Then we can estimate
		\begin{align*}
		\hspace{.5em}&\hspace{-.5em}
		(1 - \sigma)\pred(v^{n_\ell}, \Delta^*) - 0.5 C(\Delta^*)^2\\
		&\ge -(1 - \sigma)\left((\nabla F(v^{n_\ell}), f_t^{\#}v - v^{n_\ell})_{L^2} + \alpha \TV(f_t^{\#}v) - \alpha \TV(v^{n_\ell})\right)
		- 0.5 C(\Delta^*)^2\\
		&\ge -(1 - \sigma)\left((\nabla F(v), f_t^{\#}v - v)_{L^2} + \alpha \TV(f_t^{\#}v) - \alpha \TV(v)\right)
		- C_1(\Delta^*)^2 \\
		&\ge (1 - \sigma)\left(0.5 \Delta^* \eta \kappa^{-1} + g\left(0.5\Delta^*\kappa^{-1}\right)\right) - \left(2(1 - \sigma) + 0.5 C\right)(\Delta^*)^2,
		\end{align*}
		where the first inequality follows from the feasibility of $f_t^{\#}v$ for
		$\text{\ref{eq:tr}}(v^{n_\ell},\nabla F(v^{n_\ell}),\Delta^*)$ and the second and
		third one follow from \eqref{eq:continuity_of_tr_obj_terms} and \eqref{eq:ftvv_decrease}
		with the choice $C_1 \coloneqq 2(1 - \sigma) + C/2$.
		
		Because $\Delta^*$ has been chosen small enough such that
		\ref{itm:Deltastar_st_remainder_dominance} holds we
		have shown $(1 - \sigma)\pred(v^{n_\ell}, \Delta^*) - 0.5 C(\Delta^*)^2 \ge 0$ for all large enough $\ell$.
	\end{proof}
	
	\section{Computational Experiments}\label{sec:comp} In order to provide a first qualitative 
	assessment of the method in practice, we consider $\Omega = (0,1)^2$ and
	the following instance of \eqref{eq:p} that is governed
	by a stationary advection-diffusion equation
	with a homogeneous Dirichlet boundary condition on three sides,
	$\Gamma_{D} = [0,1] \times \{0, 1\} \cup \{0\} \times (0,1)$,
	and a free boundary condition on the remaining side
	%
	\[ 
	\min_{y,w} \frac{1}{2}\|y - y_d\|_{L^2(\Omega)}^2
	+ \alpha \TV(w) \text{ s.t.\ }
	\left\{
	\begin{aligned}
	-\varepsilon \Delta y + b \cdot \nabla y &= w, \\
	w|_{\Gamma_{D}} &= 0, \\
	w(x) &\in \{0,1,2\}.
	\end{aligned}
	\right.
	\] 
	Therein, we choose the constants $\varepsilon = 1.5 \cdot 10^{-2}$,
	$b = \begin{pmatrix} \cos(\pi / 32) & \sin(\pi / 32)\end{pmatrix}^T$,
	and $\alpha = 10^{-4}$.
	In order to solve the PDE, we discretize its variational formulation with 
	the finite-element package \textsc{Firedrake} \cite{rathgeber2016firedrake}.
	We consider a fixed discretization of $\Omega$ into a grid of $64 \times 64$
	squares, which are decomposed into four triangles. We use continuous
	Lagrange finite elements of order one on the triangular grid for the state 
	variable and piecewise constant functions on the square grid for the control
	variable. 
	
	As pointed out in \cite{leyffer2021sequential}, the trust-region subproblems
	\eqref{eq:tr} become linear integer programs after discretization,
	which is briefly summarized in \Cref{sec:subproblems_as_ips}.
	We solve integer programming formulations of the discretized subproblems
	with the general purpose integer programming solver 
	\textsc{Gurobi}  \cite{gurobi}. For the trust-region algorithm  we
	choose $\Delta^0 = 0.125$  and $\sigma = 10^{-4}$. We execute our 
	computational experiments on a laptop computer with an Intel(R)
	Core(TM) i9-10885H CPU (2.40\;GHz) and 32\;GB RAM. 
	
	The execution of \cref{alg:trm} takes 73 iterations and
	takes 100 minutes. \Cref{alg:trm} terminates when the trust-region
	contracts to zero (that is $\Delta$ falls below $\lambda(\Omega) / 64^2$).
	The final objective value that is reached is $5.69 \cdot 10^{-4}$,
	where the tracking-type term has the value $5.64 \cdot 10^{-5}$ and
	the $\alpha \TV$-term has the value $5.13 \cdot 10^{-4}$.
	
	For comparison, a continuous relaxation with the choices $\alpha = 0$
	and $w(x) \in [0,2]$ initialized with $w^0 \equiv 0$ with the control
	optimized on the triangular discretization takes two minutes to solve
	using the solver \textsc{PETSc TAO} \cite{tao}. The final objective
	value (only consisting of the tracking-type term) that is reached
	is $2.24 \cdot 10^{-5}$.
	
	Because a numerical analysis of the algorithm and experiments
	on test problem libraries are beyond of the scope of this work, we
	refrain from speculating about convergence speed, etc.\ but still like to
	show the convergence behavior of our implementation of
	\cref{alg:trm} for our example. To this end, we plot the trust-region radius on acceptance and the objective value
	over the iterations in \cref{fig:tr_ov_over_iters}.
	\begin{figure}
		\centering
		\begin{subfigure}{.5\textwidth}
			\centering
			\includegraphics[width=.95\linewidth]{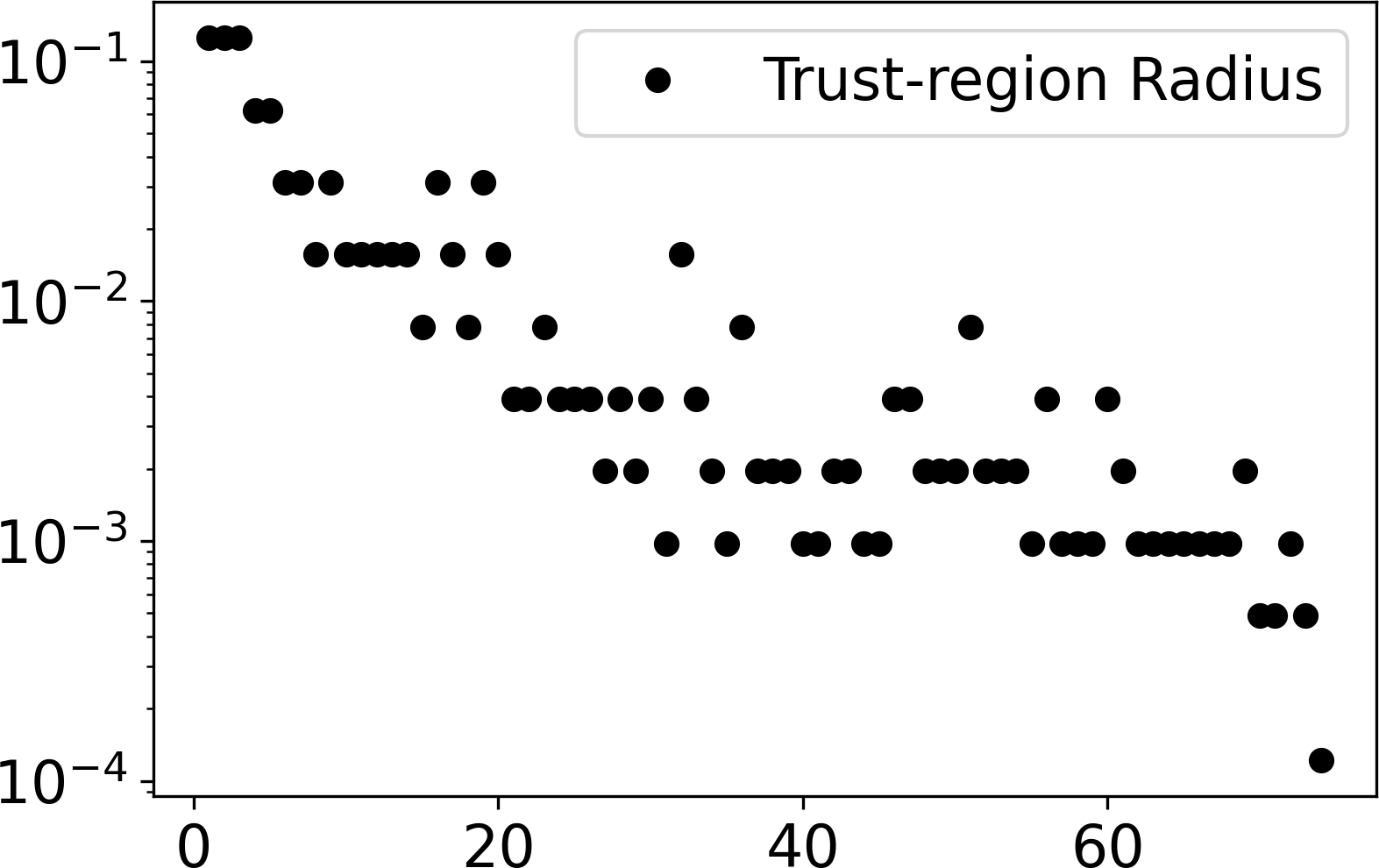}
		\end{subfigure}%
		\begin{subfigure}{.5\textwidth}
			\centering
			\includegraphics[width=.95\linewidth]{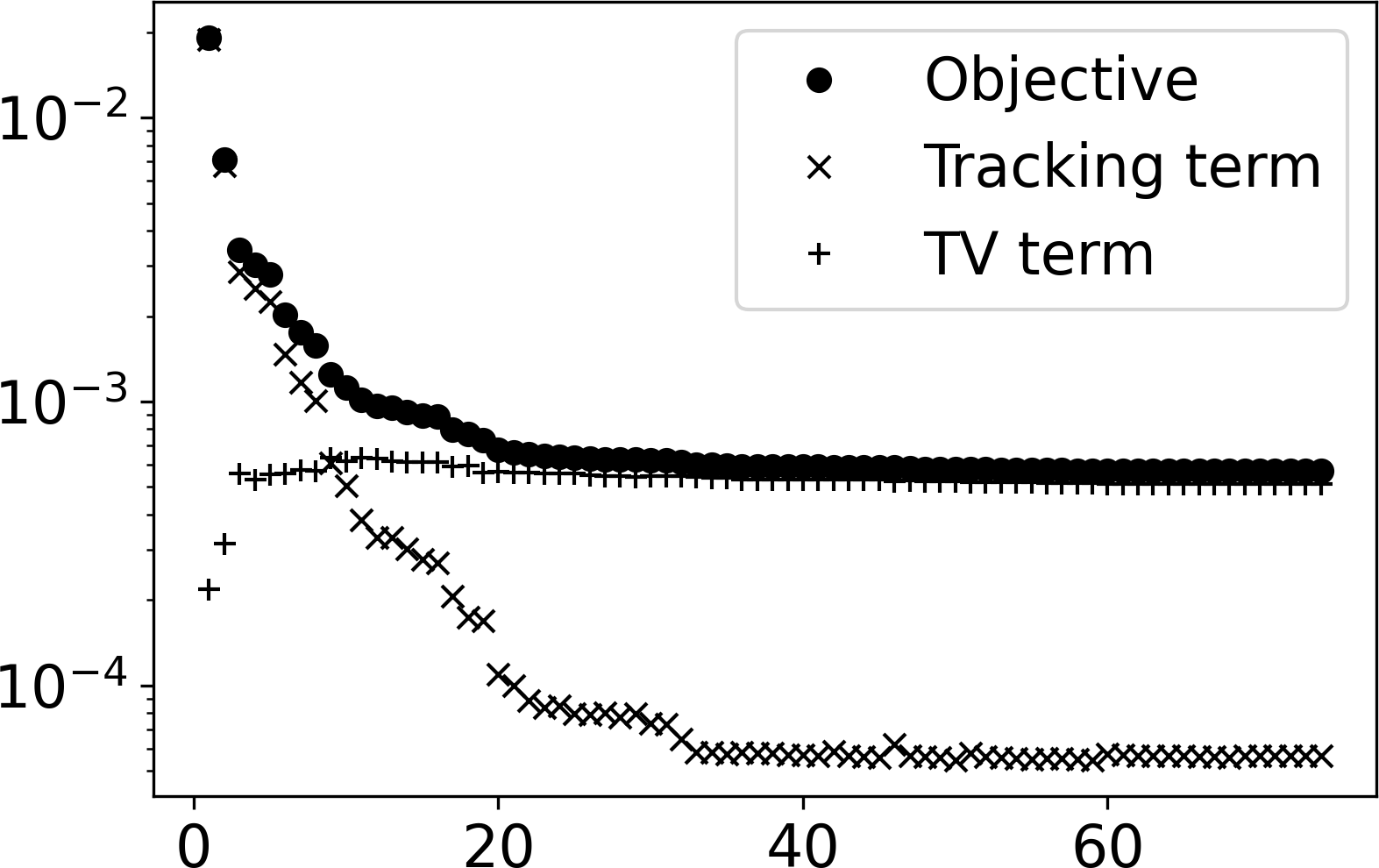}
		\end{subfigure}
		\caption{Trust-region radius upon acceptance (left) and
			objective value (right) achieved over the iterations of
			\cref{alg:trm}.}
		\label{fig:tr_ov_over_iters}
	\end{figure}
	
	Our implementation of \cref{alg:trm} is able 
	to compute a non-trivial control whose level sets partition the 
	control domain. In particular, the level sets to the values $1$ and
	$2$ of the final control consist of several disjoint connected
	components. We illustrate this by plotting
	the controls and corresponding states for iterations 3, 25, and
	73 in \cref{fig:ctrl_state_over_iterations}.
	
	\begin{figure}
		\centering
		\begin{subfigure}{.25\textwidth}
			\centering
			\includegraphics[width=\linewidth]{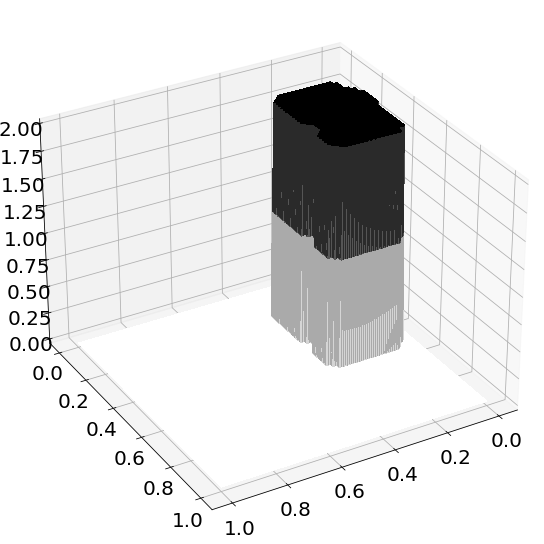}
		\end{subfigure}%
		\begin{subfigure}{.25\textwidth}
			\centering
			\includegraphics[width=\linewidth]{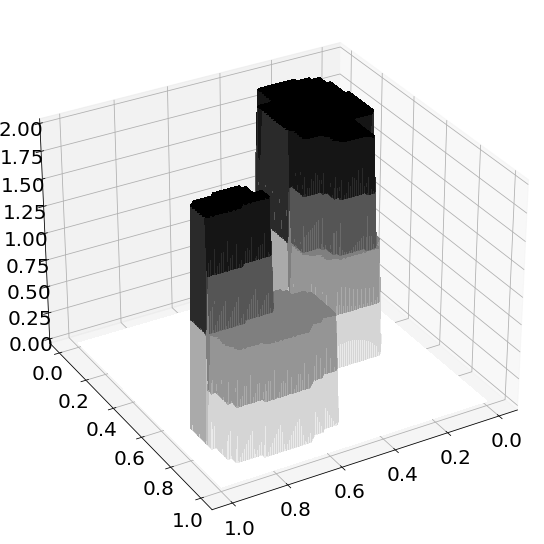}
		\end{subfigure}%
		\begin{subfigure}{.25\textwidth}
			\centering
			\includegraphics[width=\linewidth]{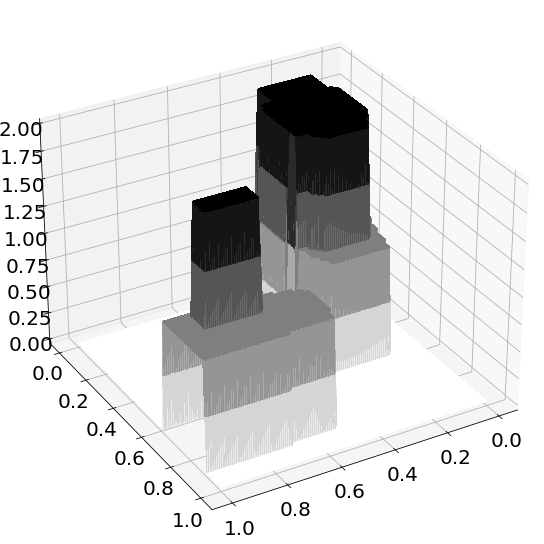}
		\end{subfigure}%
		\begin{subfigure}{.25\textwidth}
			\centering
			\includegraphics[width=\linewidth]{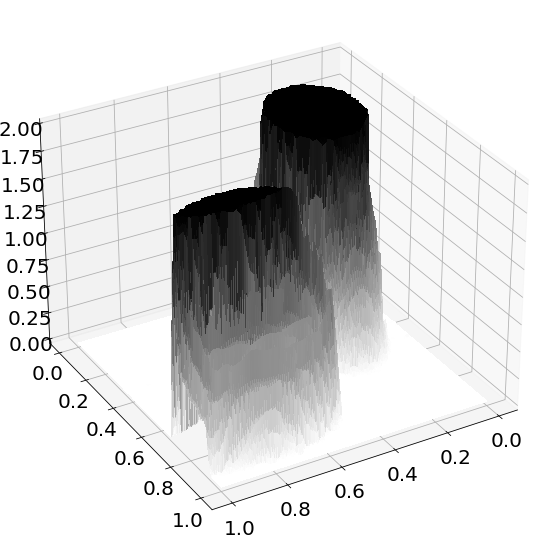}
		\end{subfigure}\\
		\begin{subfigure}{.25\textwidth}
			\centering
			\includegraphics[width=\linewidth]{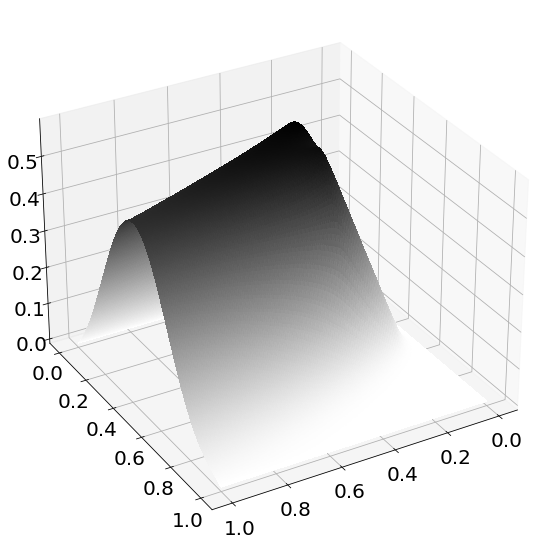}
			\caption{It \# 3}
		\end{subfigure}%
		\begin{subfigure}{.25\textwidth}
			\centering
			\includegraphics[width=\linewidth]{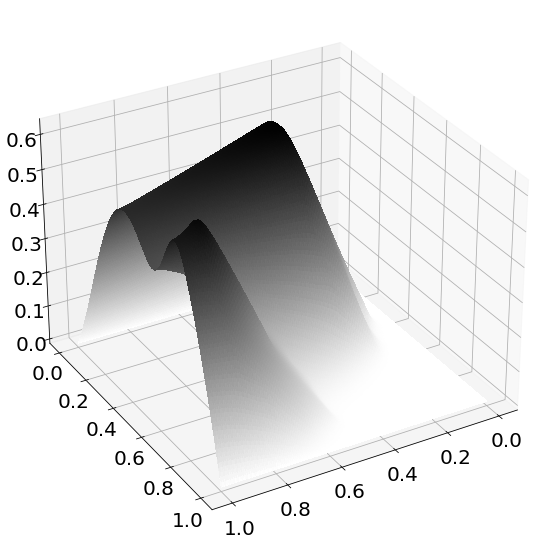}
			\caption{It \# 25}
		\end{subfigure}%
		\begin{subfigure}{.25\textwidth}
			\centering
			\includegraphics[width=\linewidth]{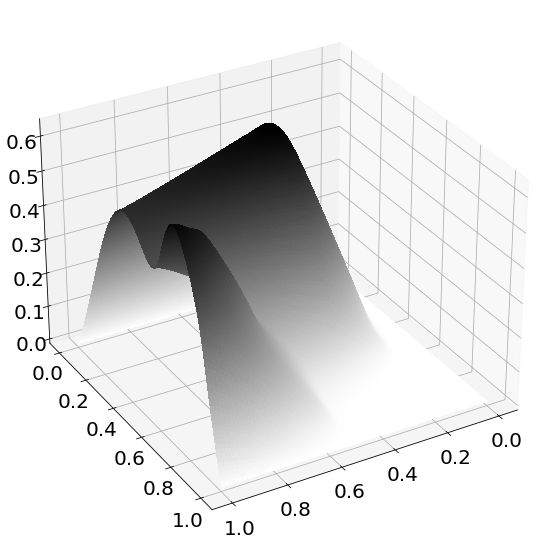}
			\caption{It \# 73}
		\end{subfigure}%
		\begin{subfigure}{.25\textwidth}
			\centering
			\includegraphics[width=\linewidth]{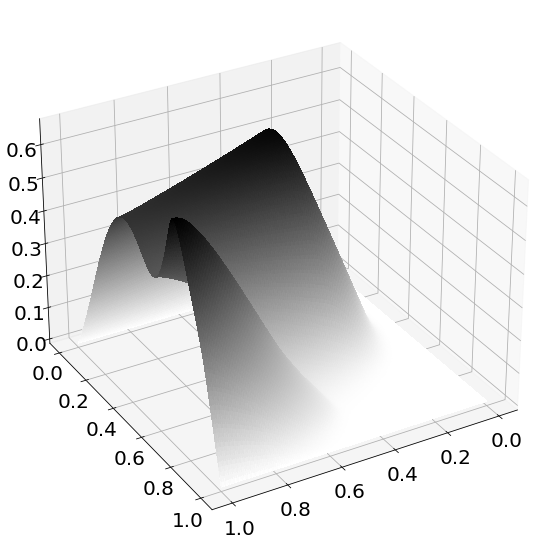}
			\caption{Relaxation}
		\end{subfigure}
		\caption{Control iterates (top) and corresponding
			states (bottom) produced by \cref{alg:trm}
			for iterations 3, 25, and 73 as well as for the
			final iteration of the relaxation.}
		\label{fig:ctrl_state_over_iterations}
	\end{figure}
	
	\section{Conclusion}\label{sec:conclusion}
	
	We have investigated TV-regularized integer optimal control problems on multi-dimensional domains. We have derived a first-order optimality condition for locally optimal solutions of \cref{eq:p} by means of local variations that yield feasible perturbations of the level sets of our integer-valued control inputs. In order to solve \cref{eq:p}, we have analyzed the asymptotics of the trust-region algorithm proposed in \cite{leyffer2021sequential} for the case $d \geq 2$. We have proved $\Gamma$-convergence of the trust-region subproblems with respect to strict convergence of the linearization point. We have established a sufficient decrease condition based on local variations, which in turn leads to convergence of the iterates of the trust-region algorithm to first-order optimal points.
	
	In order to obtain a useful method in practice, it is important to extend 
	our function space analysis with sophisticated discretizations
	and a corresponding numerical analysis. Moreover, we have experienced
	numerical difficulties and very long compute times of the
	integer programming solver when choosing finer 
	control discretizations for the example in \cref{sec:comp}
	so that scaling and stabilization techniques as well as efficient 
	algorithms for solving the discretized trust-region subproblems 
	remain open questions as well.
	
	\section*{Acknowledgments}\label{sec:acknowledgments}
	The authors thank two anonymous referees for providing helpful feedback on the manuscript.
	
	\appendix
	
	\section{Auxiliary Results} \label{sec:auxilary}
	\begin{proof}[Proof of \cref{lem:TV_of_BVV}]
		Claim (a) follows, e.g., from \cite[Lem.\ 2.2]{leyffer2021sequential}. Claim (b) follows from \cite[Thm.\  3.40]{ambrosio2000functions}, which states that for $v \in \BVV(\Omega)$ the sets
			\begin{align*}
			F_i \coloneqq \left\{ x \in \Omega \, \middle| \, v(x) > \nu_i - \varepsilon \right\} = \left\{ x \in \Omega \, \middle| \, v(x) \geq \nu_i \right\}
			\end{align*}
			with $\varepsilon \in (0,1)$ and $i \in \{1,\dots,M\}$ are of finite perimeter in $\Omega$. Without loss of generality, we may assume $\nu_i > \nu_{i+1}$ for all $i \in \{1, \dots, M-1\}$. We define $E_1 \coloneqq F_1 = v^{-1}(\{\nu_1\})$ and $E_i \coloneqq F_i \setminus F_{i-1} = v^{-1}(\{\nu_i\})$ for all $i \in \{2,\dots,M\}$. This yields $P(E_1,\Omega) = P(F_1,\Omega) < \infty$, $P(E_i, \Omega) \leq P(F_i, \Omega) + P(F_{i-1},\Omega) < \infty$ for all $i \in \{ 2,\dots,M \}$, and $v = \sum_{i=1}^{M} \nu_i \chi_{E_i}$.
			
			In order to prove (c), let $\sum_{i=1}^{M} \nu_i \chi_{E_i} = v \in \BVV(\Omega)$ as in (b). By means of Theorems 3.36 and 3.59 in
			\cite{ambrosio2000functions}, we obtain
			\begin{gather}\label{eq:first_char}
			Dv = \sum_{i=1}^M \nu_i D\chi_{E_i}
			= -\sum_{i=1}^M \nu_i n_{E_i} \Ha^{d-1} \mres (\partial^* E_i \cap \Omega),
			\end{gather}
			where the functions $\chi_{E_i}$ are considered as elements of $\BV(\Omega)$.
			Next, we prove the identity
			\begin{gather}\label{eq:dv_identity}
			Dv = \sum_{i=1}^{M-1} \sum_{j=i+1}^{M} (\nu_j-\nu_i) n_{E_i} \mathcal{H}^{d-1} \mres (\partial^* E_i \cap \partial^* E_j \cap \Omega),
			\end{gather}
			where $Dv$ denotes the Radon measure that is
			the distributional derivative of $v \in \BVV(\Omega)$
			and $n_{E_i}$ denotes the unit outer normal vector of $E_i$
			that is defined on $\partial^* E_i$.
			To this end, we observe that every $x \in \partial^* E_i \cap \Omega$ is a point of density $\tfrac 12$ for $E_i$
			and, consequently, cannot be a point of density $1$ for any $E_j$, $j \in \{1,\ldots,M\}$. We apply
			Theorem 4.17 in \cite{ambrosio2000functions} to the right hand side of \eqref{eq:first_char} and obtain
			\[ 	Dv = -\sum_{i=1}^M \nu_i n_{E_i} \Ha^{d-1} \mres \Bigg(\underset{j \neq i}{\bigcup_{j = 1}^M} \underbrace{\partial^* E_i \cap \partial^* E_j \cap \Omega}_{\eqqcolon A_{ij}}
			\Bigg),
			\]
			where we have used that $x \in \Omega$ can be a point of density $\tfrac 12$ for at most two of the disjoint sets $E_k$, $k \in \{1,\ldots,M\}$.
			Using this observation again, we obtain that the sets $A_{ij}$ are pairwise disjoint, which implies
			\begin{gather}\label{eq:intermediate_claim}
			Dv = -\sum_{i=1}^{M-1}\sum_{j = i + 1}^M (\nu_i n_{E_i} + \nu_{E_j} n_{E_j}) \Ha^{d-1} \mres (\partial^* E_i \cap \partial^* E_j \cap \Omega).
			\end{gather}
			Then \eqref{eq:dv_identity} follows because $n_{E_j} = - n_{E_i}$ on $\partial^* E_i \cap \partial^* E_j$.
			Because every $x \in \partial^* E_i \cap \partial^* E_j$ has density $\frac{1}{2}$ with respect to $E_i$ and $E_j$ and therefore density $1$
			with respect to $\Omega$, which yields $x \notin \partial^* \Omega$, which is a subset of
			the points of density $\tfrac{1}{2}$
			with respect to $\Omega$.
		Since $\Omega$ has Lipschitz boundary, there holds $\Ha^{d-1}(\partial \Omega \setminus \partial^* \Omega) = 0$, see \cite[Prop.\ 3.62]{ambrosio2000functions}. In combination, we get
			\begin{gather*}
			\Ha^{d-1}((\partial^* E_i \cap \partial^* E_j) \setminus \Omega) = 0.
			\end{gather*}
			Moreover, because the $A_{ij}$ are pairwise disjoint, the measures $\Ha^{d-1} \mres A_{ij}$ are pairwise singular.
			Thus we deduce with $\TV(v) = |Dv|(\Omega)$ and $\|n_{E_i}(x)\| = 1$ for 
			\eqref{eq:dv_identity} that		
			\begin{gather*}
			\TV(v) 
			= \sum_{i=1}^{M-1} \sum_{j = i+1}^{M} | \nu_i - \nu_j | \mathcal{H}^{d-1}(\partial^* E_i \cap \partial^* E_j),
			\end{gather*}
			which gives \eqref{eq:tv_identity}. Moreover,
			\begin{gather*}
			\begin{aligned}
			\infty > \TV(v)
			&= \sum_{i=1}^{M - 1} \sum_{j = i+1}^{M} | \nu_i - \nu_j | \mathcal{H}^{d-1}(\partial^* E_i \cap \partial^* E_j) \\
			&\ge \frac{1}{2}\sum_{i=1}^{M}\underset{j \neq i}{\sum_{j = 1}^{M}} \mathcal{H}^{d-1}(\partial^* E_i \cap \partial^* E_j) \\
			&\ge \frac{1}{2} \sum_{i=1}^M\mathcal{H}^{d-1}(\partial^* E_i \cap \Omega)
			= \frac{1}{2} \sum_{i=1}^M P(E_i,\Omega),
			\end{aligned}
			\end{gather*}
			which gives \eqref{eq:Ek_has_finite_perimeter}.
	\end{proof}
	\begin{lemma}\label{lem:derivative_arg}
		Let $g_i : \R^d \to \R^d$, $i \in J$ for a (potentially uncountable) index set $J$ be a family of functions such that
			for $\delta \in (0,1)$ there exists a set $J_\delta \subset J$ such that for all $x \in \R^d$ it holds that $\|\nabla g_{i}(x) - I\| \le \delta$
			for all $i \in J_\delta$. Then for all $i \in J_{\delta}$ the function $g_i$ is a diffeomorphism.
	\end{lemma}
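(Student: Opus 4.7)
My plan is to establish the three defining properties of a diffeomorphism for each fixed $i \in J_\delta$: (i) the Jacobian $\nabla g_i(x)$ is everywhere invertible (with uniform bounds), (ii) $g_i$ is injective, and (iii) $g_i$ is surjective. Once these are in place, the inverse function theorem upgrades the bijection to a diffeomorphism because the local smoothness of $g_i^{-1}$ is automatic from the local smoothness of $g_i$ and the invertibility of its Jacobian.

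First I would handle invertibility of $\nabla g_i(x)$. Writing $\nabla g_i(x) = I + (\nabla g_i(x) - I)$ and using $\|\nabla g_i(x) - I\| \le \delta < 1$, the Neumann series yields that $\nabla g_i(x)$ is invertible with $\|(\nabla g_i(x))^{-1}\| \le (1-\delta)^{-1}$, uniformly in $x$. Hence the inverse function theorem guarantees that $g_i$ is a local diffeomorphism around every point of $\R^d$.

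Next, the key quantitative ingredient is a global Lipschitz-type estimate. Applying the fundamental theorem of calculus along the segment from $y$ to $x$, I would write
\[
g_i(x) - g_i(y) - (x-y) = \int_0^1 \bigl(\nabla g_i(y + t(x-y)) - I\bigr)(x-y)\,\dd t,
\]
from which $\|g_i(x) - g_i(y) - (x-y)\| \le \delta \|x-y\|$ follows. The reverse triangle inequality then gives $\|g_i(x) - g_i(y)\| \ge (1-\delta)\|x-y\|$, which immediately proves injectivity.

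The main (and only nonroutine) obstacle is surjectivity, which I would handle via the Banach fixed point theorem. Given $y \in \R^d$, define $T_y : \R^d \to \R^d$ by $T_y(z) \coloneqq z - g_i(z) + y$. Using the estimate of the previous paragraph, $\|T_y(z_1) - T_y(z_2)\| = \|(z_1 - z_2) - (g_i(z_1) - g_i(z_2))\| \le \delta \|z_1 - z_2\|$, so $T_y$ is a contraction on the complete metric space $\R^d$. Its unique fixed point $z^*$ satisfies $g_i(z^*) = y$, establishing surjectivity. Combining bijectivity with the local diffeomorphism property from the first step gives that $g_i^{-1}$ exists and is as smooth as $g_i$ locally, hence globally, so $g_i$ is a diffeomorphism of $\R^d$.
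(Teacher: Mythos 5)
Your proposal is correct and follows essentially the same route as the paper: both establish bijectivity via the Banach fixed point theorem applied to the contraction $z \mapsto y + z - g_i(z)$ (your $T_y$ is the paper's $G_i^y$), note that $\nabla g_i(x)$ is invertible since $\|\nabla g_i(x)-I\|\le\delta<1$, and conclude with the inverse function theorem. Your version merely spells out more detail (the Neumann series bound, the explicit Lipschitz lower bound giving injectivity separately), whereas the paper obtains injectivity directly from uniqueness of the fixed point.
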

	\begin{proof}
		The uniform estimate $\|\nabla g_i(x) - I\| \le \delta$ implies that for $\delta < 1$
		the functions $G_i^y : \R^d \ni x \mapsto y + (x - g_i(x)) \in \R^d$, $i \in J_\delta$, have a unique fixed point
		for all $y \in \R^d$ by virtue of the Banach fixed point theorem. In particular, $x = G^y_i(x)$ if and only if $y = g_i(x)$.
		Thus $g_i$ is invertible. Moreover, $\nabla g_i(x)$ is invertible
		for all $x\in \R^d$
		and thus the inverse function theorem implies that $g_i$ is a diffeomorphism.
	\end{proof}
	\begin{lemma}\label{lem:g_t}
		Let $f_t \coloneqq I + t \phi$ for $\phi \in C_c^\infty(\Omega;\R^d)$. Then there is $\varepsilon > 0$ such that $g_t \coloneqq f_t^{-1}  = I - t \phi \circ g_t$ for $t \in (- \varepsilon, \varepsilon)$. Moreover, the mapping $(-\varepsilon , \varepsilon) \ni t \mapsto g_t(y) \in \R^d$ is Lipschitz continuous for each $y \in \R^d$ and $\nabla g_t(y) \to I$ as $t \to 0$ uniformly for $y \in \R^d$.
	\end{lemma}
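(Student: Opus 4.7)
My plan is to first use \cref{prp:elementary_local_variation_properties} (together with \cref{lem:derivative_arg}) to obtain $\varepsilon > 0$ such that $f_t$ is a diffeomorphism for all $t \in (-\varepsilon,\varepsilon)$; in particular, the inverse $g_t \coloneqq f_t^{-1}$ is well defined. The identity $g_t = I - t\,\phi \circ g_t$ is then just a rewriting of $f_t(g_t(y)) = y$, i.e., $g_t(y) + t\,\phi(g_t(y)) = y$.

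For the Lipschitz dependence on $t$, I would fix $y \in \R^d$ and $s$, $t \in (-\varepsilon,\varepsilon)$, subtract the two identities, and estimate
\[
\|g_t(y) - g_s(y)\| \le |t|\,\|\phi(g_t(y)) - \phi(g_s(y))\| + |t - s|\,\|\phi(g_s(y))\|
\le |t|\, L_\phi\,\|g_t(y) - g_s(y)\| + |t-s|\,\|\phi\|_{\infty},
\]
where $L_\phi$ is the Lipschitz constant of $\phi$ (finite because $\phi \in C_c^\infty$). After possibly shrinking $\varepsilon$ so that $|t| L_\phi \le 1/2$, absorbing the first term on the left yields the Lipschitz estimate with constant $2\|\phi\|_\infty$, uniformly in $y$.

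For the claim $\nabla g_t(y) \to I$ uniformly in $y$, I would differentiate $f_t \circ g_t = I$ using the chain rule to obtain $\nabla g_t(y) = (\nabla f_t(g_t(y)))^{-1} = (I + t\,\nabla \phi(g_t(y)))^{-1}$. Since $\|\nabla\phi\|_\infty$ is finite, after shrinking $\varepsilon$ further one has $\|t\,\nabla\phi(g_t(y))\| \le 1/2$ uniformly in $y$, so the Neumann series gives
\[
\nabla g_t(y) - I = \sum_{k=1}^{\infty} (-1)^k t^k (\nabla \phi(g_t(y)))^k,
\]
whose norm is bounded by $|t|\|\nabla\phi\|_\infty / (1 - |t|\|\nabla\phi\|_\infty)$, uniformly in $y$. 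Letting $t \to 0$ yields the required uniform convergence.

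There is no real obstacle here; the only subtlety is to arrange the successive shrinkings of $\varepsilon$ (for invertibility of $f_t$, for the fixed-point absorption in the Lipschitz step, and for the Neumann series bound) in a consistent order so that a single $\varepsilon > 0$ serves all three assertions.
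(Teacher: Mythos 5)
Your proposal is correct and follows essentially the same route as the paper: the paper also derives the fixed-point identity $g_t = I - t\,\phi\circ g_t$ (via Banach's fixed point theorem applied to $x \mapsto y - t\phi(x)$, rather than by citing \cref{prp:elementary_local_variation_properties}, but this is the same mechanism underlying \cref{lem:derivative_arg}), proves Lipschitz continuity in $t$ by the same subtract-and-absorb estimate, and obtains $\nabla g_t(y) \to I$ from the identical Neumann-series expansion of $(\nabla f_t(g_t(y)))^{-1}$. No gaps.
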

	\begin{proof}
		Let $\phi \in C_c^\infty(\Omega;\R^d)$. We denote the Lipschitz constant of $\phi$ by $L_\phi$. Let $\varepsilon < \frac{1}{L_\phi}$. Define $T_y : \R^d \to \R^d$, $T_y \coloneqq y - t \phi$, for fixed $t \in (-\varepsilon,\varepsilon)$ and $y \in \R^d$. Then $T_y$ is a contraction mapping since for arbitrary $x,z \in \R^d$ it holds that
		\[
		\| T_y(x) - T_y(z) \| = | t | \| \phi(x)-\phi(z) \| \leq | t | L_\phi \| x - z \|.
		\]
		By Banach's fixed point theorem, there exists a unique fixed point $\tilde{x}_y \in \R^d$ with $T_y(\tilde{x}_y) = \tilde{x}_y$. We define $g_t(y) \coloneqq \tilde{x}_y$ and prove that $g_t = f_t^{-1}$. It holds that $g_t(y) = T_y(\tilde{x}_y) = y - t \phi(\tilde{x}_y) = y - t \phi(g_t(y))$ for $y \in \R^d$ and therefore
		\begin{align*}
		\| g_t(f_t(y)) - y \| &= 
		\| t \phi(y) - t \phi(g_t(f_t(y))) \|
		\leq |t| L_\phi \|g_t(f_t(y)) - y \|.
		\end{align*}
		Since $|t|L_\phi < 1$, it must hold that $\| g_t(f_t(y)) - y \|  = 0$.
		
		We now prove the Lipschitz continuity of $g_t(y)$ in $t \in (-\varepsilon,\varepsilon)$ for fixed $y \in \R^d$. For this, let $t,s \in (-\varepsilon,\varepsilon)$. Then
		\begin{align*}
		\| g_t(y) - g_s(y) \| & = \| s \phi(g_s(x)) - t \phi(g_t(y)) \| 
		\leq |s-t| \| \phi(g_s(y)) \| + |t| L_\phi \|g_s(y) - g_t(y) \|
		\end{align*}
		and therefore
		\begin{align*}
		\| g_t(y) - g_s(y) \| \leq |t-s| \frac{\| \phi(g_s(y)) \|}{1- |t| L_\phi} < |t-s| \frac{\| \phi(g_s(y)) \|}{1- \varepsilon L_\phi} \leq |t - s | C
		\end{align*}
		with $C = (1 - \varepsilon L_\phi)^{-1} \max_{x \in \bar{\Omega}} \| \phi(x) \| < \infty$, only depending on $\phi$ but not on $y$.
		
		In order to prove the convergence of $\nabla g_t(y)$, we now choose $\varepsilon$ small enough so that \cref{lem:derivative_arg} gives the existence of $(\nabla f_t(x))^{-1}$, which we write as its Neumann series $(\nabla f_t(x))^{-1} = I + \sum_{k=1}^{\infty} (-1)^k t^k \nabla \phi(x)^k$. By the inverse function theorem we have
		\[
		\nabla g_t(y) = (\nabla f_t(g_t(y)))^{-1} = I + \sum_{k=1}^\infty (-1)^k t^k \nabla \phi(g_t(y))^k.
		\]
		Thus, for $|t|$ sufficiently small (that is by potentially further reducing $\varepsilon$)
		\begin{align*}
		\| I - \nabla g_t(y) \| &\le
		\sum_{k=1}^{\infty} |t|^k \| \nabla \phi (g_t(y))\|^k 
		\leq  \sum_{k=1}^{\infty} |t|^k M^k = \frac{|t| M}{1 -|t| M } \to 0
		\end{align*}
		as $|t| \to 0$
		uniformly for all $y \in \R^d$ with $M \coloneqq \max_{x \in \bar{\Omega}} \| \nabla \phi (x) \| < \infty$.
	\end{proof}
	
		\begin{lemma}\label{lem:boundary_union}
			Let $E,F \subset \Omega$ be sets of finite perimeter. Then there holds
			\begin{align*}
			\partial^*(E \cup F) \subset \partial^e E \cup \partial^e F \text{ and } \partial^*(E \setminus F) \subset \partial^e E \cup \partial^e F.
			\end{align*}
			In particular, there holds
			\begin{align*}
			\Ha^{d-1}(\partial^*(E \cup F)) \leq \Ha^{d-1}(\partial^* E \cup \partial^* F) \text{ and } \Ha^{d-1}(\partial^*(E \setminus F)) \leq \Ha^{d-1} (\partial^* E \cup \partial^* F).
			\end{align*}
		\end{lemma}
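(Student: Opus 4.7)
My plan is to prove the set-theoretic inclusions first by a contrapositive density argument, and then derive the Hausdorff measure bounds via the standard fact that $\Ha^{d-1}(\partial^e A \setminus \partial^* A) = 0$ for any set of finite perimeter $A$ (Federer's theorem, see \cite[Thm.\ 3.61]{ambrosio2000functions}, which is already invoked elsewhere in the paper).

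For the inclusion $\partial^*(E \cup F) \subset \partial^e E \cup \partial^e F$, the key observation is that $\partial^* A \subset \partial^e A$ in general, because every point of the reduced boundary has $d$-dimensional density $\tfrac 12$ with respect to $A$. I would therefore argue by contrapositive: suppose $x \notin \partial^e E \cup \partial^e F$, so $x$ has density $0$ or $1$ with respect to each of $E$ and $F$. Then a short case distinction on the four possibilities, using only the measure-theoretic sandwich $\chi_{E \cup F} \le \chi_E + \chi_F$ and the inequality $\chi_{E \cup F} \ge \max\{\chi_E,\chi_F\}$ restricted to small balls $B_r(x)$, yields that $x$ has density $0$ (when $x$ has density $0$ in both $E$ and $F$) or density $1$ (when $x$ has density $1$ in at least one of them) with respect to $E \cup F$. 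Hence $x \notin \partial^e(E \cup F) \supset \partial^*(E \cup F)$.

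The inclusion for $E \setminus F$ follows from the same strategy: the identity $\chi_{E \setminus F} = \chi_E(1 - \chi_F)$, together with monotonicity of the density under inclusion $E \setminus F \subset E$, gives density $0$ whenever $x$ has density $0$ in $E$ or density $1$ in $F$, and the estimate $\chi_{E \setminus F} \ge \chi_E - \chi_F$ shows density $1$ in the remaining case of density $1$ in $E$ and density $0$ in $F$. Hence $x \notin \partial^e(E \setminus F) \supset \partial^*(E \setminus F)$.

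For the Hausdorff measure bounds, I would combine the set inclusion just established with subadditivity and Federer's theorem. Since $\partial^e A = \partial^* A \cup (\partial^e A \setminus \partial^* A)$ and the remainder has $\Ha^{d-1}$-measure zero for sets of finite perimeter, we obtain $\Ha^{d-1}(\partial^e E \cup \partial^e F) \le \Ha^{d-1}(\partial^* E \cup \partial^* F)$, and chaining this with monotonicity of $\Ha^{d-1}$ applied to the inclusions above delivers both claimed inequalities. I do not foresee any serious obstacle here; the only care needed is to make sure that the trivial cases where $x$ has density $0$ or $1$ with respect to one of the sets are handled without implicit assumptions on whether $x \in E$ or $x \in F$, which the density formulation avoids automatically.
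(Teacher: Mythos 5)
Your proof is correct, and it takes a mildly but genuinely different route from the paper's. The paper argues \emph{forward}: for $x \in \partial^*(E \cup F)$ it invokes the fact that reduced-boundary points have density $\tfrac12$ (via \cite[Thm.\ 3.61]{ambrosio2000functions}), then deduces from $\tfrac12 \ge \limsup \lambda(E \cap B_r(x))/\lambda(B_r(x))$ (and likewise for $F$) that neither set has density $1$ at $x$, and from the subadditivity bound $\tfrac12 \le \liminf\bigl(\lambda(E\cap B_r(x)) + \lambda(F \cap B_r(x))\bigr)/\lambda(B_r(x))$ that not both can have density $0$, so $x \in \partial^e E \cup \partial^e F$. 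You argue by \emph{contrapositive} and need only the weaker inclusion $\partial^* A \subset \partial^e A$ rather than the exact density-$\tfrac12$ property; your four-way case split on the densities of $E$ and $F$ is complete and each case closes correctly (the $(1,\cdot)$ and $(\cdot,1)$ cases via monotonicity, the $(0,0)$ case via subadditivity). For $E \setminus F$ the paper reduces to the union case through the complement identity $\R^d \setminus (E\setminus F) = (\R^d \setminus E) \cup F$ together with the invariance of $\partial^*$ and $\partial^e$ under complementation, whereas you redo the case analysis directly with $\chi_{E\setminus F} = \chi_E(1-\chi_F)$ and $\chi_{E\setminus F} \ge \chi_E - \chi_F$; both work, the paper's reduction being shorter and yours more self-contained. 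The final measure inequalities are handled identically in both proofs, via monotonicity of $\Ha^{d-1}$ and Federer's theorem $\Ha^{d-1}(\partial^e A \setminus \partial^* A) = 0$ for the finite-perimeter sets $E$ and $F$. No gaps.
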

		\begin{proof}
			Let $x \in \partial^* (E \cup F)$. Then by \cite[Thm. 3.61]{ambrosio2000functions}, there holds that $x$ has density $\frac{1}{2}$ regarding $E \cup F$, and therefore
			\begin{align*}
			\frac{1}{2} = \lim_{r \searrow 0}  \frac{\lambda((E \cup F) \cap B_r(x) ) )}{\lambda(B_r(x))} \geq \limsup_{r \searrow 0} \frac{\lambda( E \cap B_r(x) ) }{\lambda(B_r(x))}
			\end{align*}
			and
			\begin{align*}
			\frac{1}{2} = \lim_{r \searrow 0}  \frac{\lambda((E \cup F) \cap B_r(x) ) )}{\lambda(B_r(x))} \geq \limsup_{r \searrow 0} \frac{\lambda( F \cap B_r(x) ) }{\lambda(B_r(x))}.
			\end{align*}
			Moreover,
			\begin{align*}
			\frac{1}{2} = \lim_{r \searrow 0}  \frac{\lambda((E \cup F) \cap B_r(x) ) )}{\lambda(B_r(x))} \leq \liminf_{r \searrow 0} \frac{\lambda( E \cap B_r(x) ) }{\lambda(B_r(x))} + \frac{\lambda( F \cap B_r(x) ) }{\lambda(B_r(x))}
			\end{align*}
			and thus
			\begin{align*}
			\frac{\lambda( E \cap B_r(x) ) }{\lambda(B_r(x))} \not \to 0 \quad \text{ or } \quad \frac{\lambda( F \cap B_r(x) ) }{\lambda(B_r(x))} \not \to 0 \quad \text{ as } r \searrow 0.
			\end{align*}
			Therefore, $x \in \partial^e E \cup \partial^e F$. To prove $\partial^* (E \setminus F) \subset \partial^e E \cup \partial^e F$, we use that
			\begin{align*}
			\R^d \setminus (E \setminus F) = (\R^d \setminus E) \cup F
			\end{align*}
			and obtain
			\begin{align*}
			\partial^*( E \setminus F ) = \partial^*(\R^d \setminus (E \setminus F)) = \partial^* ( (\R^d \setminus E ) \cup F ) \subset \partial^e (\R^d \setminus E) \cup \partial^e F = \partial^e E \cup \partial^e F.
			\end{align*}
			The last claim follows  from the monotonicity of $\Ha^{d-1}$ and \cite[Thm. 3.61]{ambrosio2000functions}.
	\end{proof}
	
	\section{Discretized Trust-region Subproblems as Linear Integer Programs}\label{sec:subproblems_as_ips}
	
		We briefly summarize how the trust-region subproblems yield
		integer linear programs for a fixed control discretization, see also
		\cite[Section 3.3]{leyffer2021sequential}.
		To this end, we fix a partition $\mathcal{P}$ of $\Omega$ into finitely many
		polytopes of dimension $d$. We denote the set of interior facets by
		$\mathcal{E} \subset \mathcal{P}\times \mathcal{P}$.
		The piecewise constant functions on this partition that attain values in $V$
		can be written as $v(x) = \sum_{P\in\mathcal{P}} v_P \chi_{P}(x)$
		for a.a.\ $x \in \Omega$ with $v_P \in V$. Moreover, for
		$E = (P_a, P_b) \in \mathcal{E}$, we write
		$[v]_E = v_{P_a} - v_{P_b}$. For such functions $v$, $\bar{v}$
		and $g \in L^2(\Omega)$
		we can rewrite the terms in the trust-region subproblems \eqref{eq:tr}
		as:
		\begin{align*}
		\TV(v) &=
		\sum_{E \in \mathcal{E}} \int_E \left|[v]_E\right|\dd \Ha^{d-1}
		= \sum_{E \in\mathcal{E}} \Ha^{d-1}(E) \left|[v]_{E}\right|,\\
		(g, v - \bar{v})_{L^2(\Omega)}
		&= \sum_{P \in \mathcal{P}}(v_P - \bar{v}_P) \underbrace{\int_{P} g(x) \dd x}_{\eqqcolon c_P},\\
		\|v - \bar{v}\|_{L^1(\Omega)} &= \sum_{P\in\mathcal{P}} |v_P - \bar{v}_P| \lambda(P).
		\end{align*}
		By means of auxiliary variables, we can transform the absolute values into
		linear inequalities, which gives the resulting integer linear program formulation:
		\begin{gather*}
		(\text{TR-as-IP}(\bar{v},g,\Delta))
		\coloneqq
		\left\{
		\begin{aligned}
		\min_{v_P,u_P,w_E,\tau}\ &
		\sum_{P \in \mathcal{P}} c_P (v_P - \bar{v}_{P})
		+ \alpha \tau - \alpha \TV(\bar{v}) \\
		\text{~~s.t.~~~~}\ & -u_P \le v_P - \bar{v}_P\le u_P \text{ for all }
		P\in\mathcal{P}, \\
		& {\sum_{P \in \mathcal{P}}} u_P\lambda(P)
		\le \Delta, \\
		& -w_E \le v_{P_a} - v_{P_b} \le w_E
		\text{ for all } E = (P_a,P_b) \in \mathcal{E}, \\
		& {\sum_{E \in \mathcal{E}}} w_E \Ha^{d-1}(E) \le \omega, \\
		& v_P \in \{\nu_1,\ldots,\nu_M\}
		\text{ for all } P \in\mathcal{P}.
		\end{aligned}
		\right.
		\end{gather*}

	\bibliographystyle{plain}
	\bibliography{biblio}

\begin{thebibliography}{10}

\bibitem{allaire2001shape}
G.~Allaire.
\newblock {\em Shape Optimization by the Homogenization Method}, volume 146 of
  {\em Applied Mathematical Sciences}.
\newblock Springer Science \& Business Media, 2001.

\bibitem{ambrosio2000functions}
L.~Ambrosio, N.~Fusco, and D.~Pallara.
\newblock {\em Functions of bounded variation and free discontinuity problems},
  volume 254 of {\em Oxford Mathematical Monographs}.
\newblock Clarendon Press Oxford, 2000.

\bibitem{bestehorn2021mixed}
F.~Bestehorn, C.~Hansknecht, C.~Kirches, and P.~Manns.
\newblock Mixed-integer optimal control problems with switching costs: a
  shortest path approach.
\newblock {\em Mathematical Programming}, 188(2):621--652, 2021.

\bibitem{burger2012exact}
M.~Burger, Y.~Dong, and M.~Hinterm{\"u}ller.
\newblock Exact relaxation for classes of minimization problems with binary
  constraints.
\newblock {\em arXiv preprint arXiv:1210.7507}, 2012.

\bibitem{chambolle2010introduction}
A.~Chambolle, V.~Caselles, D.~Cremers, M.~Novaga, and T.~Pock.
\newblock An introduction to total variation for image analysis.
\newblock In {\em Theoretical foundations and numerical methods for sparse
  recovery}, pages 263--340. de Gruyter, 2010.

\bibitem{chambolle1997image}
A.~Chambolle and P.-L. Lions.
\newblock Image recovery via total variation minimization and related problems.
\newblock {\em Numerische Mathematik}, 76(2):167--188, 1997.

\bibitem{clason2014multi}
C.~Clason and K.~Kunisch.
\newblock Multi-bang control of elliptic systems.
\newblock In {\em Annales de l'Institut Henri Poincar\'{e} (c) Analys\'{e} Non
  Lin\'{e}aire}, volume~31, pages 1109--1130. Elsevier, 2014.

\bibitem{clason2018vector}
C.~Clason, C.~Tameling, and B.~Wirth.
\newblock Vector-valued multibang control of differential equations.
\newblock {\em SIAM Journal on Control and Optimization}, 56(3):2295--2326,
  2018.

\bibitem{de2020sparse}
A.~De~Marchi and M.~Gerdts.
\newblock Sparse switching times optimization and a sweeping hessian proximal
  method.
\newblock In {\em Operations Research Proceedings 2019}, pages 89--95.
  Springer, 2020.

\bibitem{dobson1996analysis}
D.~Dobson and O.~Scherzer.
\newblock Analysis of regularized total variation penalty methods for
  denoising.
\newblock {\em Inverse Problems}, 12(5):601, 1996.

\bibitem{engel2021optimal}
S.~Engel, B.~Vexler, and P.~Trautmann.
\newblock Optimal finite element error estimates for an optimal control problem
  governed by the wave equation with controls of bounded variation.
\newblock {\em IMA Journal of Numerical Analysis}, 41(4):2639--2667, 2021.

\bibitem{gerdts2005solving}
M.~Gerdts.
\newblock {S}olving mixed-integer optimal control problems by
  {B}ranch\&{B}ound: {A} case study from automobile test-driving with gear
  shift.
\newblock {\em {O}ptimal {C}ontrol {A}pplications and {M}ethods}, 26:1--18,
  2005.

\bibitem{gottlich2019partial}
S.~G{\"o}ttlich, A.~Potschka, and C.~Teuber.
\newblock A partial outer convexification approach to control transmission
  lines.
\newblock {\em Computational Optimization and Applications}, 72(2):431--456,
  2019.

\bibitem{goettlich2017partial}
S.~G\"{o}ttlich, A.~Potschka, and U.~Ziegler.
\newblock Partial outer convexification for traffic light optimization in road
  networks.
\newblock {\em SIAM Journal on Scientific Computing}, 39(1):B53--B75, 2017.

\bibitem{gurobi}
{Gurobi Optimization, LLC}.
\newblock {Gurobi Optimizer Reference Manual}, 2022.

\bibitem{hafemeyer2022path}
D.~Hafemeyer and F.~Mannel.
\newblock A path-following inexact newton method for pde-constrained optimal
  control in bv.
\newblock {\em Computational Optimization and Applications}, pages 1--42, 2022.

\bibitem{hahn2022binary}
M.~Hahn, S.~Leyffer, and S.~Sager.
\newblock Binary optimal control by trust-region steepest descent.
\newblock {\em Mathematical Programming}, pages 1--44, 2022.

\bibitem{hante2021time}
F.~M. Hante, R.~Krug, and M.~Schmidt.
\newblock Time-domain decomposition for mixed-integer optimal control problems.
\newblock {\em Optimization Online preprint 2021/08/8550}, 2021.

\bibitem{hante2017challenges}
F.~M. Hante, G.~Leugering, A.~Martin, L.~Schewe, and M.~Schmidt.
\newblock Challenges in optimal control problems for gas and fluid flow in
  networks of pipes and canals: From modeling to industrial applications.
\newblock In {\em Industrial mathematics and complex systems}, pages 77--122.
  Springer, 2017.

\bibitem{hante2013relaxation}
F.~M. Hante and S.~Sager.
\newblock Relaxation methods for mixed-integer optimal control of partial
  differential equations.
\newblock {\em Computational Optimization and Applications}, 55(1):197--225,
  2013.

\bibitem{hintermuller2017optimal}
M.~Hinterm{\"u}ller and C.~N. Rautenberg.
\newblock Optimal selection of the regularization function in a weighted total
  variation model. part i: Modelling and theory.
\newblock {\em Journal of Mathematical Imaging and Vision}, 59(3):498--514,
  2017.

\bibitem{kaya2020optimal}
C.~Y. Kaya.
\newblock Optimal control of the double integrator with minimum total
  variation.
\newblock {\em Journal of Optimization Theory and Applications}, 185:966--981,
  2020.

\bibitem{kirches2020compactness}
C.~Kirches, P.~Manns, and S.~Ulbrich.
\newblock Compactness and convergence rates in the combinatorial integral
  approximation decomposition.
\newblock {\em Mathematical Programming}, 2020.

\bibitem{lellmann2014imaging}
J.~Lellmann, D.~A. Lorenz, C.-B. Sch{\"o}nlieb, and T.~Valkonen.
\newblock Imaging with {K}antorovich--{R}ubinstein discrepancy.
\newblock {\em SIAM Journal on Imaging Sciences}, 7(4):2833--2859, 2014.

\bibitem{leyffer2021sequential}
S.~Leyffer and P.~Manns.
\newblock Sequential linear integer programming for integer optimal control
  with total variation regularization.
\newblock {\em arXiv preprint arXiv:2106.13453}, 2021.

\bibitem{lindenstrauss1966short}
J.~Lindenstrauss.
\newblock A short proof of {L}iapounoff's convexity theorem.
\newblock {\em Journal of Mathematics and Mechanics}, 15(6):971--972, 1966.

\bibitem{loxton2012control}
R.~Loxton, Q.~Lin, V.~Rehbock, and K.~L. Teo.
\newblock Control parameterization for optimal control problems with continuous
  inequality constraints: New convergence results.
\newblock {\em Numerical Algebra, Control and Optimization}, 2(3):571--599,
  2012.

\bibitem{lyapunov1940completely}
A.~A. Lyapunov.
\newblock On completely additive vector functions.
\newblock {\em Izv. Akad. Nauk SSSR}, 4:465--478, 1940.

\bibitem{maggi2012sets}
F.~Maggi.
\newblock {\em Sets of Finite Perimeter and Geometric Variational Problems: An
  Introduction to Geometric Measure Theory}.
\newblock Number 135. Cambridge University Press, 2012.

\bibitem{manns2021relaxed}
P.~Manns.
\newblock Relaxed multibang regularization for the combinatorial integral
  approximation.
\newblock {\em SIAM Journal on Control and Optimization}, 59(4):2645--2668,
  2021.

\bibitem{manns2022on}
P.~Manns, M.~Hahn, C.~Kirches, S.~Leyffer, and S.~Sager.
\newblock On structural similarities of combinatorial integral approximation
  and binary trust-region steepest descent.
\newblock {\em arXiv preprint,arXiv:2202.07934}, 2022.

\bibitem{manns2020multidimensional}
P.~Manns and C.~Kirches.
\newblock Multidimensional sum-up rounding for elliptic control systems.
\newblock {\em SIAM Journal on Numerical Analysis}, 58(6):3427--3447, 2020.

\bibitem{marko2022integer}
J.~Marko and G.~Wachsmuth.
\newblock Integer optimal control problems with total variation regularization:
  Optimality conditions and fast solution of subproblems.
\newblock {\em arXiv preprint arXiv:2207.05503}, 2022.

\bibitem{martin2006mixed}
A.~Martin, M.~M{\"o}ller, and S.~Moritz.
\newblock Mixed integer models for the stationary case of gas network
  optimization.
\newblock {\em Mathematical Programming}, 105(2):563--582, 2006.

\bibitem{maurer2004second}
H.~Maurer and N.~P. Osmolovskii.
\newblock Second order sufficient conditions for time-optimal bang-bang
  control.
\newblock {\em SIAM Journal on Control and Optimization}, 42(6):2239--2263,
  2004.

\bibitem{tao}
T.~Munson, J.~Sarich, S.~Wild, S.~Benson, and L.C. McInnes.
\newblock {TAO 3.5 Users Manual}.
\newblock Technical report, Argonne National Laboratory, Mathematics and
  Computer Science Division, 2015.
\newblock Technical Report ANL/MCS-TM-322.

\bibitem{newby2015trust}
E.~Newby and M.~M. Ali.
\newblock A trust-region-based derivative free algorithm for mixed integer
  programming.
\newblock {\em Computational Optimization and Applications}, 60(1):199--229,
  2015.

\bibitem{rathgeber2016firedrake}
F.~Rathgeber, D.A. Ham, L.~Mitchell, Michael Lange, Fabio Luporini, Andrew~TT
  McRae, Gheorghe-Teodor Bercea, Graham~R Markall, and Paul~HJ Kelly.
\newblock Firedrake: automating the finite element method by composing
  abstractions.
\newblock {\em ACM Transactions on Mathematical Software (TOMS)}, 43(3):1--27,
  2016.

\bibitem{rudin1992nonlinear}
L.~I. Rudin, S.~Osher, and E.~Fatemi.
\newblock Nonlinear total variation based noise removal algorithms.
\newblock {\em Physica D: Nonlinear Phenomena}, 60(1-4):259--268, 1992.

\bibitem{ruffler2016optimal}
F.~R{\"u}ffler and F.~M. Hante.
\newblock Optimal switching for hybrid semilinear evolutions.
\newblock {\em Nonlinear Analysis: Hybrid Systems}, 22:215--227, 2016.

\bibitem{sager2012integer}
S.~Sager, H.~G. Bock, and M.~Diehl.
\newblock {T}he {I}nteger {A}pproximation {E}rror in {M}ixed-{I}nteger
  {O}ptimal {C}ontrol.
\newblock {\em Mathematical Programming}, 133(1--2):1--23, 2012.

\bibitem{sager2011combinatorial}
S.~Sager, M.~Jung, and C.~Kirches.
\newblock {C}ombinatorial integral approximation.
\newblock {\em {M}athematical {M}ethods of {O}perations {R}esearch},
  73(3):363--380, 2011.

\bibitem{sager2021mixed}
S.~Sager and C.~Zeile.
\newblock On mixed-integer optimal control with constrained total variation of
  the integer control.
\newblock {\em Computational Optimization and Applications}, 78(2):575--623,
  2021.

\bibitem{severitt2022efficient}
M.~Severitt and P.~Manns.
\newblock Efficient solution of discrete subproblems arising in integer optimal
  control with total variation regularization.
\newblock {\em arXiv preprint arXiv:2206.01642}, 2022.

\bibitem{sigmund2013topology}
O.~Sigmund and K.~Maute.
\newblock Topology optimization approaches.
\newblock {\em Structural and Multidisciplinary Optimization},
  48(6):1031--1055, 2013.

\bibitem{stellato2017second}
B.~Stellato, S.~Ober-Bl{\"o}baum, and P.~J. Goulart.
\newblock Second-order switching time optimization for switched dynamical
  systems.
\newblock {\em IEEE Transactions on Automatic Control}, 62(10):5407--5414,
  2017.

\bibitem{tartar1979compensated}
L.~Tartar.
\newblock Compensated compactness and applications to partial differential
  equations.
\newblock In {\em Nonlinear Analysis and Mechanics: {H}eriot-{W}att symposium},
  volume~4, pages 136--212, 1979.

\bibitem{vogel1996iterative}
C.~R. Vogel and M.~E. Oman.
\newblock Iterative methods for total variation denoising.
\newblock {\em SIAM Journal on Scientific Computing}, 17(1):227--238, 1996.

\bibitem{vogt2022mixed}
R.~H. Vogt, S.~Leyffer, and T.~S. Munson.
\newblock A mixed-integer pde-constrained optimization formulation for
  electromagnetic cloaking.
\newblock {\em SIAM Journal on Scientific Computing}, 44(1):B29--B50, 2022.

\end{thebibliography}
	
\end{document}